\newtheorem{cond}{Condition}
\newtheorem{lemma}{Lemma}
\newtheorem{prop}{Proposition}
\newtheorem{thm}{Theorem}
\newtheorem{cor}{Corollary}
\newtheorem*{result}{Theorem}
\theoremstyle{definition}
\newtheorem{defn}{Definition}
\theoremstyle{remark}
\newtheorem{rem}{Remark}
\newtheorem{ex}{Example}
\newtheorem*{exs}{Examples}
\newcounter{numl}
\newcommand{\labelnuml}{\textup{(\roman{numl})}}
\newenvironment{numlist}{\begin{list}{\labelnuml}%
{\usecounter{numl}\setlength{\leftmargin}{0pt}%
\setlength{\itemindent}{2\parindent}%
\setlength{\itemsep}{\smallskipamount}\def
\makelabel ##1{\hss \llap {\upshape ##1}}}}{\end{list}}
\newenvironment{bulletlist}{\begin{list}{\labelitemi}%
{\setlength{\leftmargin}{\parindent}\def
\makelabel ##1{\hss \llap {\upshape ##1}}}}{\end{list}}
\DeclareSymbolFont{script}{U}{eus}{m}{n}
\DeclareSymbolFontAlphabet{\mathscr}{script}
\DeclareMathSymbol{\Wedge}{0}{script}{"5E}
\DeclareMathAlphabet{\mathrmsl}{OT1}{cmr}{m}{sl}
\newcommand{\R}{{\mathbb R}}
\newcommand{\C}{{\mathbb C}}
\newcommand{\Z}{{\mathbb Z}}
\newcommand{\N}{{\mathbb N}}
\newcommand{\Q}{{\mathbb Q}}
\newcommand{\T}{{\mathbb T}}
\newcommand{\cH}{{\mathcal H}}
\newcommand{\cC}{{\mathcal C}}
\newcommand{\cF}{{\mathcal F}}
\newcommand{\cL}{{\mathcal L}}
\newcommand{\cO}{{\mathcal O}}
\newcommand{\sub}{\subseteq}
\newcommand{\trace}{\mathop{\mathrm{tr}}\nolimits}
\newcommand{\g}{\mathfrak g}
\newcommand{\tor}{{\mathfrak t}}
\newcommand{\torh}{{\mathfrak h}}
\newcommand{\Fa}{F}
\newcommand{\Lab}{{\mathbf L}}
\renewcommand{\d}{{\mathrmsl d}}
\newcommand{\s}[1]{{\mathit s}_{#1}}
\newcommand{\Hess}{\mathop{\mathrm{Hess}}}
\newcommand{\eps}{\varepsilon}
\newcommand{\transp}{^\top}
\newcommand{\ip}[1]{\langle #1 \rangle}
\newcommand{\spn}[1]{\mathopen< #1\mathclose>}
\newcommand{\spns}{\mathrm{span}}
\newcommand{\del}{\partial}
\newcommand{\ang}{\boldsymbol t}
\newcommand{\taumap}{\boldsymbol\tau}
\newcommand{\bx}{\boldsymbol x}
\newcommand{\by}{\boldsymbol y}
\newcommand{\bz}{\boldsymbol z}
\newcommand{\Proj}{\mathrm P}
\newcommand{\eaf}{\zeta}
\newcommand{\w}[1]{\sigma^{(#1)}}
\newcommand{\p}[1]{p^{(#1)}}
\newcommand{\pl}[1]{\lambda^{(#1)}}
\newcommand{\pr}[1]{\rho^{(#1)}}
\newcommand{\me}{\lambda}
\newcommand{\al}{\alpha}
\newcommand{\be}{\beta}
\newcommand{\ga}{\gamma}
\newcommand{\alvec}{\boldsymbol\alpha}
\newcommand{\bevec}{\boldsymbol\beta}
\newcommand{\gavec}{\boldsymbol\gamma}
\newcommand{\thvec}{\boldsymbol\theta}
\newcommand{\bp}{\boldsymbol\pi}
\newcommand{\bs}{\boldsymbol s}
\newcommand{\bt}{\boldsymbol t}
\newcommand{\dd}{\eps}
\newcommand{\ad}{\eta}
\newcommand{\wt}{w}
\newcommand{\1}{0}
\newcommand{\2}{\infty}
\newcommand{\fs}{\varphi}
\begin{document}
\strut\vspace{-1cm}
\title[Ambitoric geometry II]
{Ambitoric geometry II:\\ Extremal toric surfaces and Einstein $4$-orbifolds}
\author[V. Apostolov]{Vestislav Apostolov}
\address{Vestislav Apostolov \\ D{\'e}partement de Math{\'e}matiques\\
UQAM\\ C.P. 8888 \\ Succursale Centre-ville \\ Montr{\'e}al (Qu{\'e}bec) \\
H3C 3P8 \\ Canada}
\email{apostolov.vestislav@uqam.ca}
\author[D.M.J. Calderbank]{David M. J. Calderbank}
\address{David M. J. Calderbank \\ Department of Mathematical Sciences\\
University of Bath\\ Bath BA2 7AY\\ UK}
\email{D.M.J.Calderbank@bath.ac.uk}
\author[P. Gauduchon]{Paul Gauduchon}
\address{Paul Gauduchon \\ Centre de Math\'ematiques\\
{\'E}cole Polytechnique \\ UMR 7640 du CNRS\\ 91128 Palaiseau \\ France}
\email{pg@math.polytechnique.fr}
\date{February 2013}
\begin{abstract} We provide an explicit resolution of the existence problem
for extremal K\"ahler metrics on toric $4$-orbifolds $M$ with second Betti
number $b_2(M)=2$.  More precisely we show that $M$ admits such a metric if
and only if its rational Delzant polytope (which is a labelled quadrilateral)
is K-polystable in the relative, toric sense (as studied by S.~Donaldson,
E. Legendre, G. Sz\'ekelyhidi et al.). Furthermore, in this case, the extremal
K\"ahler metric is \emph{ambitoric}, i.e., compatible with a conformally
equivalent, oppositely oriented toric K\"ahler metric, which turns out to be
extremal as well. These results provide a computational test for the
K-stability of labelled quadrilaterals.

Extremal ambitoric structures were classified locally in Part I of this work,
but herein we only use the straightforward fact that explicit K\"ahler metrics
obtained there are extremal, and the identification of Bach-flat (conformally
Einstein) examples among them. Using our global results, the latter yield
countably infinite families of compact toric Bach-flat K\"ahler orbifolds,
including examples which are globally conformally Einstein, and examples which
are conformal to complete smooth Einstein metrics on an open subset, thus
extending the work of many authors.
\end{abstract}
\maketitle
\vspace{-0.4cm}

\section*{Introduction}

This paper concerns the explicit construction of extremal K\"ahler metrics on
compact $4$-orbifolds, including K\"ahler metrics which are conformally
Einstein (either globally or on the complement of real hypersurface).  The
examples we construct are toric with second Betti number two, i.e., their
rational Delzant polytope (which is the image of the momentum map of the
$2$-torus action~\cite{Delzant,LT})) is a quadrilateral. More precisely, we
use extremal \emph{ambitoric} metrics, which we classified locally in Part I
of this work, to resolve completely the existence problem in the quadrilateral
case.

There are several narratives to which this paper may be viewed as a
contribution. A general theme is the interplay between the abstract existence
theory for a geometric PDE, and the construction of explicit solutions
associated to special geometric structures.  Extremal K\"ahler metrics were
introduced by E.~Calabi~\cite{Cal1,Cal2} to address the problem of finding
canonical K\"ahler metrics with K\"ahler form in a given cohomology class
$\Omega$ on a compact complex manifold.  The ${\mathrm L}_2$ norm of the scalar
curvature yields a functional on $\Omega$, and its critical points are the
extremal metrics. They are thus natural generalizations of constant curvature
metrics on Riemann surfaces; in general, the Euler--Lagrange equation asserts
that a K\"ahler metric is extremal if its scalar curvature is hamiltonian for
a Killing vector field. As a geometric PDE, this is quasilinear of fourth
order, and no general methods are currently available.

Nevertheless, considerable progress on the existence theory has been made,
following the seminal work of Calabi~\cite{Cal0} on the non-positive
K\"ahler--Einstein case and the resolution of his famous conjecture by
T.~Aubin~\cite{Aubin} and S-T.~Yau~\cite{Yau0}.  Conjectures going back to
Yau~\cite{Yau1}, G.~Tian~\cite{Tian} and S.~Donaldson~\cite{Donaldson1} state
that the obstruction to the existence of an extremal K\"ahler metric in the
class $\Omega=2\pi c_1(\cL)$ of a polarized complex manifold $(M,\cL)$ should
be a purely algebro-geometric ``stability condition'' on the pair $(M,\cL)$,
and these conjectures may be extended to orbifolds~\cite{RT2}. Defining a
precise notion of stability is part of the problem, one candidate being
``K-(\emph{poly})\emph{stability}''~\cite{Tian,Donaldson1}: the necessity of
K-polystability has been proven for constant scalar curvature
metrics~\cite{Donaldson5,CT,Stoppa,Mabuchi}, and a version of K-polystability
relative to a maximal torus of the automorphism group of $(M,\cL)$, developed
by G.~Sz\'ekelyhidi~\cite{Sz1,Sz2}, is necessary for the existence of an
extremal K\"ahler metric of non-constant scalar curvature~\cite{Stoppa-Sz}.

A major difficulty with the theory is that in practice it is not only
difficult to determine whether a given polarized variety admits an extremal
K\"ahler metric---it is also difficult to verify a proposed stability
condition.  Consequently classes of complex manifolds or orbifolds for which
extremality and stability are more tractable play an important role. These
examples come in two main flavours: \emph{ruled} and \emph{toric}. Using a
construction due to Calabi~\cite{Cal1}, ruled surfaces and other projective
line bundles provide a setting for many explicit extremal K\"ahler
metrics~\cite{Christina1,Hwang-Singer,Sz1,ACGT}. We refer to these as metrics
\emph{of Calabi type}; they admit a hamiltonian $2$-form of order
one~\cite{ACG,ACG2}.  The extremality equations reduce to ODEs with explicit
polynomial solutions, and stability amounts to a positivity condition on the
solution~\cite{Sz1,ACGT2}.  For toric varieties, in contrast, the extremality
equations only reduce to a nonlinear fourth order PDE in the momenta; explicit
solutions are hard to find, but the existence theory is
well-developed~\cite{Donaldson1,Donaldson2} and there is a well-understood
notion of ``relative K-polystability with respect to toric degenerations''
which is widely believed to be equivalent to
existence~\cite{Donaldson1,Sz2,ZZ1,ZZ2}.  Explicit examples are largely
limited to orthotoric $2m$-orbifolds, which admit a hamiltonian $2$-form of
order $m$ and have a convex $m$-cube (or degeneration) for their rational
Delzant polytope~\cite{ACG,ACGT,Eveline}.

In dimension four, examples and theory come together to provide a moderately
complete picture. Extremal K\"ahler surfaces of Calabi type are locally toric
(as the base is a constant curvature Riemann surface) and there are specific
results for toric surfaces: for example, K-polystability implies uniform
K-polystability~\cite{Donaldson1,Sz2} and for constant scalar metrics,
existence is equivalent to K-polystability~\cite{Donaldson3,Donaldson6}.

Our paper is closely related to work of E.~Legendre~\cite{Eveline}, who
investigated systematically the extent to which explicit methods resolve the
existence problem when the rational Delzant polytope is a convex
quadrilateral.  Her solution highlighted the role of the \emph{extremal affine
  function} $\eaf$ on the rational Delzant polytope, a combinatorial invariant
which pulls back to the scalar curvature in the extremal case. Her main
results show that hamiltonian $2$-form methods suffice only for ``equipoised''
quadrilaterals, for which $\eaf$ has equal values at the midpoints of the
diagonals. A key ingredient in Legendre's work is the observation that $\eaf$
is linear in the inverse lengths of the normals. Using this, she resolved the
existence problem for the codimension one family of equipoised quadrilaterals
using orthotoric, Calabi type or product metrics.

The theory of hamiltonian $2$-forms in four dimensions~\cite{ACG} implies that
these toric metrics are in fact \emph{ambitoric}, i.e., toric with respect to
a pair of oppositely oriented but conformally equivalent K\"ahler metrics. The
local classification of ambitoric structures~\cite{ACG1} implies that the
``regular'' examples (i.e., neither a product nor of Calabi type) are
determined by a quadratic polynomial $q$ and two functions $A,B$ of one
variable. Regular ambitoric structures reduce to orthotoric metrics precisely
when $q$ has vanishing discriminant. The extremality conditions for regular
ambitoric structures can be explicitly solved with $A,B$ given by quartic
polynomials~\cite{ACG1}, and this generalization suffices to remove the
equipoisedness constraint introduced by Legendre.

To prove this, we use, in addition to ambitoric geometry, two further
ingredients.  The first is an analysis of rational Delzant quadrilaterals
building on~\cite{Eveline2}. We compute the extremal affine function $\eaf$
and establish a notion of ``temperateness'' for polystable quadrilaterals
which implies $\eaf$ is positive at the midpoints of the diagonals.

The second is the concept of a ``factorization structure'', which makes
precise the separation of variables technique that underpins explicit
solutions of geometric PDEs on toric $4$-orbifolds.  One can hope such an
approach will work in $2m$-dimensions when the rational Delzant polytope is a
convex $m$-cube (or degeneration), with the $2m$ facets providing boundary
conditions for the $m$ functions of one variable determining the solution. In
particular, by the uniqueness of toric extremal K\"ahler metrics~\cite{Guan},
we might \emph{expect} a rational Delzant polytope to select an essentially
unique adapted factorization structure for the solution. This is indeed what
happens for $m=2$.

The fruit of this analysis is Theorem~\ref{thm:stability}, which establishes,
for quadrilaterals, Donaldson's conjecture~\cite{Donaldson1} that the
existence of an extremal K\"ahler metric is equivalent to relative
K-polystability with respect to toric degenerations. Indeed, we show that
temperate quadrilaterals admit a factorization structure which relates the
polystability condition directly to the positivity of the quartics $A$ and $B$
appearing in the expression for the extremal ambitoric metrics. This
explicitly computable criterion yields new examples both of extremal toric
$4$-orbifolds, and of (unstable) toric $4$-orbifolds admitting no extremal
K\"ahler metric.

In our discussion of examples, we return to another motivation for ambitoric
geometry: K\"ahler metrics which are conformally Einstein. Since the work of
D.~Page~\cite{Page} and E.~Calabi~\cite{Cal1}, such metrics have been an
important source of examples, with contributions by
L.~B\'erard-Bergery~\cite{Berard-Bergery}, R.~Bryant~\cite{Bryant},
A.~Derdzinski~\cite{De}, G.~Maschler~\cite{DM}, and
C.~LeBrun~\cite{LeBrun1,LeBrun2a,LeBrun2b} among others. In part I of this
work~\cite{ACG1}, we classified locally $4$-dimensional Einstein metrics with
degenerate half-Weyl tensors using Bach-flat ambitoric structures (which are
extremal and locally conformally Einstein). Here we show that Bach-flat
ambitoric $4$-orbifolds are abundant, and include examples which are globally
conformally Einstein, as well as examples with an open set where the K\"ahler
metric is conformal to a smooth, complete (conformally compact) Einstein
metric on a covering. These extend in particular the examples of
R.~Bryant~\cite{Bryant}

\medbreak

The organization is as follows. In section~\ref{s:toric-orb} we review the
theory of compact toric K\"ahler
orbifolds~\cite{Abreu1,Delzant,Donaldson1,Guillemin,LT}, but adopting an
affine invariant viewpoint. We begin our analysis of quadrilaterals in
section~\ref{s:sq} where affine invariance provides an effective tool to
compute, for example, the extremal vector field, without extensive calculus.
By considering the affine structure as a variable, we similarly use projective
invariance to simplify our later discussion of factorization structures. This
approach is closely related to $5$-dimensional contact, CR and sasakian
geometry, cf.~\cite{Eveline2}, which we discuss in Appendix~\ref{s:CR}. The
main results are established in
sections~\ref{s:orbi-amb}--\ref{s:ext-orbi-amb} which concern the
compactification of ambitoric metrics in general, in terms of
\emph{factorization structures}, and extremal ambitoric metrics in particular
in terms of \emph{adapted factorization structures}. Examples, including the
new Einstein metrics, are given in section~\ref{s:examples}. In
Appendix~\ref{s:semistablity}, we study the K-semistability surface and show
that any quadrilateral which is not a parallelogram can be made K-unstable for
suitable choices of labels.

\medbreak

The first author was supported by an NSERC Discovery Grant and the second
(partially) by an Advanced Research Fellowship.  The authors are grateful to
Liana David and the Centro Georgi, Pisa, for an opportunity to meet in 2006,
and to the Simons Institute, Stony Brook, for a workshop invitation in
2011. They also thank Miguel Abreu, Hugues Auvray, Olivier Biquard, Claude
LeBrun, \'Eveline Legendre, Gabor Sz\'ekelyhidi and Christina T\o
nnessen-Friedman for helpful discussions and comments.

\section{Toric orbifolds, K\"ahler metrics and polystability}
\label{s:toric-orb}

We review the theory of toric K\"ahler $2m$-orbifolds $M$, primarily adopting
the symplectic point of view, as
in~\cite{Abreu1,Delzant,Guillemin,Guillemin-book,LT}. We denote the $m$-torus
acting on $M$ by $\T=\tor/2\pi\Lambda$, where $\tor$ is its (abelian) Lie
algebra, and $\Lambda$ its lattice of circle subgroups. Our applications have
$m=2$ and a geometry which may not be compatible with the lattice or origin.
We therefore use basis-independent and affine-invariant language.

\subsection{Toric symplectic orbifolds}\label{s:toric-symp}

Let $\torh$ be an $(m+1)$-dimensional real vector space, and $\iota\colon\R\to
\torh$ a $1$-dimensional subspace with quotient $\tor$.  Dually, $\torh^*$ has
$\tor^*$ as a subspace with quotient $\iota\transp\colon \torh^*\to\R$.  The
inverse image of $1\in\R$ under $\iota\transp$ is an affine subspace $\Xi$ of
$\torh^*$, modelled on $\tor^*$. The space of affine functions $f$ on $\Xi$ is
canonically isomorphic to $\torh$, where the constant functions are
$\iota(c)$, $c\in \R$, and the projection of $f$ to $\tor$, viewed as a linear
form on $\tor^*$, is its derivative $\d f$ (at every point of $\Xi$).

\begin{defn} Let $L_1,\ldots L_n$ be affine functions on $\Xi$ such
that the convex polytope
\begin{equation*}
\Delta:=\{\xi \in\Xi: L_j(\xi)\geq 0,\ j=1,\ldots n\}
\end{equation*}
is compact and nonempty.\footnote{We implicitly assume each $L_j$ vanishes
somewhere on $\Delta$, otherwise it may be discarded.} Then
$(\Delta,L_1,\ldots L_n)$ is a \emph{rational Delzant polytope} in $\Xi$ iff
\begin{numlist}
\item $\forall\,j\in\{1,\ldots n\}$ the \emph{normals} $u_j:=\d L_j\in\tor$
  belong to the lattice $\Lambda\subset\tor$ and
\item $\forall\xi\in\Delta$, $N_\xi:=\{u_j\in \tor: L_j(\xi)=0\}$ is linearly
independent in $\tor$.
\end{numlist}
\end{defn}
The term ``rational'' refers to the fact that the normals $u_j$ span an
$m$-dimensional vector space over $\Q$. If the \emph{affine normals} $L_j$
span an $(m+1)$-dimensional vector space over $\Q$, we say the polytope is
\emph{strongly rational}. The \emph{faces} $\Fa$ of $\Delta$ are intersections
of the \emph{facets} (codimension one faces) $\Fa_j=\Delta\cap\{\xi\in\Xi:
L_j(\xi)=0\}$ which have inward normals $u_j$. A rational Delzant polytope is
\emph{simple} or \emph{$m$-valent}: $m$ facets and $m$ edges meet at each
vertex. The primitive inward normals, which are uniquely determined by
$\Delta$ and $\Lambda$, have the form $u_j/m_j$ for some positive integer
labelling $m_j$ of the facets $\Fa_j$, so rational Delzant polytopes are also
called \emph{labelled polytopes}~\cite{LT}. It is convenient to encode the
labelling in the row vector $\Lab=(L_1,\ldots L_n)\in\mathrm{Hom}(\R^n,\torh)
\cong\torh\otimes \R^{n*}$ of affine normals, and denote the normals by
$\d\Lab=(u_1,\ldots u_n) \in \mathrm{Hom}(\R^n,\tor)$.

Compact toric symplectic $2m$-orbifolds are classified (up to equivariant
symplecto\-morphism) by rational Delzant polytopes (up to lattice preserving
affine equivalences)~\cite{Delzant,LT}. In one direction, given a toric
symplectic orbifold $M$, $\Delta$ is the image of the natural momentum map
$\mu\colon M\to\torh^*$, where $\torh$ is the vector space of hamiltonian
generators of $\T$, and $(\spns_\R u_j\cap \Lambda)/\spns_\Z u_j\cong
\Z/m_j\Z$ is the local uniformizing group of every point in
$\mu^{-1}(\Fa_j^0)$. (For any face $\Fa$, we denote by $\Fa^0$ its interior.)
Conversely, $(\Delta,\Lab)$ determines $(M,\omega)$ as a symplectic quotient
of $\C^n$ by an $(n-m)$-dimensional subgroup $G$ of the standard $n$-torus
$(S^1)^n=\R^n/2\pi\Z^n$: $G$ is the kernel of the map $(S^1)^n\to
\T=\tor/2\pi\Lambda$ induced by the natural map $\d\Lab\colon \R^n\to \tor$
(with kernel the Lie algebra $\g$ of $G$); the composite of $\Lab\transp\colon
\torh^*\to \R^{n*}$ with the transpose $\R^{n*}\to \g^*$ of the inclusion
therefore vanishes on $\tor^*$ and hence induces a map $\lambda\colon\R=
\torh^*/\tor^*\to\g^*$---the momentum level for the symplectic quotient of
$\C^n$ by $G$ is then $\lambda(1)$.

\begin{rem} Affine functions $L_j$ defining a rational Delzant polytope
$\Delta\subset\Xi$ do so with respect to \emph{any} lattice containing the
normals $u_j$. There is clearly a smallest such lattice
$\Lambda:=\spns_\Z\{u_j:j=1,\ldots n\}$, and any other such lattice $\Lambda'$
contains $\Lambda$ as a sublattice (of finite index).  The torus
$\T'=\tor/2\pi\Lambda'$ is the quotient of $\T=\tor/2\pi\Lambda$ by a finite
abelian group $\Gamma\cong \Lambda'/\Lambda$, and the corresponding toric
symplectic orbifolds $M$ and $M'$ (under the tori $\T$ and $\T'$) are related
by a regular orbifold covering~\cite{Thurston}: $M'=M/\Gamma$. In fact $M$ is
a \emph{simply connected orbifold} in the sense of W. Thurston~\cite{Thurston}
and is the universal orbifold cover of $M'$~\cite{LT}. We therefore say that
the rational Delzant polytope is \emph{simply connected}. Simply connected
rational Delzant polytopes are entirely determined by the affine normals
$\Lab$.
\end{rem}

\subsection{Toric K\"ahler orbifolds}\label{s:kahler-compact}

We next consider K\"ahler metrics compatible with a toric symplectic
structure.  On the union $M^0:=\mu^{-1}(\Delta^0)$ of the generic orbits, such
metrics have an explicit general expression due to
V.~Guillemin~\cite{Guillemin,Guillemin-book}. In this description, the
momentum map $\mu\colon M^0\to \Xi$ is supplemented by angular coordinates
$\ang\colon M\to \tor/2\pi\Lambda$ such that the kernel of $\d\ang$ is
orthogonal to the torus orbits. These action-angle coordinates $(\mu,\ang)$
identify each tangent space to $M^0$ with $\tor\oplus \tor^*$, and the
symplectic form is $\omega=\ip{\d\mu\wedge \d\ang}$, where $\ip{\cdot}$
denotes contraction of $\tor$ and $\tor^*$.  Hence invariant
$\omega$-compatible K\"ahler metrics on $M^0$ have the form
\begin{equation}\label{toricmetric}
g=\ip{\d\mu, {\mathbf G} , \d\mu}+ \ip{ \d\ang,{\mathbf H}, \d\ang},
\end{equation}
where ${\mathbf G}$ is a positive definite $S^2\tor$-valued function of $\mu$,
${\mathbf H}$ is its pointwise inverse in $S^2\tor^*$ (at each point,
${\mathbf G}$ and ${\mathbf H}$ define mutually inverse linear maps $\tor^*
\to\tor$ and $\tor\to\tor^*$) and $\ip{\cdot,\cdot,\cdot}$ denotes the
pointwise contraction $\tor^* \times S^2\tor \times \tor^* \to \R$ or the
dual contraction.  The corresponding almost complex structure is defined by
\begin{equation}\label{toricJ}
J \d\ang = -\ip{ {\mathbf G}, \d\mu},
\end{equation}
and $J$ is integrable if and only if ${\mathbf G}$ is the Hessian of a
function~\cite{Guillemin}.

Necessary and sufficient conditions for $\mathbf H$ to come from a globally
defined metric on $M$ are obtained in \cite{Abreu1,ACGT,Donaldson2}.  Here we
use the first order boundary conditions given in~\cite[\S1]{ACGT}.  In order
to state them, we denote by ${\tor}_\Fa \sub {\tor}$ (for any face $\Fa\sub
\Delta$) the vector subspace spanned by the inward normals $u_j \in {\tor}$ to
facets containing $\Fa$. Thus the tangent plane to points in $\Fa^0$ is the
annihilator ${\tor}^0_\Fa\cong({\tor}/{\tor}_\Fa)^*$ of ${\tor}_\Fa$ in
$\tor^*$.

\begin{prop}\label{p:toric-ccs} Let $(M,\omega)$ be a compact toric symplectic
$2m$-manifold or orbifold with natural momentum map $\mu \colon M\to \Delta
\subset \Xi\subset \torh^*$, and ${\mathbf H}$ be a positive definite
$S^2\tor^*$-valued function on $\Delta^0$.  Then ${\mathbf H}$ defines a
$\T$-invariant, $\omega$-compatible almost K\"ahler metric $g$ via
\eqref{toricmetric} if and only if it satisfies the following
conditions\textup:
\begin{bulletlist}
\item \textup{[smoothness]} ${\mathbf H}$ is the restriction to $\Delta^0$ of a
smooth $S^2\tor^*$-valued function on $\Delta$\textup;
\item \textup{[boundary values]} for any point $\xi$ on the facet $\Fa_j
\subset \Delta$ with inward normal $u_j$,
\begin{equation}\label{toricboundary}
{\mathbf H}_{\xi}(u_j, \cdot) =0\qquad {and}\qquad (d{\mathbf
H})_{\xi}(u_j,u_j) = 2 u_j,
\end{equation}
where the differential $d{\mathbf H}$ is viewed as a smooth $S^2\tor^*\otimes
{\tor}$-valued function on $\Delta$\textup;
\item \textup{[positivity]} for any point $\xi$ in interior of a face $\Fa
\sub \Delta$, ${\mathbf H}_{\xi}(\cdot, \cdot)$ is positive definite when
viewed as a smooth function with values in $S^2({\tor}/{\tor}_\Fa)^*$.
\end{bulletlist}
\end{prop}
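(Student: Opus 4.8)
The plan is to read the three conditions as the local description, near each stratum $\mu^{-1}(\Fa^0)$, of the single requirement that the tensor $g$ of \eqref{toricmetric}, a priori defined and smooth only on $M^0=\mu^{-1}(\Delta^0)$, extend to a smooth Riemannian metric on all of $M$. Since smoothness, the boundary relations \eqref{toricboundary} and positivity are all local on $\Delta$, and since every point of $M$ projects into the relative interior $\Fa^0$ of a unique face $\Fa\sub\Delta$, it suffices to analyse $g$ in an invariant neighbourhood of $\mu^{-1}(\Fa^0)$ for each face and then match the local data. For the open stratum (the case $\Fa=\Delta$) there is nothing to prove: any smooth positive-definite $\mathbf H$ with smooth inverse $\mathbf G$ yields, through \eqref{toricmetric}--\eqref{toricJ}, a smooth $\T$-invariant $\omega$-compatible metric on $M^0$, so the entire content lies in the behaviour across the lower-dimensional strata.

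First I would settle the model computation at a facet, which already exhibits both the boundary relations and the role of positivity. Fix $\xi\in\Fa_j^0$; here exactly the circle generated by the normal $u_j$ degenerates, while the residual $(m-1)$-torus acts (orbifold-)freely on $\mu^{-1}(\Fa_j^0)$. Passing to a local uniformizing chart I would use the equivariant Marle--Guillemin--Sternberg normal form to identify an invariant neighbourhood of the collapsing orbit with a product of a flat $\C$-factor, carrying the degenerating circle, and a toric model for the facet $\Fa_j$. On the $\C$-factor, with $\omega_0=\d x\wedge\d y$ and rotation momentum $L_j=\tfrac12 r^2$, a direct passage from polar coordinates $(r,\theta)$ to Cartesian coordinates $(x,y)$ shows that an invariant, $\omega_0$-compatible metric extends smoothly across $r=0$ precisely when $\mathbf H(u_j,\cdot)=0$ and $(\d\mathbf H)(u_j,u_j)=2u_j$ along $\Fa_j$, the diagonal part being the model relation $\mathbf H(u_j,u_j)=2L_j+O(L_j^2)$ and the off-diagonal part the vanishing of the cross terms supplied by the product structure. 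Crucially this normalization is independent of the label $m_j$: in the chart the momentum $L_j$ and the geometric (primitive) collapsing circle differ by exactly the factor that cancels, so the same first-order condition is read off for smooth manifolds and for orbifolds alike. Simultaneously, the transverse part of $\mathbf H$, descended to $S^2(\tor/\tor_{\Fa_j})^*$, is precisely the toric metric induced on the facet orbifold, and its positivity is the statement that this reduced metric is nondegenerate.

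Next I would propagate this to faces $\Fa$ of codimension $k$, where the $k$ circles generated by the normals $u_{j_1},\dots,u_{j_k}$ of the facets through $\Fa$ collapse simultaneously. The decisive input is the simplicity (Delzant) condition: these normals are linearly independent and, after passing to the uniformizing group $(\spns_\R u_{j}\cap\Lambda)/\spns_\Z u_{j}$, extend to a lattice basis, so that in the normal form the neighbourhood of $\mu^{-1}(\Fa^0)$ splits as a product of $k$ independent flat $\C$-factors with a toric model for $\Fa$. Because the collapsing directions decouple, the single-facet analysis applies to each factor separately: smoothness across the whole stratum is equivalent to the boundary relations \eqref{toricboundary} holding on each $\Fa_{j_i}$ together with positivity of the part of $\mathbf H$ transverse to $\tor_\Fa$, viewed in $S^2(\tor/\tor_\Fa)^*$, which is the metric on the lower-dimensional toric orbifold $\mu^{-1}(\Fa^0)$. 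Assembling these local equivalences over all faces, and using that $\mathbf H$ is a single globally smooth section on $\Delta$, yields both directions of the proposition.

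The step I expect to be the main obstacle is exactly this final assembly at the deep strata, in particular at the vertices, where one must be sure that conditions imposed only \emph{at the facets} (first order in the normal directions) already force full smoothness of $g$ where several circles collapse at once, and that the orbifold bookkeeping with the labels $m_j$ is consistent across all incident facets. Simplicity of the polytope is what makes this work, since it is what allows the collapsing $\C$-factors to be separated within a single equivariant chart; the care needed is therefore to verify that the normal form genuinely diagonalizes the degeneration and that no higher-order compatibility beyond \eqref{toricboundary} is secretly required. Once the product normal form is in hand, the remaining verifications reduce to the routine polar-to-Cartesian smoothness computation of the model case.
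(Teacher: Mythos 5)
You should first note that the paper itself does not prove this proposition: it is imported from the literature (\cite{Abreu1,ACGT,Donaldson2}, specifically the first order boundary conditions of \cite[\S1]{ACGT}), so your attempt can only be measured against the standard proof there. Your outline reproduces its architecture correctly — localization over the faces of $\Delta$, Delzant/equivariant local models, the polar-to-Cartesian computation on a flat $\C$-factor, and the observation that the labelled normal $u_j$ (rather than the primitive one) is what makes the condition $(d\mathbf{H})(u_j,u_j)=2u_j$ the right one in the uniformizing chart — and your one-facet model computation is right.

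The genuine gap is at the step you yourself flag: sufficiency on a stratum where $k\ge 2$ circles collapse. The reason you give for it — that simplicity of the polytope lets the collapsing $\C$-factors be ``separated within a single equivariant chart,'' so that the single-facet analysis applies factor by factor — is wrong as stated. Simplicity yields a \emph{symplectic} product chart, but the metric built from $\mathbf{H}$ does not respect that product: every entry of $\mathbf{H}$, and worse, of its inverse $\mathbf{G}$, couples the factors, so the decoupling is not a consequence of the normal form; it is precisely what must be proved, and it uses the boundary conditions from \emph{all} incident facets simultaneously. Concretely, writing $L_i$ for the affine normals of the $k$ facets through the face (as functions of the momenta), the boundary conditions together with Hadamard's lemma give $\mathbf{H}(u_i,u_i)=2L_i+L_i^2\,(\mathrm{smooth})$, $\mathbf{H}(u_i,u_j)=L_iL_j\,(\mathrm{smooth})$ for $i\ne j$, and $\mathbf{H}(u_i,v)=L_i\,(\mathrm{smooth})$ for $v$ tangent to the face; one must then invert the matrix and check that the determinant of the collapsing block is $\bigl(\prod_i 2L_i\bigr)\times(\mathrm{smooth,\ nonvanishing})$, whence $\mathbf{G}_{ii}=\tfrac{1}{2L_i}+(\mathrm{smooth})$ while all other entries of $\mathbf{G}$ are smooth, with the face block governed by your positivity condition. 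Only after these cancellations does $g$ split as $\sum_i\bigl(dr_i^2+r_i^2\,d\theta_i^2\bigr)$ plus terms that are manifestly smooth because they are expressible through $dL_i=x_i\,dx_i+y_i\,dy_i$ and $L_i\,d\theta_i=\tfrac12(x_i\,dy_i-y_i\,dx_i)$; the individual terms $\mathbf{G}_{ii}\,dL_i^2$ and $\mathbf{H}_{ii}\,d\theta_i^2$ are each singular in the chart, so there is no factor-by-factor reduction to the $k=1$ case. A smaller omission on the necessity side: to get that $\mathbf{H}$ extends smoothly \emph{to the boundary} of $\Delta$ you implicitly need that smooth $\T$-invariant functions are smooth functions of the momenta up to the boundary (e.g.\ smooth $S^1$-invariant functions on $\C$ are smooth in $r^2$); this is standard but should be invoked, since it is exactly what converts invariant smoothness on $M$ into smoothness on $\Delta$.
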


\subsection{The extremal affine function and K-polystability}\label{s:eaf-stab}

Let $(M,J,g,\omega)$ be a compact K\"ahler orbifold invariant under the action
of a maximal torus $G$ in the reduced automorphism group $H_0(M,J)$ of
$(M,J)$.  (By a result of Calabi~\cite{Cal1}, any extremal K\"ahler metric is
invariant under such a $G$.) Following \cite{FM}, the \emph{extremal potential}
is the ${\mathrm L}_2$-projection of the scalar curvature $\s{g}$ onto the
space of Killing potentials (with respect to $\omega$) of elements of the Lie
algebra $\g$ and the \emph{extremal vector field} is its symplectic
gradient. A. Futaki and T. Mabuchi~\cite{FM} show that the extremal vector
field is independent of the choice of a $G$-invariant K\"ahler metric within
the given K\"ahler class $[\omega]$ on $(M,J)$.  Since, the extremal vector
field is central, $G$ can also be taken to be a maximal compact
subgroup. Furthermore, by adopting the symplectic
viewpoint~\cite{Fujiki,Donaldson,Lejmi}, the extremal potential becomes a
natural deformation invariant of the complex structure, for fixed
$(M,\omega,G)$.

For toric symplectic orbifolds $(M,\omega,\T)$, the extremal potential is an
element $\eaf$ of $\torh$, called the \emph{extremal affine
function}~\cite{ACGT2,Guan,Lejmi}, and is defined (in the notation of
section~\ref{s:toric-symp}) by the following vector equation in $\torh^*$:
\begin{equation*}
\int_{\xi\in\Delta} \ip{\xi,\eaf}\xi \, \d\me = \int_{\xi\in\Delta}
\s{g}(\xi)\xi \,\d\me = 2\int_{\xi\in \del\Delta} \xi \,\d\nu
\end{equation*}
where $\d\me$ is a (constant) volume form on $\Xi$, the $(m-1)$-form $d\nu$
satisfies $u_j\wedge\d\nu=-\d\me$ on the facet $\Fa_j$ of $\del\Delta$ with
normal $u_j$, and $\s{g}$ is the scalar curvature of a compatible K\"ahler
metric viewed as a function on $\Delta$. The combinatorial boundary integral
for the first moment of $\s{g}$ is an application of the Abreu
formula~\cite{Abreu0}
\begin{equation}\label{abreu}
\s{g}=-\mathrm{div}\, \delta\mathbf{H}:=
-\sum_{r,s}\frac{\partial^2 H_{rs}}{\partial\xi_r\partial\xi_s}
\end{equation}
for the scalar curvature of the compatible metric defined by $\mathbf H$,
together with the divergence theorem; the latter calculation uses only the
boundary conditions of Proposition~\ref{p:toric-ccs}, and not the positive
definiteness of $\mathbf H$ on the faces of $\Delta$, nor the fact that
$\mathbf H$ is the inverse hessian of a symplectic potential. We deduce that
if $\mathbf H$ satisfies the boundary conditions and $-\mathrm{div}\,
\delta\mathbf{H}$ is an affine function, then this is the extremal affine
function; such an $\mathbf H$ is called a \emph{formal extremal solution}.

The extremal affine function is important not only as the scalar curvature of
a compatible extremal K\"ahler metric, but also because it may be used to
define a \emph{relative Futaki invariant} and hence a combinatorial
K-polystability criterion~\cite{Donaldson1,Donaldson3,Sz2,ZZ1}.

\begin{defn}\label{K-stability-def}
The \emph{relative Futaki invariant} $\cF_{\Delta,\Lab}$ of a compact toric
symplectic $2m$-orbifold $(M,\omega,\T)$ with rational Delzant polytope
$(\Delta,\Lab)$ is defined by
\begin{equation}\label{futaki}
\cF_{\Delta,\Lab}(f):=\int_{\xi\in \del\Delta} f(\xi) \d\nu
-\frac12 \int_{\xi\in\Delta} \ip{\xi,\eaf} f(\xi) \d\me
\end{equation}
for any continuous function $f$ on $\Delta$. Note that $\cF_{\Delta,\Lab}$
vanishes on affine functions $f$.
\end{defn}
Note that if $\mathbf H$ is a formal extremal solution, we may substitute
$\eaf=-\mathrm{div}\, \delta\mathbf{H}$ in this formula and integrate by
parts to obtain
\begin{equation}\label{Hfutaki}
\cF_{\Delta,\Lab}(f) = \frac12 \int_{\xi\in\Delta} \trace({\mathbf H} \, \Hess
f) \d\me.
\end{equation}

Let $\mathcal{PL}(\Delta)$ be the space of continuous piecewise-linear (PL)
convex functions $f$ on $\Delta$ (thus $f$ is the maximum of a finite
collection of affine linear functions). Although~\eqref{Hfutaki} involves two
derivatives of $f$, it may be used in a distributional sense to compute
$\cF_{\Delta,\Lab}(f)$ for $f\in \mathcal{PL}(\Delta)$. In particular
(cf.~\cite{Eveline}) let $f$ be a simple convex PL function with crease on the
line $\{\xi\in\Xi: \ip{\xi,u_f}=0 \}$ (with $u_f$ normalized to be the change
in $\d f$ along the line) and let $S_f$ be the intersection of this line with
$\Delta$. Then
\begin{equation}\label{Sfutaki}
\cF_{\Delta,\Lab}(f) = \int_{S_f} {\mathbf H}(u_f,u_f) \d\nu_f,
\end{equation}
where $\nu_f$ is the positive measure on $S_f$ such that $u_f \wedge \d\nu_f =
\d\me$.

\begin{defn} $(M,\omega,\T)$ is said to be (\emph{analytically, relatively})
\emph{K-polystable} (\emph{with respect to toric degenerations}) provided that
$\cF_{\Delta,\Lab}(f) \ge 0$ for all $f\in\mathcal{PL}(\Delta)$, with equality
iff $f$ is an affine function.
\end{defn}
The main conjecture of~\cite{Donaldson1} is that a compact toric orbifold
$(M,\omega,\T)$ admits a compatible extremal K\"ahler metric if and only if is
K-polystable in this toric sense. The forward implication has been established
by Zhou and Zhu~\cite{ZZ2}.  Conversely, in~\cite{Donaldson3}, Donaldson shows
that for polygons with zero extremal vector field, this toric K-polystability
criterion implies existence of a CSC metric.  The general extremal case
remains open, which motivates its study in the ambitoric context.

\section{Simplices and quadrilaterals}\label{s:sq}

Rational Delzant polytopes may be considered from a projective viewpoint, not
just an affine one. To fix notation, for a real vector space $V$, we denote by
$\Proj(V)=V^\times/\R^\times$ the nonzero vectors in $V$ up to scale:
$\Proj(V)$ is isomorphic to the set of $1$-dimensional subspaces of $V$, where
the equivalence class $[v]\in\Proj(V)$ of a nonzero vector $v\in V$ is mapped
to $\spn{v}\sub V$, its span.

Given a rational Delzant polytope $(\Delta,\Lab)$ we may identify
$\Delta\subset\Xi\subset\torh^*$ with its image in $\Proj(\torh^*)$ and also
with the convex cone in $\torh^*$ of its nonnegative multiples. Dually, the
space $\Delta^*\sub\torh$ of affine functions which are nonnegative on
$\Delta$ is a convex cone,\footnote{In fact $\Delta^*$ is a \emph{strictly}
  convex cone: it contains no nontrivial linear subspace.} given by the
nonnegative linear combinations of the affine normals $L_1,\ldots L_n$, which
we may identify with its image in $\Proj(\torh)$.  The incarnations of
$\Delta$ determine one another uniquely, but depend upon the choice of $\Xi$,
or equivalently, the inclusion $\iota\colon\R\to\torh$ or the \emph{affine
  structure} $\iota(1)\in\torh$; note that $\iota(1)$ is in the interior of
$\Delta^*$.

\subsection{Rational Delzant simplices}\label{s:simplex}

The case of $m$-simplices is well understood, but we summarize it briefly,
both as a warm-up, and because we shall use the case of triangles ($m=2$) as a
limiting case of quadrilaterals. All simplices are affine equivalent, so
simply connected rational Delzant simplices are parametrized by the choices
of scale for the normals. Concretely let $\Delta\subset\Xi$ be the $m$-simplex
on which $\ell_j(\xi)\geq 0$ for affine functions $\ell_0,\ell_1,\ldots
\ell_m$ with $\ell_0+\ell_1+\cdots+\ell_m=1$ on $\Xi$, so that each $\ell_j=1$
at the vertex $v_j$ opposite to the facet $\Fa_j$ on which it vanishes. Then
for any $r_0,r_1,\ldots r_m\in \R^+$ with rational ratios, affine normals
$L_j:=\ell_j/r_j$ define a rational Delzant simplex $(\Delta,\Lab)$ with
$\Lab=(L_0,L_1,\ldots L_m)$ and $\sum_{j=0}^m r_j L_j=1$.

The corresponding symplectic orbifolds are \emph{weighted projective spaces}:
the vector $(r_0,r_1,\ldots r_m)$ spans the kernel of the map $\d
\Lab\colon\R^{m+1}\to \tor$ sending $(x_0,x_1,\ldots x_m)$ to $\sum_{j=0}^m
x_j u_j$ (where $u_j=\d L_j$), and some multiple $(\wt_0,\wt_1,\ldots \wt_m)$
of $(r_0,r_1,\ldots r_m)$ is a list of positive integers with no common
multiple; the rational Delzant construction therefore yields the weighted
projective space $\C P^m_{\wt_0,\ldots \wt_m}$ as the symplectic quotient of
$\C^{m+1}$ by the diagonal action of $S^1$ with weights $\wt_0,\ldots
\wt_m$. Each weighted projective space has a unique K\"ahler class (up to
scale) containing a unique extremal K\"ahler metric (up to homothety and
biholomorphism), and this metric is Bochner-flat~\cite{Bryant}. In four
dimensions, a K\"ahler metric is Bochner-flat iff it is selfdual ($W_-=0$)
which is equivalent to the existence of many (local) opposite complex
structures, although none of these are globally defined on a weighted
projective plane.

The toric geometry of weighted projective spaces (and their quotients) has
been worked out in detail by M. Abreu~\cite{Abreu1} in specific coordinates.
Here we give an affine invariant derivation, as we shall use similar ideas to
simplify computation in the more complicated case of quadrilaterals.

\begin{lemma}\label{l:simplicial-integral} Let $\Delta$ be an $m$-simplex in
$\Xi$ as above, let $\me$ be a translation invariant measure on $\Xi$, and
$A_1,A_2\colon \Xi\to \R$ be affine functions whose values on the vertices
$v_0,v_1,\ldots v_m$ of $\Delta$ are given by $a_1,a_2\in \R^{m+1}$. Then
\begin{equation*}
\int_\Delta A_1 A_2 \,\d\me = B(a_1,a_2) \me(\Delta)
\end{equation*}
where $B$ is the symmetric bilinear form on $\R^{m+1}$ with $B_{jk}=
\frac{1+\delta_{jk}}{(m+1)(m+2)}$.
\end{lemma}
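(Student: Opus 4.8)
The plan is to reduce the integral to the \emph{barycentric coordinate functions} $\ell_0,\ldots,\ell_m$ of $\Delta$ introduced just above, i.e.\ the affine functions on $\Xi$ determined by $\ell_j(v_k)=\delta_{jk}$ (equivalently $\sum_j\ell_j\equiv 1$). Since the $m+1$ vertices of a nondegenerate $m$-simplex form an affine frame for $\Xi$, every affine function is determined by its vertex values, so $A_i=\sum_{j=0}^m (a_i)_j\,\ell_j$ for $i=1,2$, where $(a_i)_j=A_i(v_j)$. Expanding the product and using linearity of the integral,
\begin{equation*}
\int_\Delta A_1 A_2\,\d\me=\sum_{j,k}(a_1)_j(a_2)_k\,M_{jk},\qquad M_{jk}:=\int_\Delta \ell_j\ell_k\,\d\me,
\end{equation*}
so that the lemma is equivalent to the assertion $M_{jk}=B_{jk}\,\me(\Delta)$.

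To evaluate this ``Gram matrix'' $M$, I would first note that the normalised quantity $M_{jk}/\me(\Delta)$ is an affine invariant: under an affine isomorphism the translation-invariant measure $\me$ acquires a \emph{constant} Jacobian factor, which cancels in the ratio, while the functions $\ell_j$ are intrinsic to the simplex. I may therefore compute on any convenient model, e.g.\ the standard simplex $\{x\in\R^m: x_i\ge 0,\ \sum_i x_i\le 1\}$ with Lebesgue measure, where $\me(\Delta)=1/m!$ and $\ell_0=1-\sum_i x_i$, $\ell_i=x_i$. The permutation symmetry of the simplex forces $M_{jj}=p\,\me(\Delta)$ and $M_{jk}=q\,\me(\Delta)$ $(j\ne k)$ for scalars $p,q$ independent of the indices, and integrating $\ell_j=\ell_j\sum_k\ell_k$ yields the single relation $p+mq=\tfrac{1}{m+1}$ (the right-hand side being the $j$-th barycentric coordinate of the centroid). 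One genuine computation then remains, namely $p\,\me(\Delta)=\int_\Delta\ell_0^2\,\d\me$: the slicing identity $\int f(\sum_i x_i)\,dx=\int_0^1 f(t)\,\tfrac{t^{m-1}}{(m-1)!}\,dt$ over the standard simplex reduces it to a Beta integral, giving $\int_\Delta\ell_0^2\,\d\me=2/(m+2)!$ and hence $p=\tfrac{2}{(m+1)(m+2)}$; the relation above then forces $q=\tfrac{1}{(m+1)(m+2)}$.

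Combining these, $M_{jk}=\tfrac{1+\delta_{jk}}{(m+1)(m+2)}\,\me(\Delta)=B_{jk}\,\me(\Delta)$, which substituted into the displayed expansion produces $\int_\Delta A_1A_2\,\d\me=B(a_1,a_2)\,\me(\Delta)$, as claimed. I expect no serious obstacle here: the argument is entirely formal apart from a single elementary integral, and the only points demanding care are \textup{(i)} the legitimacy of the affine-invariance reduction, which hinges precisely on $\me$ being translation invariant so that its transformation factor is constant, and \textup{(ii)} correct bookkeeping of the normalisation $\me(\Delta)=1/m!$ on the model simplex. Alternatively one can bypass the symmetry step by invoking the standard Dirichlet integral $\int_\Delta\ell_0^{n_0}\cdots\ell_m^{n_m}\,\d\me=\me(\Delta)\,m!\,\prod_j n_j!\,/\,(m+\sum_j n_j)!$ directly, with $(n_0,\ldots,n_m)$ equal to $(2,0,\ldots,0)$ or $(1,1,0,\ldots,0)$, reading off both values of $M_{jk}$ at once.
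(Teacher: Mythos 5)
Your proof is correct and follows essentially the same strategy as the paper's: reduce by permutation symmetry to two unknown constants, extract one linear relation from $\sum_j \ell_j = 1$, and pin down the remaining constant by the single nontrivial computation of $\int_\Delta \ell_0^2\,\d\me$. The only difference is in executing that last step: the paper evaluates the integral intrinsically via the layer-cake identity $\me(\{p\in\Delta:\ell_0(p)^2\geq x\})=(1-\sqrt x)^m\me(\Delta)$, whereas you pass to the standard simplex with Lebesgue measure and use a slicing/Beta (Dirichlet) integral --- both routes give $2\me(\Delta)/(m+1)(m+2)$ and the same conclusion.
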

\begin{proof} Since all $m$-simplices are affine equivalent, and any affine
function is uniquely determined by its values on the $m+1$ vertices of a
simplex, the integral must have the given form for some symmetric bilinear
form $B$.  The entries of $B$ must be permutation invariant, so
$B_{jk}=a+b\delta_{jk}$.  Substituting $A_1=A_2=1$, we obtain
$(m+1)^2a+(m+1)b=1$. If $A_1=A_2=\ell_0$ (i.e., equal to $1$ at $v_0$ and $0$
at $v_1,\ldots v_m$), then we observe that for $0\leq x\leq 1$, $\me(\{p\in
\Delta:\ell_0(p)^2\geq x\})= \me(\{p\in \Delta:1-\ell_0(p)\leq 1-\sqrt
x\})=\me\bigl((1-\sqrt{x})\Delta\bigr) =(1-\sqrt x)^m\me(\Delta)$, and
integrating over $x$, the integral evaluates to
$2\me(\Delta)/(m+1)(m+2)$. Thus $(m+1)(m+2)(a+b)=2$ and $a=b=1/(m+1)(m+2)$.
\end{proof}

The measure $\nu$, with $u_j\wedge \d\nu=-\d\me$ on the facet $\Fa_j$ where
$\ell_j=0$, satisfies $\nu(\Fa_j)=m r_j\me(\Delta)$, since $L_j=1/r_j$ at the
opposite vertex $v_j$. Consequently, for any affine function $A$,
\begin{equation*}
\int_{\Fa_j} A \,\d\nu = r_j \me(\Delta)\sum_{k\neq j} A(v_k).
\end{equation*}
Since $(B^{-1})_{jk} = (m+1)\bigl((m+2)\delta_{jk}-1\bigr)$, the extremal
affine function $\eaf$ of $(\Delta,\Lab)$ is $\sum_{j=0}^m \eaf_j r_j$ where
\begin{equation*}
\eaf_j = \sum_{k=0}^m \tfrac12(m+1)(2-(m+2)\delta_{jk})\ell_k.
\end{equation*}
Note that $\eaf$ is linear in the parameters $r_j$: this is the reason for
using such an inverse scale to parametrize the normals. For $m=2$, $\eaf/3 =
(-\ell_0+\ell_1+\ell_2)r_0 +(\ell_0-\ell_1+\ell_2)r_1
+(\ell_0+\ell_1-\ell_2)r_2$, which is positive on the interior of the medial
triangle (with vertices at the midpoints of the edges of $\Delta$).  This
positivity has an analogue for convex quadrilaterals, to which we now turn.

\subsection{Rational Delzant quadrilaterals}\label{s:quad}

Quadrilaterals are not all affine equivalent, but they are projectively
equivalent since the vertices (or the projective normals) give four points in
general position in $\Proj(\torh^*)$ (or $\Proj(\torh)$).  Consequently,
quadrilaterals can be parametrized conveniently by varying the affine
structure, an approach adopted by E.~Legendre in~\cite{Eveline,Eveline2} and
closely related to 5-dimensional toric sasakian geometry (see
Appendix~\ref{s:CR}). Following Legendre, let $\Delta=\{[w,x,y]\in
\Proj(\R^{3*}):w\geq |x|, w\geq |y|\}$ be the quadrilateral with vertices
$[1,\pm1,\pm1]$. In the affine subspace $\{(w,x,y)\in \R^{3*}:w=1\}$ defined
by $(1,0,0)\in \R^{3}$, $\Delta$ is a square. More generally, affine subspaces
meeting $\Delta$ in a compact convex quadrilateral are parametrized by
vectors in the interior of the dual cone $\Delta^*$, spanned by $(1,\pm1,0)$
and $(1,0,\pm1)$. Any such vector is a positive multiple of
$\bigl(1,\frac12(\dd+\ad),\frac12(\dd-\ad)\bigr)$ for some $\dd,\ad\in \R$
with $|\dd|<1$ and $|\ad|<1$. The corresponding affine subspace is
$\{(w,x,y):2w+(\dd+\ad)x+(\dd-\ad)y=2\}$ and the vertices of $\Delta$ in this
subspace are
\begin{equation*}
v_{\1\1}=\frac{(1,-1,-1)}{1-\dd},\quad v_{\1\2}=\frac{(1,-1,1)}{1-\ad}, \quad
v_{\2\1}=\frac{(1,1,-1)}{1+\ad},\quad v_{\2\2}=\frac{(1,1,1)}{1+\dd}
\end{equation*}
with $v_{\1\1}$ opposite to $v_{\2\2}$ and $v_{\1\2}$ opposite to $v_{\2\1}$.
An affine function $A$ is uniquely determined by its values at the vertices,
but these values are constrained by the equality of two expressions for
(twice) the value of $A$ at the intersection of the diagonals:
\begin{equation}\label{affine-constraint}
(1-\dd)A(v_{\1\1})+(1+\dd)A(v_{\2\2})=(1-\ad)A(v_{\1\2})+(1+\ad)A(v_{\2\1}).
\end{equation}

The affine functions obtained by restricting $w+x$, $w-x$, $w+y$ and $w-y$
to this affine subspace will be denoted $\ell'_{\al,\1}$, $\ell'_{\al,\2}$,
$\ell'_{\be,\1}$ and $\ell'_{\be,\2}$ respectively. They clearly satisfy
$\ell'_{\al,\1}+\ell'_{\al,\2}=\ell'_{\be,\1}+\ell'_{\be,\2}$. We also set
\begin{align*}
\ell_{\al,\1}&=\tfrac14(1+\dd)(1+\ad)\ell'_{\al,\1},&
\ell_{\al,\2}&=\tfrac14(1-\dd)(1-\ad)\ell'_{\al,\2},\\
\ell_{\be,\1}&=\tfrac14(1+\dd)(1-\ad)\ell'_{\be,\1},&
\ell_{\be,\2}&=\tfrac14(1-\dd)(1+\ad)\ell'_{\be,\2},
\end{align*}
which satisfy $\ell_{\al,\1}+\ell_{\al,\2}+\ell_{\be,\1}+\ell_{\be,\2}=1$, and
whose nonzero values on vertices are
\begin{align*}
\ell_{\al,\1}(v_{\2\1})=\ell_{\be,\1}(v_{\1\2})&=\tfrac12(1+\dd),&
\ell_{\al,\1}(v_{\2\2})=\ell_{\be,\2}(v_{\1\1})&=\tfrac12(1+\ad)\\
\ell_{\al,\2}(v_{\1\1})=\ell_{\be,\1}(v_{\2\2})&=\tfrac12(1-\ad)&
\ell_{\al,\2}(v_{\1\2})=\ell_{\be,\2}(v_{\2\1})&=\tfrac12(1-\dd).
\end{align*}

These affine functions provide an affine invariant description of a family of
quadrilaterals $\Delta_{\dd,\ad}$ with $(\dd,\ad)\in (-1,1)\times (-1,1)$, and
we can drop the $(w,x,y)$ coordinate system. The parameters $\dd$, $\ad$ can
be interpreted geometrically in terms of the diagonals, which bisect each
other in the ratios $1-\dd:1+\dd$ and $1-\ad:1+\ad$. Inverse scales
$r_{\al,k},r_{\be,k}$ (where $k\in\{\1,\2\}$) for affine normals
$L_{\al,k}:=\ell_{\al,k}/r_{\al,k}$ and $L_{\be,k}:=\ell_{\be,k}/r_{\be,k}$
then define a rational Delzant quadrilateral $(\Delta,\Lab)$ in
$\Proj(\torh^*)$ (with an ordering of its vertices and the affine normals
$\Lab$ indexed $L_{\be,\1}$, $L_{\al,\1}$, $L_{\be,\2}$, $L_{\al,\2}$)
provided that the normals $u_{\al,k}=\d L_{\al,k}$ and $u_{\be,k}=\d
L_{\be,k}$ span a lattice in $\tor$.

The following diagram shows the projection of the quadrilateral onto the
$(x,y)$ plane. This normal form is orthodiagonal and so its Varignon
parallelogram (whose vertices are the midpoints of the sides of the
quadrilateral) is a rectangle. Also shown are the midpoints $v_\dd, v_\ad$ of
the diagonals and the centroid $v_0$.

\begin{center} 
\includegraphics{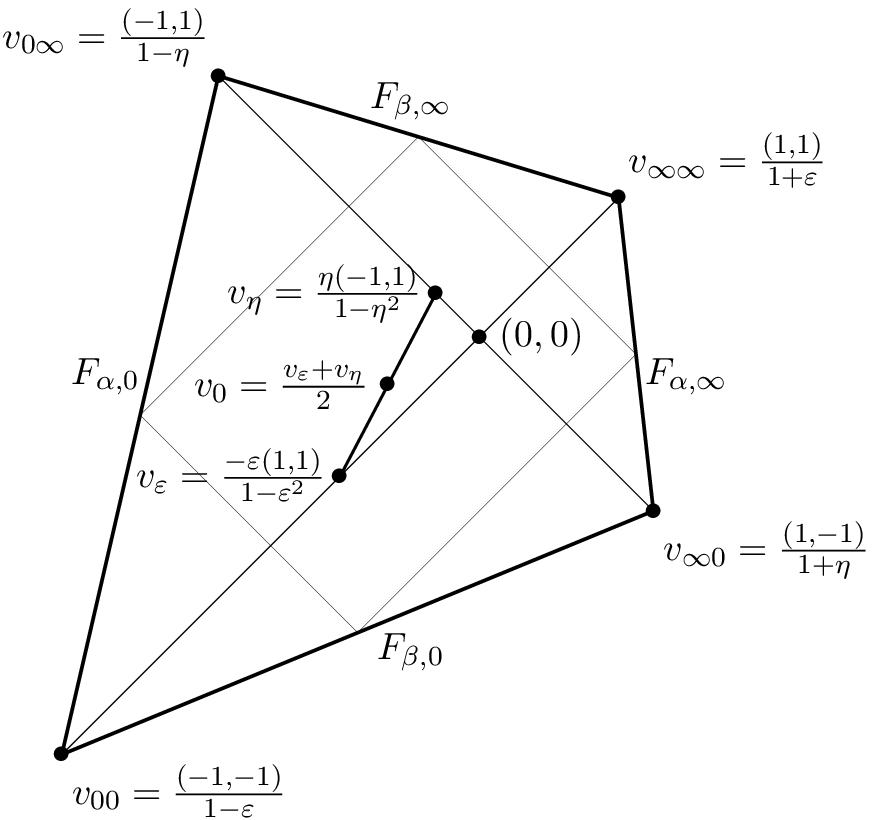}

Figure 1. A rational Delzant quadrilateral with its diagonals and Newton line.
\end{center}

When $\dd=\ad=0$, $\Delta$ is a parallelogram (these are all affine
equivalent); the associated simply connected symplectic $4$-orbifolds are
products of weighted projective lines (including $\C P^1\times \C P^1$ when
$r_{\al,\1}=r_{\al,\2}$ and $r_{\be,\1}=r_{\be,\2}$).  If $\ad=\pm\dd$,
$\Delta$ is a trapezium (two parallel sides); the associated simply connected
symplectic $4$-orbifolds are orbifold weighted projective line bundles over a
weighted projective line (which include the smooth Hirzebruch surfaces
$\Proj(\cO\oplus\cO(k))\to \C P^1$).

The extremal affine function $\eaf$ may be written $\eaf=c(\dd,\ad)
\sum_{k=\1,\2}(\eaf_{\al,k}r_{\al,k}+\eaf_{\be,k}r_{\be,k})$ where the
normalization constant $c(\dd,\ad)$ will be chosen shortly. By symmetry, it
suffices to compute $\eaf_{\al,\1}$, noting that its integral over $\Delta$
against any affine function $A$ depends only on the value of $A$ at the
midpoint of the edge $v_{\1\1}v_{\1\2}$.
\begin{lemma}\label{l:eaf-eqns} $\eaf_{\al,\1}$ satisfies the following
equations:
\begin{align}\label{eaf-eqn1}
\eaf_{\al,\1}(v_{\1\2})+\eaf_{\al,\1}(v_{\2\1})
+(1-\dd)\eaf_{\al,\1}(v_{\2\2})&=0\\
\eaf_{\al,\1}(v_{\1\1})+\eaf_{\al,\1}(v_{\2\2})
+(1-\ad)\eaf_{\al,\1}(v_{\2\1})&=0.
\label{eaf-eqn2}
\end{align}
\end{lemma}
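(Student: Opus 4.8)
The plan is to extract the two relations from the defining property of the extremal affine function, combined with the decomposition $\eaf=c(\dd,\ad)\sum_k(\eaf_{\al,k}r_{\al,k}+\eaf_{\be,k}r_{\be,k})$, by isolating the contribution of the single normal $L_{\al,\1}$. Contracting the vector identity defining $\eaf$ in \S\ref{s:eaf-stab} with an arbitrary affine function $A$ gives
\begin{equation*}
\int_\Delta \eaf\,A\,\d\me = 2\int_{\del\Delta}A\,\d\nu .
\end{equation*}
Both sides are linear in the inverse scales $r_j$: the left by Legendre's linearity of $\eaf$ (as already seen for simplices), and the right because on the facet $\Fa_j$ the relation $u_j\wedge\d\nu=-\d\me$ with $u_j=\d\ell_j/r_j$ forces $\d\nu=r_j\,\d\nu^0_j$, where $\d\nu^0_j$ is the $r_j$-independent measure determined by $\d\ell_j\wedge\d\nu^0_j=-\d\me$. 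Matching the coefficient of $r_{\al,\1}$ therefore yields the single-facet identity
\begin{equation*}
c(\dd,\ad)\int_\Delta \eaf_{\al,\1}\,A\,\d\me = 2\int_{\Fa_{\al,\1}}A\,\d\nu^0_{\al,\1}
\end{equation*}
for every affine $A$.

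First I would identify $\Fa_{\al,\1}$, the facet on which $\ell_{\al,\1}$ vanishes, with the edge $v_{\1\1}v_{\1\2}$, and note that $\d\nu^0_{\al,\1}$ is translation invariant along this segment, so its barycentre is the midpoint $M_{\al,\1}=\tfrac12(v_{\1\1}+v_{\1\2})$. As $A$ is affine, the right-hand side equals $\nu^0_{\al,\1}(\Fa_{\al,\1})\,A(M_{\al,\1})$; this is precisely the assertion preceding the lemma that $\int_\Delta\eaf_{\al,\1}A\,\d\me$ depends on $A$ only through $A(M_{\al,\1})$. In particular $\int_\Delta\eaf_{\al,\1}A\,\d\me=0$ for every affine $A$ with $A(M_{\al,\1})=0$, a two-dimensional space of test functions, which should produce exactly two independent linear relations among the vertex values $\eaf_{\al,\1}(v_{\bullet})$.

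To make these relations explicit I would split $\Delta$ along the diagonal $v_{\1\1}v_{\2\2}$ into $T_1=v_{\1\1}v_{\1\2}v_{\2\2}$ and $T_2=v_{\1\1}v_{\2\1}v_{\2\2}$ and evaluate $\int_\Delta\eaf_{\al,\1}A\,\d\me$ via Lemma~\ref{l:simplicial-integral} on each triangle. The only metric input is the affine-invariant ratio $\me(T_1):\me(T_2)$: since the two triangles share the base $v_{\1\1}v_{\2\2}$, this ratio equals that in which the diagonals cut $v_{\1\2}v_{\2\1}$, giving $\me(T_1):\me(T_2)=(1+\ad):(1-\ad)$, while the companion bisection ratio $1-\dd:1+\dd$ of the diagonal $v_{\1\1}v_{\2\2}$ enters through the vertex weights. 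Feeding the tabulated vertex values of the $\ell$'s into the resulting bilinear expression and imposing $A(M_{\al,\1})=0$ makes the right-hand side vanish, and the two test-function identities should collapse to \eqref{eaf-eqn1} and \eqref{eaf-eqn2}.

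The main obstacle is this final simplification: the raw output of the simplicial lemma is a cumbersome combination of the four values $\eaf_{\al,\1}(v_{\bullet})$ weighted by $\me(T_1)$ and $\me(T_2)$, and it is not evident a priori that it reduces to the clean forms \eqref{eaf-eqn1}--\eqref{eaf-eqn2}. The reduction must use the affine constraint \eqref{affine-constraint} on the values of $\eaf_{\al,\1}$ to absorb the denominators $1\pm\dd$ and $1\pm\ad$ coming from the areas, so that the coefficients $(1-\dd)$ and $(1-\ad)$ emerge precisely from the diagonal-bisection ratios. A useful preliminary check, which I would perform first to fix signs, is the degenerate case $\dd=\ad=0$: there $\Delta$ is the square, $M_{\al,\1}=(-1,0)$ and $\eaf_{\al,\1}=1-3x$ in the normal-form coordinates, so both equations reduce to $\sum_{v\neq v_{\1\1}}\eaf_{\al,\1}(v)=0$. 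That each of \eqref{eaf-eqn1} and \eqref{eaf-eqn2} omits one of the two vertices of $\Fa_{\al,\1}$ (namely $v_{\1\1}$ and $v_{\1\2}$ respectively) suggests organizing the general computation around the omitted vertex.
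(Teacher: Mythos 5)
Your proposal is correct and is essentially the paper's own argument: the key identity that $\int_\Delta\eaf_{\al,\1}A\,\d\me$ depends on an affine $A$ only through $A\bigl(\tfrac12(v_{\1\1}+v_{\1\2})\bigr)$ (which the paper merely asserts in the sentence before the lemma, and which you derive properly from linearity in the $r_j$ and the scaling $\d\nu=r_j\,\d\nu^0_j$ of the facet measures), followed by a diagonal splitting and Lemma~\ref{l:simplicial-integral}.

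The one substantive difference is the choice of test functions and splitting, and it is exactly what your ``main obstacle'' hinges on. The paper proves each equation separately, taking a test function \emph{constant on the diagonal along which it splits}: for \eqref{eaf-eqn1} it uses $A_\dd$ with $A_\dd(v_{\1\2})=A_\dd(v_{\2\1})=1+\dd$, $A_\dd(v_{\1\1})=-(1+\dd)$, $A_\dd(v_{\2\2})=3-\dd$, and splits along $v_{\1\2}v_{\2\1}$. With that choice the cross-terms of Lemma~\ref{l:simplicial-integral} telescope (on $v_{\1\1}v_{\1\2}v_{\2\1}$ the integral is outright a multiple of $\eaf_{\al,\1}(v_{\1\2})+\eaf_{\al,\1}(v_{\2\1})$), the identity $(1+\dd)^2+(1-\dd)(3+\dd)=4$ finishes it, and \eqref{eaf-eqn1} drops out directly, with no appeal to \eqref{affine-constraint}. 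Your version---a single splitting along $v_{\1\1}v_{\2\2}$ and the full $2$-parameter family of test functions vanishing at the midpoint---is sound but strictly heavier: each of your two raw relations is a linear \emph{combination} of \eqref{eaf-eqn1}, \eqref{eaf-eqn2} and \eqref{affine-constraint}, not one of them individually, so the deferred elimination is genuinely needed. It does terminate: since the pairing $(F,A)\mapsto\int_\Delta FA\,\d\me$ is positive definite on affine functions, your two relations are independent modulo \eqref{affine-constraint}, hence the three conditions cut out exactly the line spanned by $\eaf_{\al,\1}$, which forces equivalence with \eqref{eaf-eqn1}--\eqref{eaf-eqn2}; but note this independence observation is needed to make your plan non-circular, since you cannot invoke the truth of the lemma to identify the solution line. (Also a small slip in your sanity check: at $\dd=\ad=0$ the two equations are the sums omitting $v_{\1\1}$ and $v_{\1\2}$ respectively---they agree numerically there only because $\eaf_{\al,\1}(v_{\1\1})=\eaf_{\al,\1}(v_{\1\2})$ on the square; your closing remark gets the omitted vertices right.)
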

\begin{proof} Let $A_\dd$ be an affine function which is constant on the
diagonal $v_{\2\1}v_{\1\2}$ and vanishes at the midpoint of $v_{\1\1}v_{\1\2}$.
Up to scale, we may take $A_\dd(v_{\2\1})=A_\dd(v_{\1\2})=1+\dd$,
$A_\dd(v_{\1\1})=-(1+\dd)$, and $A_\dd(v_{\2\2})=3-\dd$ (which
verifies~\eqref{affine-constraint}).  The integral of $\eaf_{\al,\1}A_\dd$
over $\Delta$ (which vanishes) may be computed using
Lemma~\ref{l:simplicial-integral} by splitting $\Delta$ into two triangles
along the diagonal $v_{\2\1}v_{\1\2}$. Up to a positive constant, the result is
\begin{equation*}
(1-\dd)(3+\dd)\bigl(\eaf_{\al,\1}(v_{\1\2})+\eaf_{\al,\1}(v_{\2\1})\bigr)
+4(1-\dd) \eaf_{\al,\1}(v_{\2\2})+(1+\dd)^2\eaf_{\al,\1}(v_{\1\2})
\end{equation*}
which readily yields~\eqref{eaf-eqn1}; \eqref{eaf-eqn2} follows similarly,
using the diagonal $v_{\1\1}v_{\2\2}$.
\end{proof}

It follows that $\eaf_{\al,\1}$ is a constant multiple of the affine function
\begin{multline*}
\bigl(2+(1-\dd)(1-\ad)\bigr)(\ell_{\al,\2}-\ell_{\al,\1})
       +\bigl(2-(1-\dd)(1-\ad)\bigr)(\ell_{\be,\1}+\ell_{\be,\2})\\
=2(1-2\ell_{\al,\1})+(1-\dd)(1-\ad)(2\ell_{\al,\2}-1).
\end{multline*}
With some further work we can compute the constant: if we set $c(\dd,\ad)=1$,
it equals $24/\bigl(4-(1-\dd^2)(1-\ad^2)\bigr)$. We can instead take this
constant as the definition of $c(\dd,\ad)$ (by symmetry, it is the same for
all four components of $\eaf$). For $\eaf_{\al,\1}$, we then have
\begin{align*}
\eaf_{\al,\1}(v_{\1\1})&=2-\ad(1-\dd)(1-\ad)&
\eaf_{\al,\1}(v_{\2\2})&=-2+(1+\dd)(1-\ad)\\
\eaf_{\al,\1}(v_{\1\2})&=2-\dd(1-\dd)(1-\ad)&
\eaf_{\al,\1}(v_{\2\1})&=-2+(1-\dd)(1+\ad)\\
2\eaf_{\al,\1}(v_\dd)&=(1-\ad)(2-(1-\dd)(1+\ad))&
2\eaf_{\al,\1}(v_\ad)&=(1-\dd)(2-(1+\dd)(1-\ad))
\end{align*}
\begin{equation}\label{centroid}
4\eaf_{\al,\1}(v_0)=(1+\dd^2)(1-\ad)+(1+\ad^2)(1-\dd)
\end{equation}
where $v_\dd=\frac12(v_{\1\1}+v_{\2\2})$ and
$v_\ad=\frac12(v_{\1\2}+v_{\2\1})$ are the midpoints of the diagonals, and
$v_0=\frac12(v_\dd+v_\ad)$ is the centroid of $\Delta$.

\begin{lemma}\label{l:unstable} The extremal affine function $\eaf$ is
positive at the centroid of $\Delta$, and is also positive at the midpoints of
the diagonals if $(1+|\dd|)(1+|\ad|)<2$. In general, the value of $\eaf$ at
the midpoint of a diagonal is a positive multiple of the Futaki invariant of a
simple PL convex function with crease along that diagonal.
\end{lemma}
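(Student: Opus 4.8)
The plan is to exploit the decomposition $\eaf=c(\dd,\ad)\sum_{k}(\eaf_{\al,k}r_{\al,k}+\eaf_{\be,k}r_{\be,k})$, in which $c(\dd,\ad)=24/\bigl(4-(1-\dd^2)(1-\ad^2)\bigr)$ is positive (its denominator lies in $[3,4)$) and all inverse scales $r_{\al,k},r_{\be,k}$ are positive. Positivity of $\eaf$ at a point $p$ for every labelling is then equivalent to positivity of each of the four component values $\eaf_{\al,k}(p),\eaf_{\be,k}(p)$. Since the reflections and diagonal swap of the square permute these four components while acting on the parameters by the sign changes $(\dd,\ad)\mapsto(-\ad,-\dd),\,(\ad,\dd),\,(\dd,-\ad)$, it suffices in each case to treat $\eaf_{\al,\1}$ and recover the rest by symmetry.

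For the centroid I would read off \eqref{centroid}, $4\eaf_{\al,\1}(v_0)=(1+\dd^2)(1-\ad)+(1+\ad^2)(1-\dd)$, which is manifestly positive for $|\dd|,|\ad|<1$; the remaining three components arise from the sign changes above and are again sums of two positive terms, so $\eaf(v_0)>0$. For the diagonal midpoints I would use $2\eaf_{\al,\1}(v_\dd)=(1-\ad)\bigl(2-(1-\dd)(1+\ad)\bigr)$: the four components at $v_\dd$ (and, by reflection, at $v_\ad$) carry the four products $(1\pm\dd)(1\pm\ad)$, each bounded by $(1+|\dd|)(1+|\ad|)$, so all four are positive once $(1+|\dd|)(1+|\ad|)<2$, giving $\eaf(v_\dd),\eaf(v_\ad)>0$.

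The substantive point is the last one. For the diagonal through $v_\dd$ I would take the crease function $f=\max(A,0)$, where $A$ is the affine function vanishing on $v_{\1\1}v_{\2\2}$ normalized by $A(v_{\2\1})=P>0$; then $f=A$ on the triangle $v_{\1\1}v_{\2\2}v_{\2\1}$ and $f=0$ on the complementary one, so \eqref{futaki} localizes to that triangle. The interior term is evaluated by Lemma~\ref{l:simplicial-integral}, and using $\eaf(v_{\1\1})+\eaf(v_{\2\2})=2\eaf(v_\dd)$ it becomes $\tfrac{P}{6}\bigl(\eaf(v_\dd)+\eaf(v_{\2\1})\bigr)\me(v_{\1\1}v_{\2\2}v_{\2\1})$; the boundary term contributes only over the two facets $\Fa_{\al,\2},\Fa_{\be,\1}$ meeting $v_{\2\1}$, giving $\tfrac{P}{2}\bigl(\nu(\Fa_{\al,\2})+\nu(\Fa_{\be,\1})\bigr)$. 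Thus
\begin{equation*}
\cF_{\Delta,\Lab}(f)=\tfrac{P}{2}\bigl(\nu(\Fa_{\al,\2})+\nu(\Fa_{\be,\1})\bigr)
-\tfrac{P}{12}\bigl(\eaf(v_\dd)+\eaf(v_{\2\1})\bigr)\,\me(v_{\1\1}v_{\2\2}v_{\2\1}).
\end{equation*}

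The main obstacle is to show that the right-hand side is a \emph{positive} multiple of $\eaf(v_\dd)$, and in particular that the spurious opposite-vertex value $\eaf(v_{\2\1})$ cancels. I would treat the display as a linear form in the four labels and match it against $\eaf(v_\dd)$ coefficient by coefficient. Computing the edge measures from $u_j\wedge\d\nu=-\d\me$ in the normal form of \S\ref{s:quad} gives $\nu(\Fa_{\al,\2})=8r_{\al,\2}/\Pi$ and $\nu(\Fa_{\be,\1})=8r_{\be,\1}/\Pi$ with $\Pi=(1-\dd^2)(1-\ad^2)$, while $\me(v_{\1\1}v_{\2\2}v_{\2\1})=2/\bigl((1+\ad)(1-\dd^2)\bigr)$. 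For the two labels $r_{\al,\1},r_{\be,\2}$, whose facets do not bound the triangle, only the interior term survives, and the cancellation rests on the pointwise identity $\eaf_{*,k}(v_{\2\1})=-\tfrac{2}{1-\ad}\,\eaf_{*,k}(v_\dd)$, which I would read off from the explicit formulas (e.g. $-\eaf_{\al,\1}(v_{\2\1})=2-(1-\dd)(1+\ad)$ is precisely the factor in $\eaf_{\al,\1}(v_\dd)$). For $r_{\al,\2},r_{\be,\1}$ the boundary integral supplies the remaining discrepancy, reducing the match to a finite polynomial identity in $(\dd,\ad)$. Assembling the four coefficients yields $\cF_{\Delta,\Lab}(f)=\dfrac{P}{6(1-\dd^2)(1-\ad)}\,\eaf(v_\dd)$, whose prefactor is positive for $|\dd|,|\ad|<1$; the statement for $v_\ad$ then follows from the reflection exchanging the two diagonals. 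The only delicate part is the bookkeeping in this cancellation, which the symmetry reduction to the two representative labels keeps manageable.
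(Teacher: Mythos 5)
Your proof is correct, and for the main (third) assertion it takes a genuinely different route from the paper's. The paper's device is to observe that each label-coefficient $F_j$ in $\cF_{\Delta,\Lab}=\sum_j F_j\,r_j$ annihilates affine functions, and then, for each $j$ \emph{separately}, to replace the crease function by the affine-equivalent representative vanishing on the triangle containing $\Fa_j$: this kills the boundary term in that coefficient, so only an interior integral over the opposite triangle survives (evaluated by Lemma~\ref{l:simplicial-integral} and reduced by Lemma~\ref{l:eaf-eqns}), and dihedral symmetry reduces everything to the single component $\eaf_{\al,\1}$ --- no facet measure is ever computed. You instead fix one representative $f=\max(A,0)$, compute $\cF_{\Delta,\Lab}(f)$ with its boundary terms, and match coefficients of the $r_j$. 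Your claimed ingredients are all true: with $\d\me=|\d x\wedge \d y|$ in the normal form of \S\ref{s:quad} one has $\nu(\Fa_{\al,\2})=8r_{\al,\2}/\Pi$ and $\nu(\Fa_{\be,\1})=8r_{\be,\1}/\Pi$ with $\Pi=(1-\dd^2)(1-\ad^2)$, and $\me(T)=2/((1+\ad)(1-\dd^2))$; the identity $\eaf_j(v_{\2\1})=-\tfrac{2}{1-\ad}\,\eaf_j(v_\dd)$ is exactly \eqref{eaf-eqn2} for $j=(\al,\1)$, and for $j=(\be,\2)$ it follows by applying \eqref{eaf-eqn2} at parameters $(-\dd,\ad)$ and transporting by the reflection in the diagonal $v_{\1\2}v_{\2\1}$, which fixes $v_\dd$ and $v_{\2\1}$ --- this step deserves a sentence, since Lemma~\ref{l:eaf-eqns} as stated covers only $\eaf_{\al,\1}$. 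For the two components whose facets bound $T$, the required match reduces to the identity $2\eaf_j(v_\dd)+(1-\ad)\eaf_j(v_{\2\1})=4-(1-\dd^2)(1-\ad^2)=24/c(\dd,\ad)$, which holds by direct expansion and makes the boundary contribution $\tfrac{P}{2}\cdot 8/\Pi$ exactly absorb the discrepancy. Hence your closed formula $\cF_{\Delta,\Lab}(f)=\tfrac{P}{6(1-\dd^2)(1-\ad)}\,\eaf(v_\dd)$ is right (for the square with unit labels both sides equal $\tfrac{8P}{3}$). What your route buys is this explicit constant, sharper than the paper's qualitative ``positive multiple''; what it costs is the computation of facet measures and a two-case coefficient analysis that the paper's per-component affine renormalization trick avoids entirely. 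The centroid and midpoint claims you handle exactly as the paper does.
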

\begin{proof} By~\eqref{centroid} and analogous formulae for the other
components, $\eaf$ is positive at the centroid $v_0$, since $|\dd|,|\ad|<1$.
Similarly, it is positive at the midpoints of the diagonals for
$(1+|\dd|)(1+|\ad|)<2$.  We now compute the Futaki invariant of a simple PL
function $H$ with a crease along the diagonal $v_{\1\1}v_{\2\2}$ through
$v_\dd$.  By symmetry, it suffices to compute the $r_{\al,\1}$ component
$\eaf_{\al,\1}$.  Modulo an affine function (on which the Futaki invariant
vanishes), we may assume $f$ vanishes on $\Fa_{\al,\1}$, so that we only need
to compute the integral of $-f\eaf_{\al,\1}$ over the triangle
$T=v_{\1\1}v_{\2\2}v_{\2\1}$, on which we may suppose
$f=\ell'_{\al,\1}-\ell'_{\be,\1}$, i.e., $f(v_{\2\1})=2/(1+\ad)$.  By
Lemma~\ref{l:simplicial-integral}, the integral evaluates to a universal
constant positive multiple of $-\me(T)f(v_{\2\1})
(\eaf_{\al,\1}(v_{\1\1})+\eaf_{\al,\1}(v_{\2\2})+2\eaf_{\al,\1}(v_{\2\1}))$.
By Lemma~\ref{l:eaf-eqns}, this is a universal constant positive multiple of
\begin{equation*}
-\frac{\bigl(\eaf_{\al,\1}(v_{\1\1})+\eaf_{\al,\1}(v_{\2\2})\bigr)
  \bigl(1-2/(1-\ad)\bigr)}{(1+\dd)(1-\dd)(1+\ad)^2}
=\frac{2\eaf_{\al,\1}(v_\dd)} {(1+\dd)(1-\dd)(1+\ad)(1-\ad)}.
\end{equation*}
Hence the Futaki invariant is a positive multiple of $\eaf(v_\dd)$, as
required. The argument for the other diagonal/midpoint is similar.
\end{proof}

We conclude that if a rational Delzant quadrilateral is K-polystable, then the
extremal affine function must be positive at the diagonal midpoints.

\begin{defn}\label{nice-quads} Let $(\Delta,\Lab)$ be a rational Delzant
quadrilateral with vertices $v_{\1\1}$, $v_{\2\2}$, $v_{\1\2}$, $v_{\2\1}$ so
that $v_{\1\1}$ and $v_{\2\2}$ are opposite.  Then $\Delta$ is said to be
\emph{equipoised} if
$\eaf(v_{\1\1})+\eaf(v_{\2\2})=\eaf(v_{\1\2})+\eaf(v_{\2\1})$, and
\emph{temperate} if $\eaf(v_{\1\1})+\eaf(v_{\2\2})$ and
$\eaf(v_{\1\2})+\eaf(v_{\2\1})$ are both positive.
\end{defn}
The condition to be equipoised was introduced in~\cite{Eveline}, and means
equivalently that the extremal affine function has equal values on the
midpoints $v_\dd$ and $v_\ad$ of the diagonals of $\Delta$. This is automatic
if $\Delta$ is a parallelogram (when $v_\dd=v_\ad$)---otherwise it means that
the extremal affine function is constant (hence positive by
Lemma~\ref{l:unstable}) on the \emph{Newton line} $v_\dd v_\ad$. The condition
to be temperate is the much weaker condition that $\eaf$ is positive on the
line segment between $v_\dd$ and $v_\ad$.

\section{Orbifold compactifications of ambitoric structures}\label{s:orbi-amb}

\subsection{Ambitoric structures and their local classification}

In~\cite{ACG1}, we studied the following $4$-dimensional geometric structure.
\begin{defn} An \emph{ambik\"ahler structure} on a real $4$-manifold or
orbifold $M$ consists of a pair of K\"ahler metrics $(g_-, J_-, \omega_-)$ and
$(g_+, J_+, \omega_+)$ such that
\begin{bulletlist}
\item $g_-$ and $g_+$ induce the same conformal structure (i.e., $g_- =
f^2g_+$ for a positive function $f$ on $M$);
\item $J_-$ and $J_+$ have opposite orientations (equivalently the
volume elements $\frac1{2}\omega_-\wedge\omega_-$ and
$\frac1{2}\omega_+\wedge\omega_+$ on $M$ have opposite signs).
\end{bulletlist}
The structure is said to be \emph{ambitoric} if in addition
\begin{bulletlist}
\item there is a $2$-dimensional subspace $\tor$ of vector fields on $M$,
linearly independent on a dense open set, whose elements are hamiltonian and
Poisson-commuting Killing vector fields with respect to both $(g_-,\omega_-)$
and $(g_+,\omega_+)$.
\end{bulletlist}
\end{defn}
Thus $M$ has a pair of conformally equivalent but oppositely oriented K\"ahler
metrics, invariant under a local $2$-torus action, and both locally toric with
respect that action.
\begin{exs} There are three classes of examples of ambitoric structures.
\begin{numlist}
\item {\bf Toric K\"ahler products}. Let $(\Sigma_1,g_1,J_1,\omega_1)$ and
$(\Sigma_2,g_2,J_2,\omega_2)$ be (locally) toric $2$-dimensional K\"ahler
manifolds or orbifolds, with hamiltonian Killing vector fields $K_1$ and
$K_2$. Then $M=\Sigma_1\times \Sigma_2$ is ambitoric, with $g_\pm=g_1\oplus
g_2$, $J_\pm=J_1\oplus (\pm J_2)$, $\omega_\pm=\omega_1\oplus (\pm\omega_2)$
and $\tor$ spanned by $K_1$ and $K_2$.
\item {\bf Toric Calabi geometries}. Let $(\Sigma,g,J,\omega)$ be a
$2$-dimensional K\"ahler manifold or orbifold with hamiltonian
Killing vector field $K$, let $\pi\colon P\to \Sigma$ be a circle bundle with
connection $\theta$ and curvature $\d\theta=\pi^*\omega$, and let $A$ be a
positive function on an open subset $U$ of $\R^+$. Then $M=P\times U$ is
ambitoric, with
\begin{align*}
g_\pm &= z^{\pm1}
\Bigl(g+\frac{\bigl(z^{-1}\d z\bigr)^2}{A(z)}+ A(z)\theta^2\Bigr),\\
\omega_\pm &= z^{\pm1}\bigl( \omega \pm z^{-1} \d z\wedge\theta\bigr),\qquad
J_\pm (z^{-1} \d z) = \pm A(z) \theta,
\end{align*}
and the local torus action spanned by $K$ and the generator of the circle
action. Here $z\colon M\to \R^+$ is the projection onto $U\sub \R^+$.
\item {\bf Regular ambitoric structures}. Let $q(z)=q_0z^2+2q_1 z+ q_2$ be a
quadratic polynomial and let $M$ be a $4$-dimensional manifold or orbifold
with real-valued functions $(x,y,\tau_0,\tau_1,\tau_2)$ such that $x>y$,
$2q_1\tau_1=q_0\tau_2+q_2\tau_0$, and their exterior derivatives span each
cotangent space. Let $\tor$ be the $2$-dimensional space of vector fields $K$
on $M$ with $\d x(K)=0=\d y(K)$ and $\d\tau_j(K)$ constant, and let $A$ and
$B$ be functions on open neighbourhoods of the images of $x$ and $y$ in $\R$.
Then $M$ is ambitoric with
\begin{align} \label{g-pm}
g_\pm &= \biggl(\frac{x-y}{q(x,y)}\biggr)^{\!\!\pm1}
\biggl(\frac{\d x^2}{A(x)} + \frac{\d y^2}{B(y)}\\ &\notag \qquad\quad
+ A(x) \Bigl(\frac{y^2 \d\tau_0 + 2y \d\tau_1 + \d\tau_2}{(x-y)q(x,y)}\Bigr)^2
+ B(y) \Bigl(\frac{x^2 \d\tau_0 + 2x \d\tau_1 + \d\tau_2}{(x-y)q(x,y)}\Bigr)^2
\biggr),\\
\label{omega-pm}
\omega_\pm &= \biggl(\frac{x-y}{q(x,y)}\biggr)^{\!\!\pm 1}
\frac{\d x\wedge (y^2 \d\tau_0 + 2y \d\tau_1 + \d\tau_2)
\pm \d y \wedge (x^2 \d\tau_0 + 2x \d\tau_1 + \d\tau_2)}{(x-y)q(x,y)},\\
\label{J-pm}
J_\pm &\d x= A(x)\frac{y^2 \d\tau_0 + 2y \d\tau_1 + \d\tau_2}{(x-y)q(x,y)},
\quad
J_\pm \d y= \pm B(y)\frac{x^2 \d\tau_0 + 2x \d\tau_1 + \d\tau_2}{(x-y)q(x,y)},
\end{align}
where $q(x,y)=q_0xy+q_1(x+y)+q_2$.
\end{numlist}
If $(g_\pm,J_\pm,\omega_\pm,\tor)$ is a regular ambitoric structure for which
the quadratic form $q$ has vanishing discriminant, then $(g_+,J_+,\omega_+)$ is
\emph{orthotoric} in the sense of~\cite{ACG}: see~\cite{ACG1}.
\end{exs}
It is easy to check that these explicit geometries are ambitoric
(cf.~\cite{ACG,ACG1}).

\begin{result} \cite{ACG1} Let $(M,g_\pm,J_\pm,\omega_\pm,\tor)$ be an
ambitoric $4$-manifold or orbifold.  Then any point in an open dense subset of
$M$ has a neighbourhood on which $(g_\pm,J_\pm,\omega_\pm,\tor)$ is either a
toric K\"ahler product, a toric Calabi geometry, or a regular ambitoric
structure.
\end{result}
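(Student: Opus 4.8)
The plan is to reduce the problem to pointwise four-dimensional conformal linear algebra, extract a symmetric involution and its eigendistributions, and then integrate the structure equations coming from the closedness of the two K\"ahler forms using the torus symmetry; equivalently, to manufacture a hamiltonian $2$-form and invoke the classification of \cite{ACG}.

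First I would record the basic pointwise fact that, since $J_+$ and $J_-$ induce opposite orientations of the common conformal class $[g_\pm]$, they commute as (conformally) orthogonal endomorphisms of each tangent space. Hence $\sigma:=J_+J_-$ is a $g_+$-symmetric involution ($\sigma^2=\mathrm{id}$), and since $J_-\neq\pm J_+$ (either equality would force the two orientations to coincide) its $\pm1$-eigenspaces $T^+M$ and $T^-M$ are each two-dimensional, mutually $g_+$-orthogonal, and invariant under $J_+$, $J_-$ and every $K\in\tor$ (the Killing fields preserve both K\"ahler structures, hence $\sigma$ and its eigensplitting). On $T^+M$ one has $J_-=J_+$ and on $T^-M$ one has $J_-=-J_+$, and rewriting $g_+(J_-\,\cdot,\cdot)$ through $\omega_+$ gives $\omega_-(\cdot,\cdot)=-f^2\,\omega_+(\sigma\,\cdot,\cdot)$, so the two symplectic forms agree, up to the conformal factor and a sign that flips between $T^+M$ and $T^-M$, in accordance with the opposite orientations.

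Next I would turn the closedness of $\omega_+$ and $\omega_-$ into structure equations: differentiating the identity $\omega_-=-f^2\,\omega_+(\sigma\,\cdot,\cdot)$ and using $d\omega_\pm=0$ couples $d\log f$ to the second fundamental forms of the eigendistributions. The natural way to organize this is to check that the $J_+$-invariant $2$-form equal to $x\,\omega_+$ on $T^+M$ and $y\,\omega_+$ on $T^-M$ is a \emph{hamiltonian $2$-form} for $(g_+,J_+)$, whose eigenvalues relative to $\omega_+$ are two functions $x,y$ constant along the torus orbits (with $x>y$ on the open set where $\sigma$ genuinely has both eigenvalues). Feeding in the two commuting Killing fields, which preserve $\sigma$, $f$ and the splitting, I expect the equations to force $T^\pm M$ to be integrable, to single out $x,y$ as eigenvalue-type coordinates transverse to the orbits together with momentum/angular coordinates $\tau_0,\tau_1,\tau_2$ subject to one linear relation, and to produce the conformal factor in the separated form $f=q(x,y)/(x-y)$ for a symmetric, separately-affine $q$, the remaining leafwise freedom being encoded by one-variable functions $A(x)$ and $B(y)$.

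Finally I would read off the three cases according to the order of the hamiltonian $2$-form, i.e.\ the number of non-constant eigenvalues among $x,y$: the order-two, generic solution with $x,y$ independent and $q$ nondegenerate is precisely the regular ambitoric normal form \eqref{g-pm}--\eqref{J-pm}; the order-one degeneration, in which $f$ depends effectively on a single eigenvalue so that one eigendistribution fibres over a surface, yields the toric Calabi geometry; and the order-zero case, where $T^+M\oplus T^-M$ is parallel and $f$ is leafwise constant, yields the toric K\"ahler product. The main obstacle is the middle step: integrating the coupled structure equations to prove that separation of variables actually occurs---that the a priori fourth-order system collapses to ordinary differential equations in $x$ and $y$, with the quadratic $q$ emerging as the integration datum tying the two directions together---and verifying that these three strata are exhaustive.
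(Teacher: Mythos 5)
Your pointwise linear algebra is correct, and it is the standard starting point: $\omega_+$ is self-dual and $\omega_-$ anti-self-dual for the common conformal class, so $J_+$ and $J_-$ commute, $\sigma=J_+J_-$ is a traceless, symmetric involution, and one gets the $\T$- and $J_\pm$-invariant orthogonal splitting $TM=T^+M\oplus T^-M$ with $\omega_-=-f^2\,\omega_+(\sigma\cdot,\cdot)$ (in fact this holds everywhere, not just on a dense open set, since opposite orientations force $J_-\neq\pm J_+$ pointwise). Note also that the present paper does not prove this statement: it is quoted from Part I \cite{ACG1}, so the relevant comparison is with the argument given there.

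The decisive step of your plan, however, is not merely unproven but false, and the failure is exactly where the content of the theorem lies. You propose that invariance together with $\d\omega_\pm=0$ forces the $J_+$-invariant form equal to $x\,\omega_+$ on $T^+M$ and $y\,\omega_+$ on $T^-M$ to be a \emph{hamiltonian} $2$-form for $(g_+,J_+)$, after which you invoke the classification of \cite{ACG}. By \cite{ACG,ACG2}, a K\"ahler surface carrying a nontrivial hamiltonian $2$-form is locally a product, of Calabi type, or \emph{orthotoric} (orders $0$, $1$, $2$ respectively); so your argument, if it worked, would show that every ambitoric $4$-orbifold is locally product, Calabi, or orthotoric. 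That conclusion is strictly weaker than the theorem and is contradicted by its third alternative: the regular ambitoric structures \eqref{g-pm}--\eqref{J-pm} of elliptic or hyperbolic type (Definition~\ref{rough}, i.e.\ $q$ of nonvanishing discriminant) are ambitoric, are neither products nor of Calabi type, and reduce to orthotoric metrics only when the discriminant of $q$ vanishes. Your identification ``order two $=$ regular'' silently conflates orthotoric with regular. This is not a technicality: hamiltonian $2$-form methods reach only \emph{equipoised} quadrilaterals (Legendre \cite{Eveline}), and the elliptic/hyperbolic regular metrics exist precisely to remove that constraint---Theorem~\ref{thm:stability} would fail for non-equipoised temperate quadrilaterals if your trichotomy were exhaustive. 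Accordingly, the classification in \cite{ACG1} cannot and does not reduce to the hamiltonian $2$-form classification: after the $\sigma$-splitting it integrates the structure equations directly, using the momentum data of \emph{both} K\"ahler structures on the $2$-dimensional orbit space, and the quadratic $q$---with its $\mathrm{PSL}(2,\R)$ gauge freedom and the identification of $\tor$ with the quadratics orthogonal to $q$, cf.\ \S\ref{s:momenta}---emerges there as an integration datum of that analysis, not as the eigenvalue pattern of a hamiltonian $2$-form.
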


\subsection{Invariant geometry in momentum coordinates}\label{s:momenta}

In order to compactify ambitoric structures using the approach of
section~\ref{s:kahler-compact}, we need to describe the metrics in momentum
coordinates. For toric K\"ahler products, toric Calabi geometries and
orthotoric metrics, this has been done systematically by
E.~Legendre~\cite{Eveline} in her resolution of existence problem for extremal
metrics over \emph{equipoised} rational Delzant quadrilaterals.  Hence we
concentrate on the general regular ambitoric case.

We shall make essential use of the underlying geometry of regular ambitoric
structures. For this we recall (from~\cite{ACG1}) that there is a natural
PSL$(2,\R)$ gauge freedom in regular ambitoric structures, which may be
described in an invariant geometric way by viewing the codomain of the $x$ and
$y$ coordinates as a projective line $\Proj(W)$, where $W$ is a
$2$-dimensional real vector space on which we fix a (nonzero) area form
$\kappa$. We use bold font for the maps $\bx,\by\colon M \to\Proj(W)\sub
\cO(1)\otimes W$ and $x,y\colon M\to \R$ for their expression in an affine
coordinate on $\Proj(W)$; in the affine trivialization of $\cO(1)$, $\bx,\by$
are homogeneous coordinates corresponding to the inhomogeneous coordinates
$x,y$.

The quadratic form $q$ in~\eqref{g-pm}--\eqref{J-pm} is naturally an element
of $S^2W^*$ (i.e., an algebraic section of $\cO(2)\to \Proj(W)$) and the Lie
algebra $\tor$ of the torus is the subspace $S^2_{0,q}W^*$ orthogonal to $q$
with respect to the inner product $\kappa\otimes\kappa$ on $W^*\otimes W^*$
(which restricts to the polarization of the discriminant on $S^2W^*$). Note
that $S^2W^*\cong\mathfrak{sl}(W)$ has a Lie algebra structure (the Poisson
bracket, or Wronskian) and the Poisson bracket with $q$ induces an isomorphism
$\mathrm{ad}_q\colon S^2W^*/\spn{q}\to S^2_{0,q}W^*$. Following~\cite{ACG1},
we distinguish these isomorphs of $\tor$ by writing $\taumap=
\mathrm{ad}_q(\ang)$ where $\taumap$ and $\ang$ take values in
$S^2_{\smash{0,q}}W^*$ and $S^2W^*/\spn{q}$ respectively (modulo corresponding
lattices). For $\bz\in W$, we denote by $\bz^\flat=\kappa(\bz,\cdot)$ the
corresponding element of $W^*$, using ${}^\sharp$ for the inverse isomorphism.

\begin{defn}\label{rough} A regular ambitoric structure is
said to be of \emph{elliptic}, \emph{parabolic} or \emph{hyperbolic} type if
$q$ has (respectively) zero, one or two distinct real roots (on $\Proj(W)$).
\end{defn}

The spaces $\torh_\pm$ of hamiltonian generators of the torus action with
respect to $\omega_\pm$ are readily computed using~\eqref{omega-pm}
(see~\cite[(24)--(25)]{ACG1}); these yield the affine structures
$\iota_\pm(1)\in \torh_\pm$ and the natural momentum maps $\mu^\pm\colon M\to
\torh_\pm{}^{\!*}$ as functions of $\bx$ and $\by$.

\smallbreak\noindent \textbf{Negative structures}. We identify $\torh_-$ with
$S^2_{0,q} W^*\oplus \Wedge^2 W^* \sub W^*\otimes W^*$ and
\begin{equation}\label{mu-minus}
\mu^-(\bx,\by) = -\frac{\bx\otimes\by \mod q^\sharp}{\kappa(\bx,\by)}
\qquad\text{in}\qquad \torh_-{}^{\!*} \cong W\otimes W/\spn{q^\sharp},
\end{equation}
where $q^\sharp\in S^2W$ is dual to $q$ using $\kappa$ (i.e.,
$q^\sharp(\bz^\flat)=q(\bz)$). Modulo a sign convention, the affine structure
is $\kappa$, with $\ip{\mu^-,\kappa}=1$.  For $\gavec\in W$ (or in
$\Proj(W)\sub\cO(1)\otimes W$), we define $\pl\gavec=\gavec^\flat\otimes
q(\gavec,\cdot)$ and $\pr\gavec =q(\gavec,\cdot)\otimes \gavec^\flat$, which
are decomposables in $W^*\otimes W^*$ orthogonal to $q$, and hence in
$\torh_-$.  The contractions of $\pl\gavec$ and $\pr\gavec$ with
$\mu^-(\bx,\by)$ vanish when $\bx=\gavec$ or $\by=\gavec$ respectively. In
affine coordinates on $\Proj(W)$,
\begin{align*}
\pl\ga (x,y)&=(x-\ga)q(\ga,y),& \pr\ga (x,y)&=q(x,\ga)(y-\ga),\\
\ip{\mu^-(x,y),\pl\ga}&=-\frac{(x-\ga)q(y,\ga)}{x-y}, &
\ip{\mu^-(x,y),\pr\ga}&=-\frac{(y-\ga)q(x,\ga)}{x-y}.
\end{align*}
Hence $\pl\ga$ and $\pr\ga$ are dual (i.e., normal) to level surfaces of $x$
and $y$ respectively.

\smallbreak\noindent \textbf{Positive structures}. Here we have $\torh_+\cong
S^2W^*$ and
\begin{equation}\label{mu-plus}
\mu^+(\bx,\by)  = -\frac{\bx\odot \by}{q(\bx,\by)}\qquad\text{in}\qquad
\torh_+{}^{\!*}\cong W\otimes W/\Wedge^2 W\cong S^2W.
\end{equation}
The affine structure is $q$, with $\ip{\mu^+,q}=1$. Decomposables now all have
the form $\w\gavec=\gavec^\flat\otimes\gavec^\flat\in\torh_+ =S^2W^*\sub
W^*\otimes W^*$, for some $\gavec\in W$ (or $\Proj(W)\sub\cO(1)\otimes W$).
The contraction of $\w\gavec$ with $\mu^+(\bx,\by)$ vanishes when $\bx=\gavec$
or $\by=\gavec$. In affine coordinates on $\Proj(W)$, $\w\ga$ polarizes the
quadratic $\w\ga(z)=(z-\ga)^2$, with
\begin{equation*}
\w\ga(x,y)=(x-\ga)(y-\ga),\qquad
\ip{\mu^+(x,y),\w\ga}= -\frac{(x-\ga)(y-\ga)}{q(x,y)}.
\end{equation*}
Hence $\w\ga$ are dual (i.e., normal) to level surfaces of $x$ and $y$.

\medbreak
The constants in $\torh_\pm$ are elements of $\spn{\kappa}=\Wedge^2W^*$ and
$\spn{q}\sub S^2W^*$ respectively, and the map $w\mapsto
p=\mathrm{ad}_q(w)\colon S^2W^*\to S^2_{0,q}W^*$ sends a Killing potential
with respect to $\omega_+$ to a Killing potential for the same vector field
with respect to $\omega_-$. We denote by $K^{(p)}$ the corresponding vector
field on $M$.

It is now straightforward to compute the torus metrics ${\mathbf H}^\pm$ of
$g_\pm$:
\begin{align}
{\mathbf H}^-_{\mu^-(x,y)}(p,\tilde p)&=g_- (K^{(p)}, K^{(\tilde p)})
=\frac{A(x) p(y) \tilde p(y) + B(y) p(x) \tilde p(x)}{(x-y)^3\, q(x, y) },\\
{\mathbf H}^+_{\mu^+(x,y)}(p,\tilde p)&=g_+ (K^{(p)}, K^{(\tilde p)})
=\frac{A(x) p(y) \tilde p(y) + B(y) p(x) \tilde p(x)}{(x-y)\, q(x, y)^3 },
\end{align}
where $p, \tilde p$ in $S^2_{0,q} W^*\cong \tor$.  Up to a constant multiple
(depending on a choice of basis for $\tor^*$), we have
\begin{equation}\label{detH}
\det {\mathbf H}^-_{\mu^-(x,y)} = \frac{A(x) B(y)}{(x-y)^4},\qquad\qquad
\det {\mathbf H}^+_{\mu^+(x,y)} = \frac{A(x) B(y)}{q(x,y)^4}.
\end{equation}

\subsection{Orbifold compactifications of ambitoric K\"ahler surfaces}
\label{s:ambi-orbifolds}

The existence of an ambitoric structure on a compact $4$-orbifold $M$ places
strong (and well-known) constraints on the topology of $M$.

\begin{prop}\label{pr:quad} Let $M$ be a compact connected $4$-orbifold with
an effective action of a $2$-torus $\T$, and suppose that
$(g_\pm,J_\pm,\omega_\pm)$ is an ambitoric structure on $M$ with respect to
the derivative $\tor\hookrightarrow C^\infty(M,TM)$ of the $\T$ action.  Then
the images of the momentum maps of the $\T$ action \textup(with respect to
$\omega_+$ and $\omega_-$\textup) are quadrilaterals \textup(i.e.,
$b_2(M)=2$\textup). In particular, if $M$ is smooth, then for some $k\in\N$,
$(M,J_+)$ and $(M,J_-)$ are biholomorphic to a Hirzebruch surface $\Proj(\cO
\oplus \cO(k))\to \C P^1$.
\end{prop}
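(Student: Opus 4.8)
The plan is to reduce the whole statement to the single computation $b_2(M)=2$, exploiting the fact that \emph{both} $(M,J_+,\omega_+)$ and $(M,J_-,\omega_-)$ are compact toric K\"ahler orbifolds under the same torus $\T$. First I would invoke the symplectic theory of Section~\ref{s:toric-orb}: since $\dim\tor=2$ and the $\T$-action is effective, each momentum image $\Delta_\pm:=\mu^\pm(M)\subset\torh_\pm^*$ is a compact convex rational Delzant polygon, and for such a toric orbifold the number of its edges (equivalently, of its vertices, equivalently of the $\T$-fixed points) equals $b_2(M)+2$. Thus the assertion that $\Delta_+$ and $\Delta_-$ are quadrilaterals is \emph{literally} the assertion $b_2(M)=2$, and it is enough to compute this one number.

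The key input is that $J_+$ and $J_-$ induce \emph{opposite} orientations of $M$. Write $b_+(M),b_-(M)$ for the maximal dimensions of subspaces of $H^2(M,\R)$ on which the cup-product form is, respectively, positive and negative definite, computed for the orientation fixed by $J_+$; then $b_2(M)=b_+(M)+b_-(M)$. Now $(M,J_+)$ is a compact toric complex orbifold, hence rational, so it carries no nonzero holomorphic $2$-forms and $h^{2,0}(M,J_+)=0$; orbifold K\"ahler Hodge theory then gives $b_+(M)=1+2h^{2,0}(M,J_+)=1$, with $[\omega_+]$ spanning the positive line. Applying the identical reasoning to the toric orbifold $(M,J_-)$ shows that $[\omega_-]$ spans the positive line for the orientation fixed by $J_-$; but this orientation is the \emph{reverse} of the one fixed by $J_+$, so $[\omega_-]$ spans the negative line there and $b_-(M)=1$. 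Hence $b_2(M)=1+1=2$, and both momentum images are quadrilaterals. (As a consistency check, this is exactly why weighted projective planes, with $b_2=1$, cannot be globally ambitoric, as noted above.)

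For the final clause, assume $M$ is smooth. Then $(M,J_+)$ is a smooth compact toric complex surface whose Delzant quadrilateral $\Delta_+$ satisfies the unimodularity (smoothness) condition at each vertex; the classification of smooth compact toric surfaces of Picard rank two shows that, up to an $\mathrm{SL}(2,\Z)\ltimes\R^2$ equivalence and rescaling, $\Delta_+$ is a trapezium realizing a Hirzebruch surface, so $(M,J_+)\cong\Proj(\cO\oplus\cO(k))\to\C P^1$ for some $k\in\N$ (the case $k=0$ being $\C P^1\times\C P^1$). The same argument applied to $J_-$ yields the analogous conclusion for $(M,J_-)$.

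The step I expect to demand the most care is the orbifold incarnation of the Hodge-theoretic input: one must verify that toric K\"ahler orbifolds satisfy $h^{2,0}=0$ and that the relation $b_+=1+2h^{2,0}$ (together with $b_2=b_++b_-$) holds on the possibly singular underlying space, keeping the two orientations matched correctly throughout. An alternative route, closer in spirit to the local classification recalled above, is to argue directly on the polytope: in each of the three local normal forms the facets of $\Delta_+$ (resp.\ $\Delta_-$) are exactly the extremal level sets of the two invariant functions $x$ and $y$ --- the level surfaces to which the normals $\w\ga$ (resp.\ $\pl\ga,\pr\ga$) are dual --- so $x$ and $y$ each contribute a single pair of opposite edges. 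On that route the real difficulty is global: one must show that these local pictures assemble across the degenerate locus into one polygon carrying no further edges.
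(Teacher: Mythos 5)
Your proof is correct and follows essentially the same route as the paper's: both establish $h^{2,0}=0$ for the toric complex structures, deduce $b_+(M)=b_-(M)=1$ from the two oppositely oriented K\"ahler structures, and then conclude via the standard fact that the rational Delzant polygon of a compact toric $4$-orbifold has $b_2(M)+2=4$ edges. The only (cosmetic) difference is that you package the Hodge-theoretic step as topological invariance of $b_\pm$ plus $b_+=1+2h^{2,0}$ applied to each orientation separately, whereas the paper identifies the harmonic $2$-forms directly as constant linear combinations of $\omega_+$ and $\omega_-$ using the conformal equivalence of $g_+$ and $g_-$ --- the same computation in different clothing (yours, incidentally, never needs the conformal equivalence, only that both structures are K\"ahler with opposite orientations).
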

\begin{proof} If $M$ is a compact K\"ahler surface admitting a holomorphic
hamiltonian action of a $2$-torus $\T$ and $M^0$ is the union of the generic
$\T$-orbits, then the anticanonical bundle has a holomorphic section with
zeroset $M\setminus M^0$. The canonical bundle therefore has no holomorphic
sections, and so $h^{2,0}=h^{0,2}=0$. In the ambitoric case, the only
nonvanishing second deRham cohomology is in the intersection of $H^{1,1}$ with
respect to $J_+$ and $J_-$, hence represented by a constant linear combination
of the harmonic forms $\omega_+$ and $\omega_-$ (since $g_+$ and $g_-$ are
conformally equivalent). It follows that $M$ has second Betti number $b_2(M)=2$
($b_+(M)=b_-(M)=1$).  Standard results about compact simply connected
$4$-orbifolds with $2$-torus actions (e.g.~\cite{BGMR}) then imply that the
rational Delzant polygons have $b_2(M)+2=4$ sides.
\end{proof}
\begin{rem} Conversely, it is well-known that any Hirzebruch surface $M$
admits compatible ambitoric structures: indeed it admits toric \emph{extremal}
metrics of Calabi type in each K\"ahler class~\cite{Cal1}, where the base
metric $g_\Sigma$ is the Fubini--Study metric on $\C P^1$---such metrics are
ambitoric by \cite[Proposition~9]{ACG1}.  Furthermore, any ambik\"ahler
structure $(g_\pm,\omega_\pm)$ on $M$ with $g_+$ or $g_-$ extremal is of this
type: if $g_+$ is extremal, then by uniqueness for extremal K\"ahler metrics
in their K\"ahler class~\cite{CT}, we may assume $g_+$ is of Calabi type,
hence ambitoric with respect to a negative complex structure ${\tilde J}_-$.
However, $g_+$ cannot have selfdual Weyl tensor, so $J_-$ must equal $\pm
{\tilde J}_-$.
\end{rem}

Note that we have assumed above that both K\"ahler metrics $(g_\pm,
J_\pm,\omega_\pm)$ are globally defined on $M$, not just at points in generic
$\T$-orbits. In the following we shall weaken this assumption slightly.

\begin{defn}\label{compactification} An \emph{ambitoric compactification}
is a compact connected oriented $4$-orbifold $M$ with an effective action of a
$2$-torus $\T$ such that on the (dense) union $M^0$ of the free $\T$-orbits,
there is an ambitoric structure $(g_\pm, J_\pm,\omega_\pm,\tor)$ (with $\tor$
the Lie algebra of $\T$) for which at least one of the K\"ahler metrics
extends smoothly to a toric K\"ahler metric on $(M,\T)$. An ambitoric
compactification is \emph{regular} if the ambitoric structure on $M^0$ is
regular with $(x,y)$-coordinates that are globally defined on $M^0$.
\end{defn}

Henceforth, we consider only regular ambitoric compactifications (without loss
of generality if we are interested in extremal ambitoric metrics, as
Theorem~\ref{thm:stability} below will show). We say the ambitoric
compactification $M$ is positive and/or negative if $g_+$ and/or $g_-$ extends
smoothly to $M$. ($M$ can be both positive and negative.)

By Proposition~\ref{p:toric-ccs}, if $g_\pm$ compactifies, the
determinant~\eqref{detH} of $\mathbf{H}^\pm$ must be smooth on $M$, positive
on $M^0$, and vanish on (the pre-image of) the boundaries of
$\Delta_\pm$. Hence the image of $M^0$ under $(x,y)$ must be a domain
$D^0:=(\al_\1,\al_\2)\times (\be_\1,\be_\2)$ where $A(z)$ and $B(z)$ are
positive on $(\al_\1,\al_\2)$ and $(\be_\1,\be_\2)$ respectively, with zeros
at the endpoints; furthermore, if $g_+$ and/or $g_-$ are globally defined,
then $q(x,y)\neq 0$ and/or $x-y\neq 0$ on the closure $D=[\al_\1,\al_\2]\times
[\be_\1,\be_\2]$ of $D^0$. If both $g_+$ and $g_-$ are globally defined,
$\al_\1>\be_\2$ and $\Delta_\pm$ are both quadrilaterals.  However, in order
to apply limiting arguments, we also need to allow $\be_\2=\al_\1$ when
$\Delta_+$ is a simplex and $M$ is a weighted projective plane. In this case
$g_-$ does not compactify.

The polytopes $\Delta_\pm\sub \torh_\pm{}^{\!*}$ are the images of
$D\sub\Proj(W)\times\Proj(W)$ (using the chosen affine chart on $\Proj(W)$)
under the formulae~\eqref{mu-minus}--\eqref{mu-plus} for the momenta
$\mu^\pm(\bx,\by)$. Since the level surfaces $x=\ga$, $y=\ga$ have normals
$\pl\ga$ and $\pr\ga$ (respectively) in the negative case, and $\w\ga$ in
the positive case, we can take $\ga=\al_\1,\al_\2,\be_\1$ and $\be_\2$ to
determine straightforwardly that
\begin{align*}
\Delta_-=\Bigl\{\xi\in \torh_-{}^{\!*}:\ip{\xi,\kappa}=1,\,
\ip{\xi,\pl{\al_\1}}\leq 0,\, &\ip{\xi,\pl{\al_\2}}\geq 0,\\
\ip{\xi,\pr{\be_\1}}\leq 0,\, &\ip{\xi,\pr{\be_\2}}\geq 0\Bigr\};\\
\Delta_+=\Bigl\{\xi\in \torh_+{}^{\!*}:\ip{\xi,q}=1,\,
\ip{\xi,\w{\al_\1}}\leq0,\,
&\ip{\xi,\w{\al_\2}}\geq0,\\
\ip{\xi,\w{\be_\1}}\geq0,\,&\ip{\xi,\w{\be_\2}}\leq0\Bigr\}.
\end{align*}

Normals to $\Delta_\pm$ may be written $u^\pm_{\al,k}=\p{\al_k}/r^\pm_{\al,k}$
and $u^\pm_{\be,k}=\p{\be_k}/r^\pm_{\be,k}$ for $k=\1,\2$ and constants
$r^\pm_{\al,k}$ and $r^\pm_{\be,k}$, where $\p\gavec=\gavec^\flat\odot
q(\gavec,\cdot)\in \tor$ has the affine expression
\begin{equation*}
\p\ga(x,y) = \tfrac12 \bigl(q(x,\ga)(y-\ga)+(x-\ga)q(y,\ga)\bigr).
\end{equation*}
The boundary conditions ${\mathbf H}^\pm_{\mu^\pm(\al_k,y)}
(u^\pm_{\al,k},\cdot)=0={\mathbf H}^\pm_{\mu^\pm(x,\be_k)}
(u^\pm_{\be,k},\cdot)$ (see \eqref{toricboundary}) are equivalent to
$A(\al_k)=0=B(\be_k)$, and the remaining boundary conditions simplify to
$A'(\al_k)=2r^\pm_{\al,k}$ and $B'(\be_k)=\mp 2r^\pm_{\be,k}$, using e.g.,
\begin{equation*}
\d{\mathbf H}^+_{\mu^+(\al_\1,y)}(\p{\al_\1},\p{\al_\1})
=\frac{A'(\al_\1) (\p{\al_\1}(y))^2}{(\al_\1-y) q(\al_\1,y)^3} \d x
=\frac{A'(\al_\1) (\al_\1-y)}{q(\al_\1, y)} \d x,
\end{equation*}
which is equal to $-\frac 12 A'(\al_\1) \p{\al_\1}$. We deduce that
$r_{\al,k}:=r^+_{\al,k}=r^-_{\al,k}$ and $r_{\be,k}:=r^+_{\be,k}=
-r^-_{\be,k}$. The construction of (simply connected) regular ambitoric
compactifications is now completed by ensuring the normals are inward, and
satisfy the integrality condition that they span a lattice.

\begin{prop}\label{p:compactification}  Any compact, simply connected regular
ambitoric compactification is determined by the following data\textup:
\begin{bulletlist}
\item real numbers $\al_k, \be_k, r_{\al,k}, r_{\be,k}$ $(k=\1,\2)$, subject
to the inequalities
\begin{equation*}
\be_\1 < \be_\2 \leq \al_\1 < \al_\2, \qquad
r_{\al,\1} < 0 < r_{\al,\2}, \quad r_{\be,\1} > 0 > r_{\be,\2},
\end{equation*}
and the integrality condition that, with $u_{\al,k}=\p{\al_k}/r_{\al,k}$ and
$u_{\be,k}=\p{\be_k}/r_{\be,k}$,
\begin{equation}\label{rational}
\spns_{\Z} \{u_{\al,\1}, u_{\al,\2}, u_{\be,\1}, u_{\be,\2}\}\cong \Z^2.
\end{equation} 
\item a quadratic $q(z)$ and two smooth functions of one variable, $A(z)$ and
$B(z)$, satisfying the positivity conditions that $q(x,y)>0$ on
$D^0=(\al_\1,\al_\2)\times (\be_\1,\be_\2)$, $A(z)>0$ on
$(\al_\1,\al_\2)$ and $B(z)>0$ on $(\be_\1, \be_\2)$, and the boundary
conditions that
\begin{equation}\label{AB-boundary}
A(\al_k)=0=B(\be_k),\qquad A'(\al_k)=-2r_{\al,k}, \quad B'(\be_k)
=2r_{\be,k} \qquad\quad (k=\1,\2).
\end{equation}
\end{bulletlist}
It is positive if $q(x,y)>0$ on the closure $D$ of $D^0$, and negative if
$\be_\2<\al_\1$.
\end{prop}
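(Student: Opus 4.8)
The plan is to extract the listed data from the boundary analysis carried out immediately before the statement, and then to run that analysis in reverse to reconstruct the compactification from the data. Throughout, the governing tool is Proposition~\ref{p:toric-ccs}, applied to whichever of the torus metrics $\mathbf{H}^\pm$ extends smoothly to $M$; the formulas for $\mu^\pm(\bx,\by)$, for the polytopes $\Delta_\pm$, for the normals $\p\ga$, and for $\mathbf{H}^\pm$ and its determinant \eqref{detH} are all already in hand from \S\ref{s:momenta} and the preceding paragraphs.

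For necessity I would argue as follows. Since the $(x,y)$-coordinates are globally defined on $M^0$ and $\det\mathbf{H}^\pm$ must, by Proposition~\ref{p:toric-ccs}, be smooth on $M$, positive on $M^0$, and vanish over $\partial\Delta_\pm$, the image of $M^0$ is forced to be a rectangle $D^0=(\al_\1,\al_\2)\times(\be_\1,\be_\2)$ on which $A$ and $B$ are positive with zeros at the endpoints; this already gives $\al_\1<\al_\2$ and $\be_\1<\be_\2$, while the standing requirement $x>y$ of the regular normal form gives $\be_\2\leq\al_\1$. The boundary values \eqref{toricboundary} reduce, via the computation performed just before the statement, to $A(\al_k)=0=B(\be_k)$ together with $A'(\al_k)=-2r_{\al,k}$ and $B'(\be_k)=2r_{\be,k}$, i.e.\ \eqref{AB-boundary}; moreover the zeros of $A$ and $B$ are simple, since $A'(\al_k)=0$ would force $r_{\al,k}=0$ and hence an undefined normal $u_{\al,k}=\p{\al_k}/r_{\al,k}$. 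Positivity of $A$ on $(\al_\1,\al_\2)$ then forces $A'(\al_\1)>0>A'(\al_\2)$, i.e.\ $r_{\al,\1}<0<r_{\al,\2}$, and likewise $r_{\be,\1}>0>r_{\be,\2}$; these are exactly the signs making $u_{\al,k}$ and $u_{\be,k}=\p{\be_k}/r_{\be,k}$ point into $\Delta_\pm$. Finally, by the earlier remark characterizing simply connected rational Delzant polytopes, simple connectivity of $M$ amounts to taking the lattice to be $\spns_\Z\{u_{\al,\1},u_{\al,\2},u_{\be,\1},u_{\be,\2}\}$, which must then have rank two: this is precisely \eqref{rational}.

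For sufficiency I would reverse the construction. Given the data, the momentum formulas define candidate polytopes $\Delta_\pm$, and the inequalities together with \eqref{rational} guarantee that $(\Delta_\pm,\Lab)$ are genuine simply connected rational Delzant quadrilaterals with the prescribed inward normals. Defining $\mathbf{H}^\pm$ by the formulas of \S\ref{s:momenta}, one verifies the three bullet conditions of Proposition~\ref{p:toric-ccs}: the boundary values are again equivalent to \eqref{AB-boundary} by the same computation, while smoothness and positivity on faces follow from the separated form $A(x)B(y)$ and the interior positivity of $A$, $B$ (and of $q(x,y)$ on $D^0$), so that $\mathbf{H}^\pm$ degenerates precisely in the normal direction along each facet. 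Since the ambitoric structure is already K\"ahler on $M^0$, this yields a smooth toric K\"ahler orbifold.

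The positive/negative dichotomy is then immediate from \eqref{detH}: $g_+$ extends across the closure $D$ exactly when $q(x,y)^{-4}$ remains finite there, i.e.\ when $q(x,y)>0$ on $D$ (it is already positive on $D^0$), and $g_-$ extends exactly when $(x-y)^{-4}$ remains finite, i.e.\ when $x>y$ strictly on $D$, which is the condition $\be_\2<\al_\1$. I expect the genuine technical obstacle to lie in the sufficiency step, specifically in verifying that the reconstructed $\mathbf{H}^\pm$ satisfies \emph{all} of Proposition~\ref{p:toric-ccs}---in particular smoothness at the corners of $\Delta_\pm$, where two facets meet, and the positivity condition on the one-dimensional faces---whereas the necessity direction is little more than a repackaging of the boundary computations already in place.
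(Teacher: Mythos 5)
Your argument is essentially the paper's own: the authors prove this proposition through the discussion preceding its statement, deriving $D^0=(\al_\1,\al_\2)\times(\be_\1,\be_\2)$ and the positivity of $A,B$ from Proposition~\ref{p:toric-ccs} applied to $\det\mathbf{H}^\pm$ in \eqref{detH}, reducing the first-order conditions \eqref{toricboundary} to \eqref{AB-boundary} by the same explicit computation, and completing the converse by ``ensuring the normals are inward'' and span a lattice, exactly as you do. Your extra care about the sufficiency direction (verifying all three conditions of Proposition~\ref{p:toric-ccs}, including at corners) and your reading of the positive/negative dichotomy from \eqref{detH} are likewise in line with the paper's treatment, so this is a correct reconstruction of the same proof.
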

\begin{rem}\label{rational-polytope} A particular case where \eqref{rational}
holds automatically is when $q$ has rational coefficients and $\al_k, \be_k$
and $r_{\al,k}, r_{\be,k}$ are all rational: since the condition
\eqref{rational} is clearly invariant under an overall multiplication of
$r_{\al,k}$ and $r_{\be,k}$ by a nonzero real constant, we can choose this
constant such that $u_{\al,k}$ and $u_{\be,k}$ have integer coordinates.
\end{rem}

\begin{rem}\label{zero-measure} One can allow some (but not all) of $r_{\al,k}$
and $r_{\be,k}$ in Proposition~\ref{p:compactification} to be zero. In terms
of the theory reviewed in section~\ref{s:toric-orb}, this is a limiting case
in which some of the normals $u_j$ are infinite, and hence the measure $\d\nu$
on the corresponding facet $\Fa_j$ is zero. On such an ``omitted'' facet
$\Fa_j$, the first order boundary conditions of Proposition~\ref{p:toric-ccs}
become
\begin{equation}\label{complete-boundary}
\mathbf{H}_{\xi}({\tilde u}_j, \cdot)=0 \quad {\rm and} \quad
(\d\mathbf{H})_{\xi}({\tilde u}_j, {\tilde u}_j)=0, \quad \forall \xi\in\Fa_j,
\end{equation}
where ${\tilde u}_j$ is \emph{any} nonzero normal vector to $\Fa_j$. This is
the setting of \cite[Conjecture 7.2.3]{Donaldson1} and \cite[\S 3.1]{Sz2} and
yields complete K\"ahler metrics on the complement of a toric divisor (the
inverse image of the omitted facets) in a compact toric orbifold $M$.

Proposition~\ref{p:compactification} extends the characterization of regular
ambitoric compactifications by allowing complete ends of this form. When
$r_{\al,k}$ or $r_{\be,k}$ is zero, the boundary
conditions~\eqref{AB-boundary} apply without change, but in~\eqref{rational},
we replace $u_{\al,k}=\p{\al_k}/r_{\al,k}$ or $u_{\be,k}=\p{\be_k}/r_{\be,k}$
by some other multiples of $\p{\al_k}$ or $\p{\be_k}$.
\end{rem}

\subsection{Factorization structures for triangles and quadrilaterals}

In the previous two subsections, we found that in an ambitoric
compactification, the coordinate lines (in particular, the facets) in
$\Delta\sub\Proj(\torh^*)$ were dual to (projective) normals in $\Proj(\torh)$
which were decomposable with respect to a inclusion of $\torh$ into a tensor
product of $2$-dimensional vector spaces $W^*\otimes W^*$. In order to obtain
a converse, and determine when a rational Delzant quadrilateral arises from an
ambitoric compactification, we formalize this phenomenon by introducing
($2$-dimensional) factorization structures; for a more general context see
Appendix~\ref{s:fs}.

Throughout this section, we adopt the notation of \S\ref{s:quad} for
$(\Delta,\Lab)$ in $\Proj(\torh^*)$: the affine normals will be indexed
$L_{\be,\1}$, $L_{\al,\1}$, $L_{\be,\2}$, $L_{\al,\2}$. We also allow the
quadrilateral to degenerate to a triangle with $L_{\al,\1}=L_{\be,\2}$.

\begin{defn}\label{factorization} Let $(\Delta,\Lab)$ be a rational Delzant
quadrilateral or a triangle in $\Proj(\torh^*)$, let $W_1,W_2$ be
$2$-dimensional vector spaces, and let $S\colon \Proj(W_1)\times\Proj(W_2)\to
\Proj(W_1\otimes W_2)$ be the Segre embedding, sending $([w_1],[w_2])$ to
$[w_1\otimes w_2]$. A \emph{factorization structure} is a rational map
$S_\fs\colon \Proj(W_1)\times\Proj(W_2)\dashrightarrow \Proj(\torh^*)$
obtained by composing $S$ with a projection $\Proj(W_1\otimes W_2)
\dashrightarrow \Proj(\torh^*)$ dual to a linear injection $\fs\colon \torh\to
W_1^*\otimes W_2^*$.
\begin{numlist}
\item $S_\fs$ is a \emph{Segre factorization structure} if the image of $\fs$
is ${\gavec_1}^0\otimes W_2^*+W_1^*\otimes {\gavec_2}^0 \subset W_1^*\otimes
W_2^*$, where ${\gavec_j}\sub W_j^*$ is the annihilator of some $\gavec_j\in
W_j$(for $j=1,2$).

\begin{center}
\includegraphics[width=0.8\textwidth]{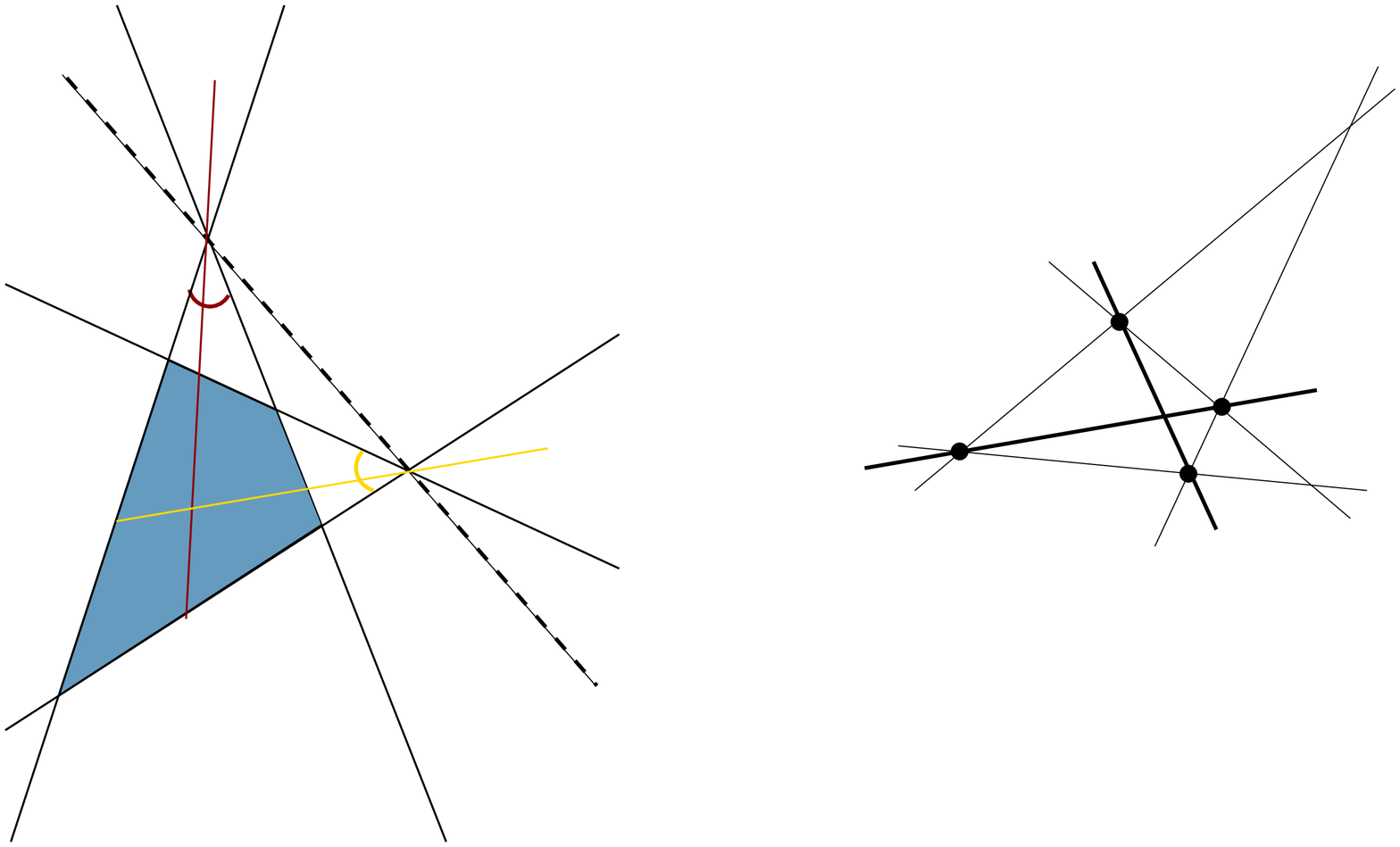}

Figure 2. Segre factorization in $\Proj(\torh^*)$ and dual conic in
$\Proj(\torh)$.
\end{center}

\item $S_\fs$ is a \emph{Veronese factorization structure} if there is an
isomorphism $W_1\cong W_2$ (so we drop the index) such that the image of $\fs$
is $S^2W^*\subset W^*\otimes W^*$.

\begin{center}
\includegraphics[width=0.8\textwidth]{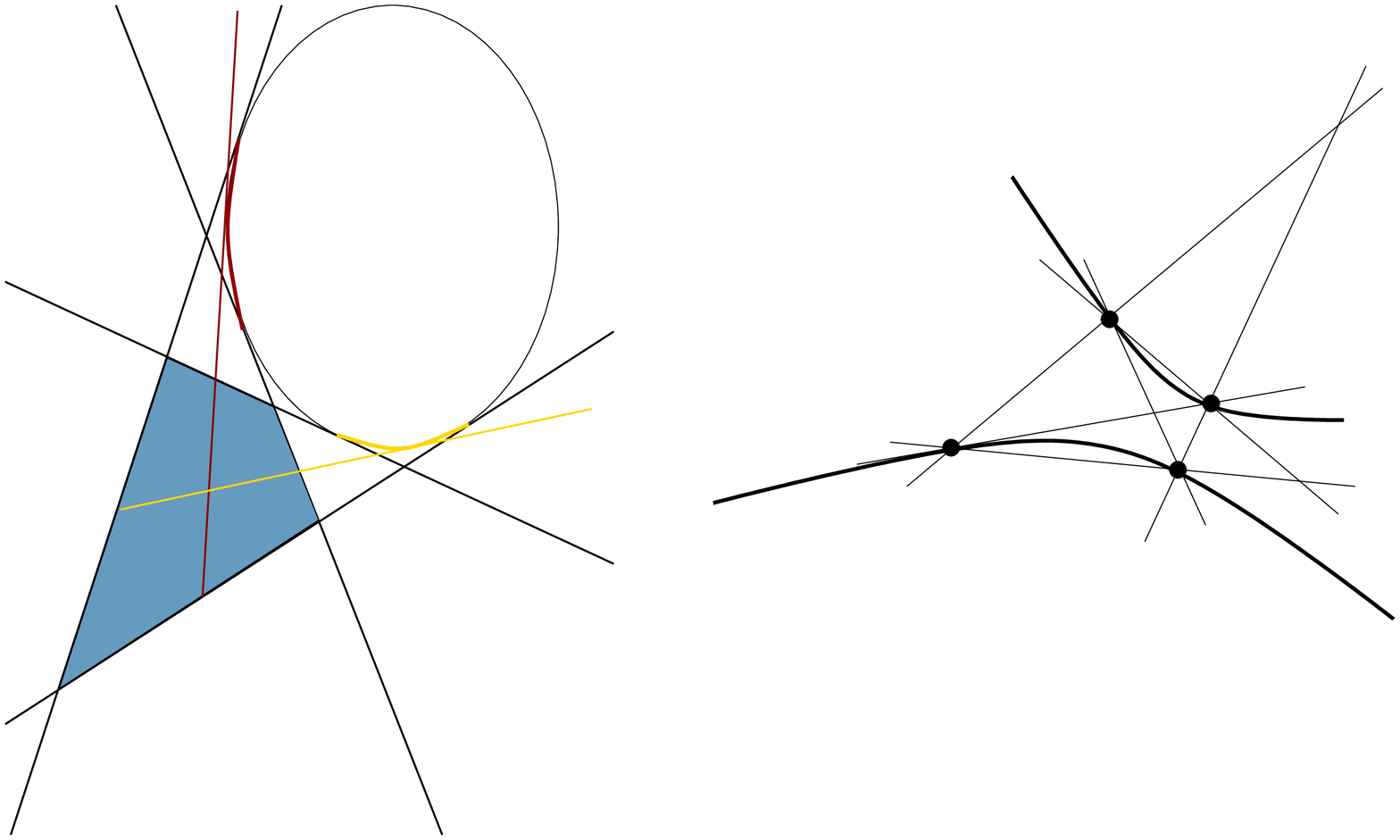}

Figure 3. Veronese factorization in $\Proj(\torh^*)$ and dual conic in
$\Proj(\torh)$.
\end{center}
\end{numlist}
We say $S_\fs$ is \emph{compatible} with $(\Delta,\Lab)$ if $S_\fs$ maps a
product $I_1\times I_2$ of closed intervals in $\Proj(W_1)\times \Proj(W_2)$
bijectively onto $\Delta$.
\end{defn}

The image of the Segre embedding $S$ is a nonsingular ruled quadric surface,
and the pullback by $\fs$ is a hyperplane section. This is a conic $\cC$ in
$\Proj(\torh)$ which we call the \emph{induced conic} of the factorization
structure; it is a line-pair in the Segre case, and nonsingular (and nonempty)
in the Veronese case. Conversely, any two nonsingular ruled quadric surfaces
are projectively equivalent, as are any two line-pairs, or any two nonsingular
nonempty conics, in a projective plane. Hence a Segre or Veronese
factorization structure is determined up to isomorphism by a line-pair or
nonsingular nonempty conic $\cC$ in $\Proj(\torh)$.

Such a conic $\cC$ in $\Proj(\torh)$ has a dual $\cC^*$ in $\Proj(\torh^*)$:
in the Segre case (Figure 2), $\cC^*$ is a ``double'' line (dual to the
vertex of the line-pair $\cC$) with two marked points (dual to the two lines),
while in the Veronese case (Figure 3), $\cC^*$ is the conic of tangent
lines to $\cC$. The coordinate lines of the factorization structure (whose
duals, i.e., projective normals, are the points of $\cC$) are
\begin{itemize}
\item the lines through the two marked points on $\cC^*$ in the Segre case;
\item the tangent lines to $\cC^*$ in the Veronese case.
\end{itemize}

For compatibility with $(\Delta,\Lab)$, the projective normals $[L_{\be,\1}]$,
$[L_{\al,\1}]$, $[L_{\be,\2}]$, and $[L_{\al,\2}]$ must be on $\cC$ (so that
the facets of $\Delta$ are on coordinate lines). This is not quite sufficient:
in the Segre case, $\cC^*$ must not meet the interior of $\Delta$, while in
the Veronese case, $\Delta$ must be entirely in the ``exterior'' (union of
tangent lines) of $\cC^*$. Using the projectivized dual cone $\Delta^*\sub
\Proj(\torh)$, the following ensures both requirements.

\begin{cond}\label{conic-meets} $\cC$ meets the interior of $\Delta^*$.
\end{cond}
When $\Delta$ is a quadrilateral (bounded by four lines in general position),
there is a pencil of conics through the four projective normals, and $\cC$ can
be any conic in the pencil satisfying Condition~\ref{conic-meets}.

To complete our analysis, we need to discuss what happens to the affine
structure $\iota(1)\in \torh$ under the factorization structure. In the Segre
case, there are three possibilities for $\fs(\iota(1))$: if $\fs(\iota(1))\in
{\gavec_1}^0\otimes{\gavec_2}^0$, then $\cC^*$ is the line at infinity and
$\Delta$ is a parallelogram; otherwise $\fs(\iota(1)$ is either decomposable,
in which case $\Delta$ is a trapezium (two parallel sides), or indecomposable,
in which case $\Delta$ has no parallel sides. In the Veronese case,
$\fs(\iota(1))=q\in S^2W^*$ and there are also three possibilities: $q$
may have positive, zero or negative discriminant.

\begin{prop}\label{p:existence} Let $(\Delta,\Lab)$ be a rational Delzant
quadrilateral, and $\cC$ a conic through the projective normals of $\Delta$
which satisfies Condition~\textup{\ref{conic-meets}}.  Then if $\cC$ is
nonsingular \textup(respectively $[\iota(1)]$ is not on $\cC$\textup), there
is a positive \textup(respectively negative\textup) ambitoric compactification
with rational Delzant polytope $\Delta$ and induced conic $\cC$.
\end{prop}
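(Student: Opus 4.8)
The plan is to read off, from the pair $(\Delta,\cC)$ together with the affine structure $\iota(1)$, all the data appearing in Proposition~\ref{p:compactification}, and then invoke that proposition. The guiding principle is that a factorization structure is determined up to isomorphism by its induced conic, so $\cC$ fixes the auxiliary vector space $W$, the quadratic $q$, and the four roots $\al_\1,\al_\2,\be_\1,\be_\2\in\Proj(W)$ of the coordinate lines through the projective normals.

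First I would fix $W$ and $q$. In the positive case $\cC$ is nonsingular, hence a Veronese conic; I identify $\torh$ with $S^2W^*$ so that $\cC$ becomes $\{[\w\gavec]:\gavec\in\Proj(W)\}$ and $\iota(1)$ becomes a quadratic $q\in S^2W^*$, whose discriminant is dictated by the position of $[\iota(1)]$ relative to $\cC$. In the negative case the hypothesis $[\iota(1)]\notin\cC$ is precisely what lets me realise $\torh$ as the hyperplane $q^\perp\subset W^*\otimes W^*$ with $\iota(1)\mapsto\kappa$: since $\kappa\in\Wedge^2W^*$ is indecomposable it never lies on the induced conic, so this is consistent, and $\cC$ may be nonsingular (elliptic or hyperbolic $q$) or a line-pair (parabolic $q$). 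In either case the four projective normals, which lie on $\cC$ by compatibility, pull back through the Veronese or Segre picture to four points of $\Proj(W)$; choosing an affine coordinate turns these into real numbers, and the cyclic order of the facets $L_{\be,\1},L_{\al,\1},L_{\be,\2},L_{\al,\2}$ together with compatibility of $S_\fs$ with $\Delta$ lets me label them as $\al_\1,\al_\2,\be_\1,\be_\2$ with $\be_\1<\be_\2\le\al_\1<\al_\2$.

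With $W,q$ and the four endpoints in hand, the remaining data are elementary. The inverse scales $r_{\al,k},r_{\be,k}$ come from the labels $\Lab$, and the sign pattern $r_{\al,\1}<0<r_{\al,\2}$, $r_{\be,\1}>0>r_{\be,\2}$ is forced by demanding that $u_{\al,k}=\p{\al_k}/r_{\al,k}$ and $u_{\be,k}=\p{\be_k}/r_{\be,k}$ point into $\Delta$. The functions $A$ and $B$ are then obtained by interpolation: any smooth function on $[\al_\1,\al_\2]$ that vanishes at the endpoints with $A'(\al_\1)=-2r_{\al,\1}>0$ and $A'(\al_\2)=-2r_{\al,\2}<0$, and is positive between them, satisfies \eqref{AB-boundary}, and similarly for $B$; such functions plainly exist. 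The integrality condition \eqref{rational} is arranged by an overall rescaling of the $r$'s in the rational case (cf.~Remark~\ref{rational-polytope}).

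I expect the real work to be the global positivity that separates the two cases and places us genuinely inside the hypotheses of Proposition~\ref{p:compactification}. For the positive conclusion I must show $q(x,y)>0$ on the \emph{closed} rectangle $D=[\al_\1,\al_\2]\times[\be_\1,\be_\2]$; for the negative one, that $\be_\2<\al_\1$, so that $x-y>0$ on $D$. Both are encodings of Condition~\ref{conic-meets}: passing to the dual conic $\cC^*\subset\Proj(\torh^*)$, the statement that $\cC$ meets the interior of $\Delta^*$ becomes the statement that $\cC^*$ avoids the interior of $\Delta$ in the Segre case, and that $\Delta$ lies in the exterior of $\cC^*$ in the Veronese case---exactly the conditions keeping the conformal factor $(x-y)/q(x,y)$ finite and of constant sign on $D$. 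Making this duality dictionary precise, and checking that the ordering extracted from the facet labelling is consistent with the inequalities required in Proposition~\ref{p:compactification}, is where the effort concentrates; granting it, Proposition~\ref{p:compactification} immediately yields the desired positive (respectively negative) ambitoric compactification, which by construction has rational Delzant polytope $\Delta$ and induced conic $\cC$.
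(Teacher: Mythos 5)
Your proposal follows the paper's proof essentially verbatim: identify $\torh$ with $S^2W^*$ via a Veronese factorization in the positive case, or—using precisely the hypothesis $[\iota(1)]\notin\cC$ to get an indecomposable element—with $q^\perp\subset W^*\otimes W^*$ in the negative case, read off the cyclically ordered endpoints $\be_\1<\be_\2\leq\al_\1<\al_\2$ and the signed scales from the affine normals, and feed this data into Proposition~\ref{p:compactification}, with Condition~\ref{conic-meets} supplying the compatibility/positivity of the factorization exactly as in the paper's discussion preceding that condition. The only blemish is your treatment of \eqref{rational}: no rescaling is needed (nor permitted, since $\Lab$ is prescribed)—the normals of the given rational Delzant polytope lie in a lattice by definition, so their $\Z$-span is automatically a rank-two lattice.
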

\begin{proof} For $\cC$ nonsingular, Condition~\ref{conic-meets} implies there
is a Veronese factorization $\fs$ compatible with $(\Delta,\Lab)$. We identify
$\torh$ with $S^2W^*$ using $\fs$, and fix an area form $\kappa$ on $W$, hence
an isomorphism $\gavec\mapsto \gavec^\flat\in \gavec^0$ from $W$ to $W^*$. Up
to an overall sign, the affine normals therefore have the form
$L_{\be,\1}=-{\bevec_\1}^\flat\otimes{\bevec_\1}^\flat$,
$L_{\al,\1}={\alvec_\1}^\flat\otimes{\alvec_\1}^\flat$,
$L_{\be,\2}={\bevec_\2}^\flat\otimes{\bevec_\2}^\flat$ and
$L_{\al,\2}={\alvec_\2}^\flat\otimes{\alvec_\2}^\flat$; it is straightforward
to check that $[\bevec_\1]$, $[\bevec_\2]$, $[\alvec_\1]$, $[\alvec_\2]$ are
in cyclic order on $\Proj(W)$ and hence choose an affine chart in which they
are represented by $\be_\1<\be_\2\leq\al_\1<\al_\2$.
Proposition~\ref{p:compactification} now implies that there is a positive
ambitoric compactification with rational Delzant polytope isomorphic to
$\Delta$ and induced conic $\cC$.

For negative compactifications, we identify $\cC\sub\Proj(\torh)$ with the
conic of decomposables in $\psi^0\sub W_1^*\otimes W_2^*$, where $\psi\in
W_1\otimes W_2$ is decomposable if $\cC$ is singular, and indecomposable
otherwise. (By Appendix~\ref{s:fs4}, the factorization is Veronese if $\cC$ is
nonsingular and Segre otherwise; in the singular case, $\cC$ must be a
line-pair, since the projective normals are not collinear.)  Now if
$\iota(1)\in \torh$ is not on $\cC$, its image in $W_1^*\otimes W_2^*$ is not
decomposable, and may be used to identify $W_1$ and ${W_2}^*$; we may identify
$W_1$ with $W_2$ and drop subscripts by fixing also an area form
$\kappa$. Then $\psi$ is dual to a quadratic form $q\in S^2W^*$, i.e.,
$\torh=q^\perp$. We conclude, similarly to the positive case, that
Condition~\ref{conic-meets} implies that the normals have the form
$L_{\be,\1}=q({\bevec_\1},\cdot)\otimes {\bevec_\1}^\flat$,
$L_{\al,\1}={\alvec_\1}^\flat\otimes q({\alvec_\1},\cdot)$,
$L_{\be,\2}=-q(\bevec_\2,\cdot)\otimes {\bevec_\2}^\flat$ and
$L_{\al,\2}=-{\alvec_\2}^\flat\otimes q({\alvec_\2},\cdot)$; the rest of the
construction, using Proposition~\ref{p:compactification}, follows the positive
case.
\end{proof}
\begin{rem} Both types of compactification exist if $\cC$ is nonsingular and
does not pass through $[\iota(1)]$. They are then related by interchanging the
roles of $q$ (nonsingular) and $\kappa$. If neither exists, $\cC$ is the
line-pair joining opposite normals, and $[\iota(1)]$ lies on one of its
lines. In particular, $\Delta$ is a trapezium or parallelogram.
\end{rem}

Condition~\ref{conic-meets} means that the conic $\cC^*$ tangent to the four
lines of $\Delta$ (the dual conic if $\cC$ is nonsingular, or the double line
dual to the vertex of $\cC$ if it is a line-pair) does not meet $\Delta$,
i.e., $\cC^*$ is not an inscribed ellipse, or a degeneration of such an
ellipse to a double line through opposite points of $\Delta$. We conclude this
section by using the affine structure $\iota(1)\in\Delta^*$ to provide a
sufficient criterion for Condition~\ref{conic-meets}.

\begin{prop}\label{fact-condition} Let $(\Delta,\Lab)$ be a rational Delzant
quadrilateral with affine structure $\iota(1)\in \torh$ and let $\cC$ be a
conic in the pencil through the four projective normals such that $[\iota(1)]$
is not a singular point on $\cC$. Suppose there is an affine function
orthogonal to $\iota(1)$ with no zero on the segment of the Newton line
between the midpoints of the diagonals of $\Delta$.  Then $\cC$ satisfies
Condition~\textup{\ref{conic-meets}}.
\end{prop}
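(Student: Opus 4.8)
The plan is to pass to the dual picture. By the remark following Proposition~\ref{p:existence}, Condition~\ref{conic-meets} is equivalent to the statement that the conic $\cC^*\subset\Proj(\torh^*)$ tangent to the four side-lines of $\Delta$ does not meet the interior of $\Delta$. Let $Q\in S^2\torh^*$ be a quadratic form cutting out $\cC$, regarded as a symmetric map $Q\colon\torh\to\torh^*$. The assumption that $[\iota(1)]$ is not a singular point of $\cC$ means precisely that $Q\iota(1)\neq0$, so that $P:=[Q\iota(1)]\in\Proj(\torh^*)$ is well defined. A one-line pole--polar computation (using that $\cC^*$ is cut out by $Q^{-1}$ and $Q^{-1}(Q\iota(1))=\iota(1)$) identifies $P$ as the pole, with respect to $\cC^*$, of the line at infinity of $\Xi$ (the line dual to $[\iota(1)]$); equivalently, $P$ is the centre of $\cC^*$ when $\cC^*$ is a genuine conic.

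The crux is then two assertions about the position of $P$. First, $P$ lies on the Newton line $v_\dd v_\ad$. I would prove this by observing that $Q\mapsto[Q\iota(1)]$ is projective--linear on the pencil of conics through the four projective normals, so its image is a line (the quadrilateral not being a parallelogram); evaluating on the two reducible members whose dual is a diagonal of $\Delta$ pins this line down. For such a member $Q=A_1A_2$ with $A_1,A_2\in\torh^*$ representing the two endpoints of that diagonal normalised on $\Xi$, one has $Q\iota(1)=A_1(\iota(1))A_2+A_2(\iota(1))A_1=A_1+A_2$, which represents the midpoint $v_\dd$ (resp.\ $v_\ad$); hence the image is exactly $v_\dd v_\ad$. (Alternatively this is the classical Newton--Gauss theorem that the centres of the conics inscribed in a quadrilateral sweep out the Newton line.) Second, $\cC^*$ can meet the interior of $\Delta$ only when $P$ lies on the closed segment $[v_\dd,v_\ad]$: among the conics tangent to the four side-lines, exactly those with centre on the open segment $(v_\dd,v_\ad)$ are inscribed ellipses, the two endpoints being the degenerate diagonal line-pairs, while a member with centre outside $[v_\dd,v_\ad]$ is a hyperbola or parabola tangent to the four side-lines only at points exterior to $\Delta$, hence disjoint from its interior.

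Granting these, the conclusion is immediate. An affine function $f\in\torh$ is orthogonal to $\iota(1)$ exactly when $Q(\iota(1),f)=0$, i.e.\ when the zero line $\{f=0\}\subset\Proj(\torh^*)$ passes through $P=[Q\iota(1)]$. The hypothesis therefore supplies a line through $P$ disjoint from the segment $[v_\dd,v_\ad]$; as every line through a point of $[v_\dd,v_\ad]$ meets that segment, this forces $P\notin[v_\dd,v_\ad]$. By the second assertion $\cC^*$ does not meet the interior of $\Delta$, which is Condition~\ref{conic-meets}.

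I expect the main obstacle to be the sharp form of the second assertion. It is not enough that the centre of $\cC^*$ be interior to $\Delta$, because the Newton line generally meets $\Delta$ in a chord strictly larger than $[v_\dd,v_\ad]$; one must use the precise fact that an inscribed \emph{ellipse} has its centre on the open segment between the diagonal midpoints, and check that the two reducible members with interior dual conic are exactly the two diagonals (centres $v_\dd,v_\ad$). The degenerate case in which the given $\cC$ is itself a line-pair is subsumed in the same argument: there $\cC^*$ is one of the three double lines dual to the vertices of the three reducible members, namely the two diagonals of $\Delta$ (with $P=v_\dd,v_\ad$, meeting the interior) and the line through the two points where opposite sides of $\Delta$ meet (with $P$ outside $[v_\dd,v_\ad]$, disjoint from $\Delta$), so the criterion $P\in[v_\dd,v_\ad]$ continues to decide Condition~\ref{conic-meets}.
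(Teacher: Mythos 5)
Your proposal is correct and follows essentially the same route as the paper's proof: you identify the centre of $\cC^*$ as the point $[Q\iota(1)]$ at which every affine function orthogonal to $\iota(1)$ (with respect to $\cC$) vanishes, and then invoke Newton's theorem that inscribed ellipses, and their degenerations to double lines, have centres (or midpoints) on the segment of the Newton line between the diagonal midpoints --- which is precisely the paper's two-step argument, including its parenthetical treatment of the case where $\cC$ is a line-pair. The only difference is presentational: you sketch a proof of the Newton--Gauss ingredient via linearity of $Q\mapsto[Q\iota(1)]$ on the pencil and evaluation at the two diagonal line-pairs, where the paper simply cites the classical theorem.
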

\begin{proof} The centre of $\cC^*$ is the point dual to the line orthogonal
to $[\iota(1)]$ with respect to $\cC$ (or the midpoint of the vertices of
$\Delta$ on $\cC^*$ if $\cC$ is singular). Thus all affine functions
orthogonal to $\iota(1)$ with respect to $\cC$ vanish there. Newton's theorem
for convex quadrilaterals implies that inscribed ellipses (or their
degenerations) have centres (or midpoints) on the segment of the Newton line
between the midpoints of the diagonals of $\Delta$. Hence $\cC^*$ cannot be
among these, so Condition~\ref{conic-meets} holds.
\end{proof}

\section{Extremal ambitoric $4$-orbifolds and convex quadrilaterals}
\label{s:ext-orbi-amb}

\subsection{Extremal ambitoric metrics and adapted factorizations}
\label{s:extremal}

A toric K\"ahler metric is extremal if and only if its scalar curvature is
equal to the extremal affine function. For regular ambitoric structures, a
straightforward computation of the scalar curvatures of the two K\"ahler
metrics yields the following result~\cite{ACG1}.

\begin{result} Let $(J_+,J_-, g_+,g_-,\tor)$ be a regular ambitoric structure
given by a quadratic $q$ and functions of one variable $A,B$. Then $(g_+,J_+)$
is an extremal K\"ahler metric if and only if $(g_-,J_-)$ is an extremal
K\"ahler metric if and only if
\begin{equation}\begin{split}
A(z)&=q(z)\pi(z)+P(z),\\
B(z)&=q(z)\pi(z)-P(z),\\
\end{split}\end{equation}
where $\pi$ is a polynomial of degree at most two orthogonal to $q$ and $P$ is
polynomial of degree at most four. In this case
\begin{align}
s_- &= -\frac{24 \pi(x,y)}{x-y},\\
s_+ &=-\frac{w(x,y)}{q(x,y)},
\end{align}
where the quadratic $w$ \textup(defining $w(x,y)=w_0xy+w_1(x+y)+w_2$\textup)
is equal to $C_q(P)$, where $C_q$ is a surjective linear map from quartics to
quadratics orthogonal to $q$.
\end{result}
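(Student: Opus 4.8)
The plan is to reduce extremality of $g_\pm$ to the affineness of its scalar curvature $s_\pm$ on $\Delta_\pm$, and then to compute $s_\pm$ explicitly from the torus metrics $\mathbf H^\pm$ via the Abreu formula~\eqref{abreu}. As recalled at the start of \S\ref{s:extremal}, a compatible toric K\"ahler metric is extremal exactly when $s_\pm$ equals the extremal affine function; but by~\S\ref{s:eaf-stab} the extremal affine function is the $L^2$-projection of $s_\pm$ onto affine functions (a quantity determined by the polytope alone, via the boundary moment identity), so if $s_\pm$ is itself affine it must coincide with it. Hence $g_\pm$ is extremal if and only if $s_\pm=-\mathrm{div}\,\delta\mathbf H^\pm$ is affine on $\Delta_\pm$, and the whole statement reduces to computing $s_\pm$ and characterising when it is affine.

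For the computation I would fix a basis of $\tor=S^2_{0,q}W^*$ with associated linear momentum coordinates $\xi=\mu^\pm(\bx,\by)$, so that the double divergence in~\eqref{abreu} is a coordinate-independent second contraction of $\mathbf H^\pm$. Two structural features make this tractable. First, by~\eqref{mu-minus}--\eqref{mu-plus} the momentum maps are explicit, $\langle\mu^-,p\rangle=-p(x,y)/(x-y)$ and $\langle\mu^+,p\rangle=-p(x,y)/q(x,y)$ for $p$ a quadratic (where $p(x,y)$ is its polarisation); using $\partial_x p(x,y)=\tfrac12 p'(y)$ and $\partial_y p(x,y)=\tfrac12 p'(x)$ the Jacobian $\partial\xi/\partial(x,y)$, and hence the passage from $\partial_{\xi}$ to $\partial_x,\partial_y$, is rational with denominators a power of $x-y$ (respectively of $q(x,y)$). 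Second, the numerators of $\mathbf H^\pm$ are separated, of the shape $A(x)\,p(y)\tilde p(y)+B(y)\,p(x)\tilde p(x)$. Feeding the change of variables into the contraction and using separation, the second-order operator decouples into one ordinary differential expression in $A$ evaluated at $x$ and the same expression in $B$ evaluated at $y$; organising the denominators with the help of~\eqref{detH}, I expect $s_-$ to appear as $N_-(x,y)/(x-y)$ and $s_+$ as $N_+(x,y)/q(x,y)$, with $N_\pm$ symmetric and built rationally from $A,A',A'',B,B',B''$ and $q$.

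It then remains to impose affineness. By~\eqref{mu-minus} an affine function on $\Delta_-$ is $-\pi(x,y)/(x-y)$ with $\pi\in\tor=S^2_{0,q}W^*$ a quadratic orthogonal to $q$; since $s_-=N_-(x,y)/(x-y)$ with $N_-$ symmetric, and no nonzero constant is a symmetric polarisation divided by the antisymmetric $x-y$, affineness forces $N_-$ to be (minus) such a $\pi$ with no additive constant. Writing $N_-=-\pi(x,y)$ as an identity in the independent variables $x,y$ and separating variables turns it into ordinary differential equations that force $A$ and $B$ to be polynomials of degree at most four with $A+B$ divisible by $q$ and quotient orthogonal to $q$; setting $\pi:=(A+B)/(2q)\in\tor$ and $P:=(A-B)/2$ then gives exactly $A=q\pi+P$, $B=q\pi-P$, and substituting back yields $s_-=-24\pi(x,y)/(x-y)$. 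The analysis on $\Delta_+$ is parallel, with affine functions $-w(x,y)/q(x,y)$, and produces the identical family $A=q\pi+P$, $B=q\pi-P$; this establishes the two equivalences at once. On this family, substituting back gives $s_+=-w(x,y)/q(x,y)$ with $w$ a quadratic depending linearly on $P$ alone; its component orthogonal to $q$ is $C_q(P)$ (the $q$-component only shifting $s_+$ by a constant, fixed by the affine-structure normalisation), and surjectivity of $C_q$ onto the two-dimensional space of quadratics orthogonal to $q$ is checked by computing its matrix on the five-dimensional space of quartics. A shortcut for the positive computation is the duality $x-y=\kappa(\bx,\by)\leftrightarrow q(x,y)$ interchanging the two affine structures (see the remark following Proposition~\ref{p:existence}), which converts the $\mathbf H^-$ computation into the $\mathbf H^+$ one.

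The main obstacle I anticipate is the change of variables in the Abreu formula: because $\mu^\pm$ is genuinely nonlinear in $(x,y)$, transporting $\partial^2/\partial\xi_r\partial\xi_s$ through it produces, besides the transformed second derivatives, first-derivative terms coming from the $x,y$-dependence of the Jacobian, and one must check that these combine to leave the clean separated expression for $N_\pm$. The separation of variables in the numerators is precisely what controls this, and keeping the denominators in the invariant forms $x-y$ and $q(x,y)$ (as in~\eqref{detH}) is what makes the final affineness test—and the emergence of the degree and orthogonality constraints on $\pi$ and $P$, together with the definition of $C_q$—transparent; but the bookkeeping in this cancellation is the delicate heart of the argument.
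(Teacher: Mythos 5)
The first thing to say is that the paper you are reading does not prove this statement at all: it is imported from Part I of the series \cite{ACG1}, the present paper's entire justification being the sentence that ``a straightforward computation of the scalar curvatures of the two K\"ahler metrics yields the following result''. In Part I the scalar curvatures $s_\pm$ are computed from the explicit expressions \eqref{g-pm}--\eqref{J-pm} in the $(x,y,\taumap)$-coordinates, and extremality is then imposed as the condition that $s_\pm$ be a Killing potential. Your route is genuinely different in flavour: you stay inside the symplectic framework of section~\ref{s:toric-orb}, reduce extremality to affineness of $s_\pm$ in the respective momenta, and propose to compute $s_\pm$ from the torus metrics $\mathbf H^\pm$ by Abreu's formula \eqref{abreu}, transported through the momentum maps \eqref{mu-minus}--\eqref{mu-plus}. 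This is exactly the technique Legendre \cite{Eveline} uses for the orthotoric and Calabi-type cases, and it does extend to general $q$. Your structural observations are correct and form the right skeleton: affine functions of $\mu^-$ are exactly $-p(x,y)/(x-y)+c$ with $p$ orthogonal to $q$; symmetry of the numerator of $s_-$ against antisymmetry of $x-y$ kills the constant $c$; separation of variables converts affineness into ODEs whose solutions are quartics with $A+B=2q\pi$, $\pi\perp q$; and both signs lead to the same family, which is what yields the double equivalence. (One small repair: locally there is no extremal affine function or $\mathrm{L}_2$-projection --- those are compact-orbifold notions --- so justify ``extremal iff $s_\pm$ affine in the momenta'' directly, e.g.\ by noting that $\mathrm{grad}_\omega s=\sum_j(\partial s/\partial\mu_j)K_j$ is Killing iff the coefficients are constant.)

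The genuine shortfall is that the computation which constitutes the entire content of the theorem is left as an expectation. You say you ``expect'' $s_-$ to take the form $N_-(x,y)/(x-y)$ with $N_-$ symmetric and separated, and you explicitly defer the cancellation of the first-derivative terms produced by the nonlinear change of variables $\xi=\mu^\pm(x,y)$; likewise the identification of the quadratic $w$ with $C_q(P)$ --- including the vanishing of its $q$-component, i.e.\ of the additive constant in $s_+$ --- can only come out of that same computation, and the same goes for the surjectivity of $C_q$ and the precise factor $24$ in $s_-$. For this statement there is nothing else to prove, so as written your text is a correct and workable plan rather than a proof. To finish it you must either push the chain rule through \eqref{abreu} (Legendre's method, messier here because $\mu^\pm$ is quadratic-rational in $(x,y)$, so the Jacobian terms are exactly where the work lies), or bypass the momenta entirely and compute the curvature of \eqref{g-pm} directly in the $(x,y,\taumap)$-coordinates, which is what Part I does.
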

In~\cite{ACG1}, we also proved that the ambitoric structure is locally
conformally Einstein if in addition the quadratics $\pi$ and $w$ in this
theorem are linearly dependent. We shall use this to construct examples later.
We shall also need the explicit formula for $C_q(P)(z)$, which is the Poisson
bracket of $q(z)$ with $q(z)P''(z)-3q'(z)P(z)+6q''(z)P(z)$.

For ambitoric compactifications, we deduce the following from the above
theorem.

\begin{cor} For an extremal regular ambitoric compactification with induced
conic $\cC$, the extremal affine function $\eaf_\pm\in\torh_\pm$ of
$\omega_\pm$ is orthogonal to the affine structure $\iota(1)\in\torh_\pm$
with respect to $\cC$.
\end{cor}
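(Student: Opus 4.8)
The plan is to invoke the preceding theorem, which for an extremal metric identifies the extremal affine function $\eaf_\pm$ with the scalar curvature $s_\pm$, and then to read off $\eaf_\pm$ as an explicit element of $\torh_\pm$ directly from the momentum-coordinate formulae of section~\ref{s:momenta}. The two cases are then almost formal, since in each the relevant conic $\cC$ is cut out by a quadratic form whose polarization we can compute by hand.

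First I would treat the positive case. Here $\torh_+\cong S^2W^*$ with affine structure $\iota_+(1)=q$, and since $\ip{\mu^+(x,y),w}=-w(x,y)/q(x,y)=s_+$, the scalar curvature, viewed as an affine function, is exactly the quadratic $w=C_q(P)\in S^2W^*$; that is, $\eaf_+=w$. In the Veronese case the induced conic $\cC\sub\Proj(S^2W^*)$ is the conic of perfect-square quadratics, cut out by the discriminant, whose polarization is the restriction of $\kappa\otimes\kappa$ to $S^2W^*$. Conjugacy with respect to $\cC$ is therefore orthogonality for this inner product, so the required relation $\eaf_+\perp_\cC\iota_+(1)$ is precisely $w\perp q$, which holds by construction since $C_q$ takes values in the quadratics orthogonal to $q$.

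For the negative case I would proceed in parallel. Now $\torh_-\cong S^2_{0,q}W^*\oplus\Wedge^2W^*=q^\perp\sub W^*\otimes W^*$ with affine structure $\iota_-(1)=\kappa$, and the computation $\ip{\mu^-(x,y),\pi}=-\pi(x,y)/(x-y)$ (using $\kappa(\bx,\by)=x-y$ in the affine chart and the symmetry of $\pi$, which also makes the pairing well defined modulo $q^\sharp$ since $\pi\perp q$) gives $s_-=\ip{\mu^-(x,y),24\pi}$, so that $\eaf_-=24\pi$ lies in the summand $S^2_{0,q}W^*$, with vanishing $\Wedge^2W^*$-component. The induced conic $\cC\sub\Proj(\torh_-)$ is the locus of decomposables in $\torh_-$, i.e. the zero set of $\det$ restricted to $q^\perp$. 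The key elementary identity is $\det(s+t\kappa)=\det s+t^2\det\kappa$ for $s\in S^2W^*$ and $t\in\R$, which exhibits the quadratic form cutting out $\cC$ as block diagonal with respect to the splitting $S^2_{0,q}W^*\oplus\Wedge^2W^*$; hence these two summands are $\cC$-conjugate (this holds uniformly whether $\cC$ is nonsingular or a line-pair). Since $\eaf_-$ lies in the first summand and $\iota_-(1)=\kappa$ spans the second, the orthogonality $\eaf_-\perp_\cC\iota_-(1)$ follows at once.

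The argument is thus bookkeeping rather than a genuine obstacle. The step demanding the most care is the correct reading of $\eaf_\pm$ as an element of $\torh_\pm$: one must check that the scalar curvature, naturally a function of $(x,y)$, coincides with the affine function determined by $w$ (respectively $24\pi$) through the pairings $\ip{\mu^\pm,\cdot}$, and, in the negative case, that the defining form of $\cC$ carries no cross term between the symmetric and antisymmetric parts. Both are immediate consequences of the formulae already established, and once verified the corollary drops out with no further input.
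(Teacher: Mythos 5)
Your proposal is correct and is exactly the deduction the paper intends (the paper states the corollary with no written proof, merely as a consequence of the preceding theorem): identify $\eaf_\pm$ with the scalar curvature via the pairings $\ip{\mu^+,w}=-w(x,y)/q(x,y)$ and $\ip{\mu^-,24\pi}=-24\pi(x,y)/(x-y)$, then observe that $w=C_q(P)\perp q$ with $\cC$-conjugacy in the Veronese case being the $\kappa\otimes\kappa$ inner product (the polarized discriminant), and that in the negative case $\det$ is block diagonal on $S^2_{0,q}W^*\oplus\Wedge^2W^*$ so that $\pi$ and $\kappa$ are conjugate. Your identity $\det(s+t\kappa)=\det s+t^2\det\kappa$ is a nice way to make the second point explicit, but the overall route is the same as the paper's.
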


Given a rational Delzant quadrilateral $(\Delta,\Lab)$ in $\Xi\subset
\Proj(\torh^*)$, there is a unique conic $\cC(\Delta,\Lab)\subset
\Proj(\torh)$ in the pencil through the normals such that $[\iota(1)]$ is
orthogonal to $[\eaf]$. Now $\cC(\Delta,\Lab)$ corresponds to a conic in
$\Proj(\torh^*)$ such that $\eaf$ vanishes at its centre (if nonsingular) or
midpoint (if a double line).

\begin{lemma}\label{l:cond1} A rational Delzant quadrilateral $(\Delta,\Lab)$
is equipoised iff $[\iota(1)]$ lies on the conic $\cC(\Delta,\Lab)$ and
temperate iff the conic $\cC(\Delta,\Lab)$ satisfies
Condition~\textup{\ref{conic-meets}}.
\end{lemma}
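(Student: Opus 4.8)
The plan is to reduce both equivalences to the behaviour of the extremal affine function $\eaf$ along the Newton line $v_\dd v_\ad$. First I record the elementary affine identities $2\eaf(v_\dd)=\eaf(v_{\1\1})+\eaf(v_{\2\2})$ and $2\eaf(v_\ad)=\eaf(v_{\1\2})+\eaf(v_{\2\1})$, which hold because $\eaf$ is affine and $v_\dd,v_\ad$ are the diagonal midpoints. By Definition~\ref{nice-quads}, $(\Delta,\Lab)$ is then equipoised iff $\eaf(v_\dd)=\eaf(v_\ad)$, i.e.\ iff $\eaf$ is constant along the Newton line, and temperate iff $\eaf(v_\dd)>0$ and $\eaf(v_\ad)>0$, i.e.\ (as $\eaf$ is affine) iff $\eaf>0$ on the closed segment $[v_\dd,v_\ad]$.

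Next I translate the conic $\cC(\Delta,\Lab)$ into the same terms. By the description preceding the statement, $\cC(\Delta,\Lab)$ is the member of the pencil through the projective normals whose dual conic $\cC^*$ has its centre (or, if $\cC$ is a double line, its midpoint) at the point where $\eaf$ vanishes. Since the four facets of $\Delta$ are tangent to $\cC^*$, Newton's theorem---exactly as used in the proof of Proposition~\ref{fact-condition}---forces this centre onto the Newton line. Hence the centre $P_0$ of $\cC^*(\Delta,\Lab)$ is precisely the intersection of the line $\{\eaf=0\}$ with the Newton line. The same theorem identifies the inscribed ellipses tangent to the four sides with those conics whose centre lies on the open segment $(v_\dd,v_\ad)$, and the degenerate double lines through a pair of opposite vertices with those whose midpoint is an endpoint $v_\dd$ or $v_\ad$; thus $\cC^*(\Delta,\Lab)$ is one of these excluded conics exactly when $P_0$ lies in the closed segment $[v_\dd,v_\ad]$.

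The two equivalences then follow. For the first, $\eaf$ is constant along the Newton line iff $\{\eaf=0\}$ is parallel to it, iff $P_0$ lies at infinity; and the centre of $\cC^*$ is at infinity iff $\cC^*$ is tangent to the line at infinity, which by the duality between $\cC$ and $\cC^*$ holds iff the point $[\iota(1)]$ dual to the line at infinity lies on $\cC$. This gives equipoised $\iff[\iota(1)]\in\cC(\Delta,\Lab)$. For the second, Condition~\ref{conic-meets} holds iff $\cC^*(\Delta,\Lab)$ is neither an inscribed ellipse nor such a degeneration, iff $P_0\notin[v_\dd,v_\ad]$, iff $\eaf$ has no zero on $[v_\dd,v_\ad]$. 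By Lemma~\ref{l:unstable}, $\eaf$ is positive at the centroid $v_0=\frac12(v_\dd+v_\ad)$; so if $\eaf$ has no zero on $[v_\dd,v_\ad]$ it cannot change sign there and must be everywhere positive, which is temperateness, and conversely temperateness gives $\eaf>0$, hence no zero.

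I expect the main obstacle to be the careful bookkeeping of the boundary cases rather than any hard estimate: one must match the centre at infinity with equipoisedness, and the centre at an endpoint $v_\dd$ or $v_\ad$ with the degenerate double line through opposite vertices, so that Condition~\ref{conic-meets} corresponds to the \emph{closed} Newton segment and not the open one. The positivity $\eaf(v_0)>0$ from Lemma~\ref{l:unstable} does the essential work of upgrading ``no zero on $[v_\dd,v_\ad]$'' to ``positive on $[v_\dd,v_\ad]$'', excluding the spurious sign-reversed possibility. Finally, the parallelogram case $v_\dd=v_\ad$, where the Newton line degenerates to a point and both conditions hold automatically, should be verified directly or recovered by a continuity argument from the generic case.
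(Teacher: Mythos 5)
Your proof is correct and follows essentially the route the paper intends: Lemma~\ref{l:cond1} appears there with no explicit proof, being presented as an immediate consequence of the surrounding discussion, and you assemble precisely those ingredients --- the vanishing of $\eaf$ at the centre of $\cC^*(\Delta,\Lab)$, Newton's theorem exactly as invoked in the proof of Proposition~\ref{fact-condition}, the paper's reformulation of Condition~\ref{conic-meets} in terms of inscribed ellipses and their diagonal degenerations, and the positivity $\eaf(v_0)>0$ from Lemma~\ref{l:unstable}. The two delicate points you flag are genuine but harmless: the parallelogram case is settled by direct inspection (there $\cC(\Delta,\Lab)$ is the line-pair of diagonals of $\Delta^*$, whose vertex is $[\iota(1)]$, so both sides of both equivalences hold), and the converse half of Newton's theorem that you invoke (pencil members whose centre lies in the closed Newton segment are exactly the inscribed ellipses and the two diagonal double lines) is classical and is equally implicit in the paper's own treatment.
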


Consequently, for temperate rational Delzant quadrilaterals, there is an
ambitoric compactification unless $\cC(\Delta,\Lab)$ is the diagonal line
pair, and the affine structure $[\iota(1)]$ lies on one of the diagonals,
i.e., $\Delta$ is an equipoised trapezium.

\subsection{Extremal ambitoric orbifolds and K-polystability}
\label{s:stability}

In order to construct extremal ambitoric orbifolds, we specialize the
discussion of section~\ref{s:ambi-orbifolds} to the case that $A(z)$ and
$B(z)$ in Proposition~\ref{p:compactification} are polynomials of degree $\le
4$. Our approach follows~\cite{ACGT,ACGT2,Eveline,Lejmi} to which we
refer the reader for further details.

The boundary conditions~\eqref{AB-boundary} have the general solution
\begin{align*}
A(z)&= (z-\al_\1)(z-\al_\2)\bigl( (c + d)(z-\al_\1)(z-\al_\2)
+ N_{\al,\1}(z-\al_\2) + N_{\al,\2}(z-\al_\1) \bigr)\\
B(z)&= (z-\be_\1)(z-\be_\2)\bigl( (c - d) (z-\be_\1)(z-\be_\2)
+ N_{\be,\1} (z-\be_\2) + N_{\be,\2} (z-\be_\1) \bigr)
\end{align*}
for $A(z)$ and $B(z)$ in terms of $(\al_k,r_{\al,k})$ and $(\be_k,r_{\be,k})$
(for $k=\1,\2$), where $N_{\al,k}=2r_{\al,k}/(\al_\2-\al_\1)^2$ and
$N_{\be,k}=2r_{\be,k}/(\be_\2-\be_\1)^2$ (for $k=\1,\2$), and $c,d$ are two
free parameters. For fixed $q(z)$, the extremality conditions of
section~\ref{s:extremal} state that $A(z)+B(z)=q(z)\pi(z)$ with $\pi$
orthogonal to $q$. These impose three further linear conditions on $A$ and
$B$, which we may solve for $c$ and $d$, leaving one linear condition on
$(r_{\al,k},r_{\be,k})$ whose coefficients depend rationally on $\al_k$ and
$\be_k$ ($k=\1,\2$).

\begin{ex}\label{orthotoric} When $q(z)=1$ (the orthotoric case), we have
$c=0$ and two formulae for $d$ whose equality yields the equation
\begin{equation*}
(N_{\al,\1}\ N_{\al,\2}\ N_{\be,\1}\ N_{\be,\2})
\begin{pmatrix}
(\al_\1+\al_\2-\be_\1-\be_\2)^2 + 2(\al_\2-\be_\1)(\al_\2-\be_\2)\\
(\al_\1+\al_\2-\be_\1-\be_\2)^2 + 2(\al_\1-\be_\1)(\al_\1-\be_\2)\\
(\al_\1+\al_\2-\be_\1-\be_\2)^2 + 2(\be_\2-\al_\1)(\be_\2-\al_\2)\\
(\al_\1+\al_\2-\be_\1-\be_\2)^2 + 2(\be_\1-\al_\1)(\be_\1-\al_\2)
\end{pmatrix}=0
\end{equation*}
found by E. Legendre~\cite{Eveline}.\footnote{Hence the linear system always
  determines $c$ and $d$, since this condition is open and natural in $q$.}
She proved that this condition on $(r_{\al,k},r_{\be,k})$ is equivalent to
$\Delta_+$ being equipoised (relative to the corresponding normals) and thus
showed that the existence of extremal K\"ahler metrics is equivalent to
(toric) K-polystability in this case. However, it turns out that when
$\Delta_+$ is equipoised, it is automatically K-polystable: for $q(z)=1$,
$\deg(A+B)\leq 1$, and so between any maximum of $A$ on $(\al_\1,\al_\2)$ and
$B$ on $(\be_\1,\be_\2)$, the quadratic $A''=-B''$ has a unique root; the
boundary conditions thus force $A$ and $B$ to be positive on $(\al_\1,\al_\2)$
and $(\be_\1,\be_\2)$ respectively.

For equipoised trapezia (which cannot be orthotoric), Legendre~\cite{Eveline}
established similar existence and K-polystability results using ambitoric
metrics of Calabi type.
\end{ex}

We now generalize these results to arbitrary quadrilaterals, on which we
relate the existence of ambitoric extremal K\"ahler metrics to the toric
K-polystability criteria.

\begin{thm} \label{thm:stability} Let $(M,\omega,\T)$ be a toric symplectic
orbifold whose rational Delzant polytope $\Delta$ is a quadrilateral
\textup(i.e., $b_2(M)=2$\textup).  Then the following are equivalent\textup:
\begin{numlist} 
\item $(M,\omega)$ admits a $\T$-invariant extremal K\"ahler metric\textup;
\item $(M,\omega,\T)$ is analytically relatively K-polystable wrt.~toric
degenerations\textup;
\item $(M,\omega)$ admits a $\T$-invariant ambitoric extremal K\"ahler metric
$g$ which is regular on generic orbits, unless $\Delta$ is an equipoised
trapezium, in which case, $g$ has Calabi type or is a K\"ahler product.
\end{numlist}
In particular, if $(M,\omega,\T)$ admits an extremal K\"ahler metric, it must
be ambitoric.
\end{thm}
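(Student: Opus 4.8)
The plan is to establish the cycle of implications $(iii)\Rightarrow(i)\Rightarrow(ii)\Rightarrow(iii)$, reserving the bulk of the work for the last arrow, and then to deduce the concluding assertion from the equivalence together with uniqueness of toric extremal metrics. The implication $(iii)\Rightarrow(i)$ is immediate, since an ambitoric extremal K\"ahler metric (regular, of Calabi type, or a product) is in particular a $\T$-invariant extremal K\"ahler metric. The implication $(i)\Rightarrow(ii)$ is the necessity of relative K-polystability, established in the toric setting by Zhou and Zhu~\cite{ZZ2}. Everything therefore reduces to $(ii)\Rightarrow(iii)$: from K-polystability of $(\Delta,\Lab)$ I must produce an explicit extremal ambitoric metric.

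First I would use the combinatorics of Section~\ref{s:quad} to locate the correct factorization. By Lemma~\ref{l:unstable}, K-polystability forces $\eaf$ to be positive at the midpoints of the diagonals, so $\Delta$ is temperate in the sense of Definition~\ref{nice-quads}. By Lemma~\ref{l:cond1}, this is precisely the statement that the distinguished conic $\cC(\Delta,\Lab)$ --- the unique conic in the pencil through the four projective normals for which $[\iota(1)]$ is orthogonal to $[\eaf]$ --- satisfies Condition~\ref{conic-meets}. Provided $\Delta$ is not an equipoised trapezium (so that $\cC(\Delta,\Lab)$ is not the diagonal line-pair carrying $[\iota(1)]$ on one of its lines), Proposition~\ref{p:existence} yields a regular ambitoric compactification of $(\Delta,\Lab)$ whose induced conic is $\cC(\Delta,\Lab)$; by the corollary of Section~\ref{s:extremal}, this is the only choice of conic compatible with extremality. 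With the associated quadratic $q$ fixed, I would solve the extremality equations of Section~\ref{s:extremal} together with the boundary conditions~\eqref{AB-boundary}: as explained there, these determine polynomials $A,B$ of degree at most four with $A+B=q\pi$ and $\pi$ orthogonal to $q$. At this stage $A,B$ need not be positive, but the resulting $\mathbf H^{\pm}$ already satisfies the boundary conditions and has affine $-\mathrm{div}\,\delta\mathbf H^{\pm}=\eaf$, so it is a \emph{formal} extremal solution in the sense of Section~\ref{s:eaf-stab}.

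The crux is to show that K-polystability is equivalent to the positivity of $A$ on $(\al_\1,\al_\2)$ and of $B$ on $(\be_\1,\be_\2)$, which by Proposition~\ref{p:compactification} is exactly what upgrades the formal solution to a genuine extremal ambitoric metric. The key computation is to evaluate the Futaki invariant~\eqref{Sfutaki} on the simple convex PL function $f_\ga$ whose crease is the image of a coordinate line $\{x=\ga\}$, $\ga\in(\al_\1,\al_\2)$; here $u_{f_\ga}$ is proportional to the decomposable normal $\p\ga$. Substituting the formal solution (say $\mathbf H^{+}$ in the positive case; the negative case is identical) and using $\p\ga(\ga)=0$, the integrand collapses on the crease to a strictly positive multiple of $A(\ga)$ (explicitly $A(\ga)(\ga-y)/q(\ga,y)$ for $\mathbf H^{+}$, since $\ga\ge\al_\1>\be_\2\ge y$ and $q>0$ on $D^0$). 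Hence $\cF_{\Delta,\Lab}(f_\ga)$ is a positive multiple of $A(\ga)$, and symmetrically the crease along $\{y=\ga\}$ gives a positive multiple of $B(\ga)$. As each $f_\ga$ is a non-affine PL convex function, K-polystability forces $A>0$ and $B>0$ on the open intervals, and Proposition~\ref{p:compactification} then produces the desired regular ambitoric extremal metric, giving $(iii)$. Using the formal solution here, rather than an a priori metric, is precisely what avoids circularity, since \eqref{Sfutaki} requires only that $\mathbf H$ be a formal extremal solution, not that it be positive definite. The one remaining case, an equipoised trapezium, is handled separately by the Calabi-type and product constructions of Legendre~\cite{Eveline} (cf.~Example~\ref{orthotoric}), which supply an extremal metric of the exceptional form in $(iii)$.

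Finally, for the concluding assertion: if $(M,\omega)$ carries any $\T$-invariant extremal K\"ahler metric then $(i)$ holds, hence $(iii)$ provides an ambitoric extremal metric in the same K\"ahler class. By uniqueness of $\T$-invariant extremal K\"ahler metrics on toric orbifolds~\cite{Guan}, the given metric coincides with this one up to a torus-equivariant automorphism, and is therefore ambitoric. The step I expect to be the main obstacle is the positivity equivalence of the third paragraph: recognizing the coordinate-line creases as exactly the PL functions that detect the signs of $A$ and $B$, and carrying out the Futaki computation on them in the formal (not yet positive) setting, so that relative K-polystability is translated directly into positivity of the quartics.
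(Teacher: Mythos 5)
Your architecture is the same as the paper's: temperateness from Lemma~\ref{l:unstable}, identification of the distinguished conic $\cC(\Delta,\Lab)$ from Lemma~\ref{l:cond1}, compactification via Proposition~\ref{p:existence} (with the equipoised trapezium as the sole exception, handled by Legendre's Calabi-type/product solutions), the evaluation of~\eqref{Sfutaki} on creases lying over coordinate lines $\{x=\ga\}$, $\{y=\ga\}$ to extract the signs of $A$ and $B$ (your formula $A(\ga)(\ga-y)/q(\ga,y)$ is exactly the paper's computation of $\mathbf{H}^+(\p\ga,\p\ga)$ on the crease), and the endgame via~\cite{ZZ2} and the uniqueness theorem of~\cite{Guan}. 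However, there is one genuine gap, and it sits precisely at the clause you passed over: ``these determine polynomials $A,B$ of degree at most four with $A+B=q\pi$.'' That system is \emph{overdetermined}. As section~\ref{s:stability} spells out, the boundary conditions~\eqref{AB-boundary} alone admit a two-parameter family of quartic solutions (the parameters $c,d$), and the extremality requirement $A+B=q\pi$ with $\pi$ orthogonal to $q$ imposes three further linear conditions; eliminating $c$ and $d$ leaves \emph{one residual linear condition on the inverse scales} $(r_{\al,k},r_{\be,k})$. So for the given affine normals $\Lab$ there is no a priori reason that any formal extremal solution $\mathbf{H}^\pm$ exists, and without it the rest of your argument has nothing to feed into~\eqref{Sfutaki}.

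The paper closes this gap with the argument it attributes to Legendre, and this is the real payoff of having chosen $\cC(\Delta,\Lab)$ by the orthogonality of $[\iota(1)]$ and $[\eaf]$. Fix $\cC=\cC(\Delta,\Lab)$ and its factorization structure, and let the normal scales vary: compactifications exist for all positive rescalings of the normals. Solvability of the system for $A,B$ is one linear condition on $(r_{\al,k},r_{\be,k})$. Whenever it holds, the resulting $\mathbf{H}$ is a formal extremal solution even if $A,B$ are not positive, so $-\mathrm{div}\,\delta\mathbf{H}$ \emph{is} the extremal affine function, and by the Corollary of section~\ref{s:extremal} it is orthogonal to $q$. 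But ``$\eaf$ orthogonal to $q$'' is \emph{also} a linear condition on $(r_{\al,k},r_{\be,k})$, since $\eaf$ depends linearly on the inverse scales (section~\ref{s:quad}). The solvability locus is thus contained in the orthogonality locus; as both are cut out by single linear equations, the two conditions agree. Since the actual normals $\Lab$ satisfy $\eaf\perp q$ by the very definition of $\cC(\Delta,\Lab)$, they satisfy the solvability condition, and the quartics $A,B$ — hence your formal solution $\mathbf{H}^\pm$ — exist. You correctly identified $\cC(\Delta,\Lab)$ as the only conic compatible with extremality, but identifying the right conic is necessity, not existence; supplying this linear-conditions-agree argument (or an equivalent substitute) is what your proposal is missing.
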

\begin{proof} We use the notation of sections~\ref{s:toric-symp}
and~\ref{s:quad}, so that $\Delta\subset\Xi\subset {\torh}^*$, where
$\iota\transp(\Xi)=\{1\}$ for an affine structure $\iota\colon\R\to\torh$ on
$\Proj(\torh^*)$ which we identify with $\iota(1)\in\torh$. Since $\Delta$ is
convex, $\iota(1)$ is interior to the strictly convex cone spanned by the
normal rays of $\Delta$; thus $[\iota(1)]$ is interior to the dual polytope
$\Delta^*\subset \Proj(\torh)$ which is the projectivization of this cone.
Let $\eaf\in\torh$ be the extremal affine function and $\cC(\Delta,\Lab)$ the
unique conic in $\Proj(\torh)$ passing through the normals, and such that
$[\iota(1)]$ and $[\eaf]$ are orthogonal.

\smallbreak\noindent{\bf Case 1}. Suppose $\Delta$ is temperate and
$\cC(\Delta,\Lab)$ is nonsingular.  Then by Lemma~\ref{l:cond1},
$\cC(\Delta,\Lab)$ satisfies Condition~\ref{conic-meets} and so
Proposition~\ref{p:existence} implies that there are positive ambitoric
compactifications with rational Delzant polytope $\Delta$ and induced conic
$\cC(\Delta,\Lab)$.

Fixing $\cC=\cC(\Delta,\Lab)$ and the associated factorization structure, such
compactifications also exist for arbitrary positive rational rescalings of the
normals of $\Delta$. Hence we are in a position to apply an argument pioneered
by E.~Legendre~\cite{Eveline} in the parabolic case.  The positive ambitoric
Ansatz, with fixed $\al_k,\be_k$ and $q$ yields a linear condition on the
normal parameters $r_{\al,k},r_{\be,k}$ for the existence of quartics $A,B$
satisfying the boundary conditions~\eqref{AB-boundary} such that
$A(z)+B(z)=q(z)\pi(z)$ with $\pi$ orthogonal to $q$.  If such $A$ and $B$
exist, then even if they do not satisfy the positivity requirement to define
an extremal K\"ahler metric, we can use them to compute that the extremal
affine function is orthogonal to $q$. However, this latter condition is also a
linear condition on the normal parameters $r_{\al,k},r_{\be,k}$. We conclude
that the two linear conditions agree. For the normals $\Lab$ of $\Delta$,
$\eaf$ is orthogonal to $q$; hence there do exist quartics $A, B$ defining a
formal extremal solution $\mathbf H=\mathbf H^+$ on $\Delta=\Delta_+$.

\smallbreak\noindent{\bf Case 2}. Suppose $\Delta$ is temperate, but
$\cC(\Delta,\Lab)$ is singular so that it is the line-pair in the pencil of
conics through the four normals which meets the interior of $\Delta^*$. If
$[\iota(1)]$ is on $\cC(\Delta,\Lab)$, then $\Delta$ is an equipoised
trapezium, hence admits formal extremal solution of Calabi
type~\cite{Eveline}.  Otherwise, Proposition~\ref{p:existence} implies that
there are negative ambitoric compactifications with rational Delzant polytope
$\Delta$. As in step 1, we conclude that there are quartics $A,B$ defining a
formal extremal solution $\mathbf H=\mathbf H^-$ on $\Delta=\Delta_-$.

\smallbreak\noindent{\bf Case 3}. If $\Delta$ is intemperate, it is not
K-polystable by Lemma~\ref{l:unstable}. Otherwise, either step 1 or step 2
provides a formal extremal solution. This may not yield a positive definite
metric, but it can be used to compute the toric K-polystability criterion. In
the Calabi or product case, this has been done in~\cite{Eveline}; it remains
to consider in the regular ambitoric case.

Let $\mathbf H$ be the formal extremal solution given by the quartics $A, B$
as above. If $A$ is positive on $(\al_\1,\al_\2)$ and $B$ is positive on
$(\be_\1,\be_\2)$, then $\mathbf H$ is positive definite.
Hence~\eqref{Hfutaki} implies that $\cF_{\Delta,\Lab}(f)>0$ for $f\in
\mathcal{PL}(\Delta)$, unless $\Hess f=0$ on $\Delta$, i.e., $f$ is
affine. Hence the rational Delzant polytope is K-polystable.

To establish a converse, we consider special families of simple convex PL
functions determined by the ambitoric factorization. For any $x_0 \in
(\al_\1,\al_\2)$ consider the line segment $\{(x_0,y) : y \in
(\be_\1,\be_\2)\}$.  Under $\mu^\pm$ it transforms to a line segment $S_{x_0}$
in the interior of $\Delta_\pm$.  Let $u_{x_0}$ be a normal of $S_{x_0}$. It
is straightforward to check that ${\mathbf H}^\pm_{(x_0,y)}(u_{x_0},u_{x_0})$
is positive multiple of $A(x_0)$. Thus, if $(M,\omega)$ is analytically
relatively K-polystable with respect to toric degenerations,
then~\eqref{Sfutaki} implies $A(x_0)$ must be positive for any $x_0 \in
(\al_\1,\al_\2)$; the argument for $B(z)$ is similar.

\smallbreak\noindent{\bf Conclusion}. We conclude that (ii) and (iii) are
equivalent, and evidently (iii) implies (i).  The implication (i)
$\Rightarrow$ (ii) follows from~\cite[Theorem~1.3]{ZZ2}, and the final
assertion follows from the uniqueness of the extremal toric K\"ahler metrics,
modulo automorphisms, established in~\cite{Guan}.
\end{proof}

\begin{rem}  The general theory from \cite{Donaldson1} and \cite{ZZ1} implies
that in order to check the K-polystability of a rational Delzant polygon
$(\Delta,\Lab)$, it is only necessary to consider a particular kind of PL
convex function: the simple PL convex functions whose crease meets the
interior of the polytope $\Delta$. Theorem~\ref{thm:stability} shows that in
the case of a quadrilateral, it suffices to consider the cases that the crease
is either one of the diagonals or meets the polytope in a segment
corresponding to $\{ (x_0, y) : y \in (\beta_0, \beta_{\infty}) \}$ or
$\{(x,y_0) : x \in (\alpha_0, \alpha_{\infty})\}$ under the unique ambitoric
compactification given by the conic $\mathcal{C}(\Delta,\Lab)$, which may be
found by solving linear equations.

In the light of \cite{Donaldson1} and its extension to orbifolds
in~\cite{RT2}, when the rational Delzant polytope $\Delta$ has rational
vertices with respect to the dual lattice, one can also consider a weaker
version of \emph{algebraic} relative (toric) K-polystability by requiring that
$\cF_{\Delta,\Lab}(f) \ge 0$ for any \emph{rational} PL continuous convex
function $f$ with equality if and only if $f$ is an affine function.
Presumably, this condition corresponds to an algebro-geometric notion of
stability for the corresponding (log) variety.  A key observation in
\cite{Donaldson1} is that in the case of a rational polygon with vanishing
extremal vector field, the algebraic relative K-polystability with respect to
toric degenerations is equivalent to the analytic one. This phenomenon is well
demonstrated on our classification: if $\al_k, \be_k, r_{\al,k}, r_{\be,k}$
are all rational numbers as in Remark~\ref{rational-polytope} (so that the
vertices of $\Delta$ are rational with respect to the dual lattice) and if
$\cF_{\Delta,\Lab} > 0$ on rational PL convex functions which are not affine
on $\Delta$, we then conclude as in the proof of Theorem~\ref{thm:stability}
that $A(z)$ must be positive at any \emph{rational} point in $(\al_\1,
\al_\2)$. It follows that $A(z) \ge 0$ on $(\al_\1, \al_\2)$ with (possibly) a
repeated irrational root in this interval.  As the $\al_k$'s and $r_{\al,k}$'s
are rational, by the first order boundary conditions $A(z)$ is a (multiple of)
degree $4$ polynomial with rational coefficients with two simple (rational)
roots $\al_\1$ and $\al_\2$. In particular, any double root of $A$ (if any)
must be rational too, showing that $A(z)$ must be strictly positive on
$(\al_\1, \al_\2)$.  Similarly, $B(z)>0$ on $(\be_\1, \be_\2)$.

This provides a computational test for K-polystability of quadrilaterals which
is guaranteed to terminate in the unstable case. We will further use these
observations in Appendix~\ref{s:semistablity} to show that any compact convex
quadrilateral which is not a parallelogram can be made K-unstable by a suitable
choice of the affine normals $\Lab$.
\end{rem}
\begin{rem} In view of Remark~\ref{zero-measure}, the equivalence (ii)
$\Leftrightarrow$ (iii) of Theorem~\ref{thm:stability} extends to complements
of toric divisors in compact toric orbifolds (with $b_2=2$), for ambitoric
extremal K\"ahler metrics satisfying~\eqref{AB-boundary} with $r_{\al,k}$ or
$r_{\be,k}$ zero on omitted facets.
\end{rem}

\section{Examples}\label{s:examples}

Our results show, as in~\cite{Eveline}, that for any convex quadrilateral,
there is a nonempty open subset of scales for the normals such that the
corresponding toric $4$-orbifold has an extremal K\"ahler metric. By
considering data close to well-known Bochner-flat K\"ahler metrics, we shall
demonstrate this explicitly. We shall restrict attention to rational data in
the sense of Remark~\ref{rational-polytope}. More precisely, if $\al_k,\be_k$
and the coefficients of $q$ are rational, then the parameters $\dd$ and $\ad$
defining the quadrilateral are rational, and the normal scales $r_{\al,k}$,
$r_{\be,k}$ are constrained by a single rational linear relation.

A $4$-dimensional extremal K\"ahler metric with nonzero scalar curvature is
locally conformally Einstein iff it is Bach-flat, and globally so if the
scalar curvature is nonvanishing~\cite{De}; the compact smooth examples have
been classified~\cite{CLW,LeBrun1,LeBrun2a}, so we seek complete or compact
orbifold examples.

A $4$-dimensional K\"ahler metric is Bochner-flat iff it is selfdual
($W_-=0$); hence it is Bach-flat and locally conformally Einstein.  According
to R. Bryant~\cite{Bryant} such metrics exist on weighted projective planes
$\C P^2_{\wt_1,\wt_2,\wt_3}$ (where $\wt_1,\wt_2,\wt_3$ are positive integers
with no common factor), cf.~section~\ref{s:simplex}
and~\cite{Abreu1,ACGT}. Since $W^-=0$, there is some freedom in the choice of
negative complex structure $J_-$, and hence a family of ambitoric structures
compatible with a given Bochner-flat K\"ahler metric (note however, that $J_-$
is not globally defined).

\subsection{Bochner-flat ambitoric structures on weighted projective planes}
\label{s:weighted-proj-plane}

The K\"ahler metric $(g_+,J_+,\omega_+)$ of an ambitoric structure is
Bochner-flat if $A(z)=P(z)$, $B(z)=-P(z)$ for an arbitrary polynomial $P$ of
degree $\le 4$.  The parabolic case (with $q(z)=1$) has been studied
in~\cite{ACGT}: we now consider arbitrary $q$.

We set $P(z)=-(z-z_0)(z-z_1)(z-z_2)(z-z_3)$, where $z_0 < z_1 < z_2 < z_3$ and
$q(x,y)$ is positive on $[z_2,z_3]\times[z_1,z_2]$ (e.g., $q(z)$ positive on
$[z_1,z_3]$). Since $\Delta_+$ is a simplex, we are in the degenerate case of
section~\ref{s:ambi-orbifolds} where $\al_\1=\be_\2$, and we set $\be_\1=z_1$,
$\be_\2=z_2=\al_\1$ and $\al_\2=z_3$. The boundary conditions give
\begin{equation*}
r_{\be,\1}=-2P'(z_1),\quad r_{\be,\2}=r_{\al,\1}=-2P'(z_2), \quad
r_{\al,\2}=-2P'(z_3),
\end{equation*}
which we can ensure are rational by taking $z_0,\ldots z_3$ rational. By
Remark~\ref{rational-polytope}, taking $q$ also rational gives
condition~\eqref{rational} for the normals $u_1:=\p{\be_\1}/r_{\be,1}$,
$u_2:=\p{\al_\1}/r_{\al,1}$ and $u_3:=\p{\al_\2}/r_{\al,2}$. These normals are
$u_j =-\p{z_j}/2P'(z_j)$, which satisfy
\begin{equation*}
(z_1-z_0)q(z_2,z_3)u_1+(z_2-z_0)q(z_1,z_3)u_2+(z_3-z_0)q(z_1,z_2)u_3=0
\end{equation*}
and so the weights $\wt_1,\wt_2,\wt_3$ are a multiple of
$(z_1-z_0)q(z_2,z_3)$, $(z_2-z_0)q(z_1,z_3)$, $(z_3-z_0)q(z_1,z_2)$.  Any
weighted projective plane $\C P^2_{\wt_1,\wt_2,\wt_3}$ with distinct weights
has a Bochner-flat ambitoric structure of any type (parabolic, hyperbolic or
elliptic).

Since the scalar curvature $s_+$ of $g_+$ is an affine function of the
momenta, it attains its maximum and minimum values at the vertices of the
momentum simplex, which are the images of $(x,y)=(z_2,z_1)$, $(z_3,z_1)$ and
$(z_3,z_2)$. If we write, for $0< i<j\leq 3$, $P(z)=-(z-z_i)(z-z_j)p_{ij}(z)$,
then we compute from section~\ref{s:extremal} that
\begin{equation*}
s_+(z_j,z_i) = 3\frac{q(z_i)p_{ij}(z_j)-q(z_j)p_{ij}(z_i)}{z_j-z_i}.
\end{equation*}
We deduce (assuming $q(z)>0$ on $[z_1,z_3]$) that $s_+$ is positive at
$(z_3,z_1)$; it is also positive at $(z_3,z_2)$ for $z_2-z_1$ sufficiently
small.  On the other hand, at $(z_2,z_1)$, for $z_3-z_2$ small, $s_+$ changes
sign as a function of $z_0\in (-\infty,z_1)$, being negative at $z_0=z_1$, but
positive once $z_1-z_0$ is sufficiently large.

Under these conditions, $s_+$ is everywhere positive for $z_0\ll z_1$, and
hence $g_+$ is globally conformal to an Einstein hermitian metric $s_+^{-2}g$
of positive scalar curvature. On the other hand, as $z_0$ increases, $s_+$
becomes nonpositive on the preimage $(\mu^+)^{-1}(T)$ of a triangle
$T\subset\Delta_+$ containing the vertex $(z_2,z_1)$.  This preimage $N$ a
compact orbifold with boundary (the latter being the zero locus of $s_+$), but
it is straightforward to see that $N$ is covered by a compact manifold $\tilde
N$ with boundary, cf.~\cite{Bryant}.  Indeed let $C$ be the $2$-dimensional
cone defined by the two facets of $\Delta_+$ which bound $T$ and let $\Lambda$
be the lattice generated by the normals to the these facets; Delzant theory
identifies $(C, \Lambda)$ as the image by the momentum map of a (standard)
toric $\C^2$; the preimage $\tilde N$ of $T\subset C$ is the closure of a
bounded domain biholomorphic to the unit ball in $\C^2$.  The lift of
$g=s_+^{-2}g_+$ to $\tilde N\setminus \del \tilde N$ is a conformally compact,
Einstein hermitian metric of negative scalar curvature, which is complete
since $ds_+ \neq 0$ on $\del\tilde N$ (cf.~\cite{Anderson}).

\subsection{Extremal ambitoric compactifications}\label{s:ext-comp}

In order to obtain new examples, which have those of the previous subsection
as limiting cases, we let $P(z)$, $q(z)$ and $z_0<z_1<z_2<z_3$ be as before,
and consider rational $\al_k$ and $\be_k$ satisfying $z_1\approx
\be_\1<\be_\2\lesssim z_2\lesssim \al_\1<\al_\2\approx z_3$.  We now set
$A(z)=q(z)\pi_A(z)+P(z)$ and $B(z)=q(z)\pi_B(z)-P(z)$ where $\pi_A$ and
$\pi_B$ are quadratic polynomials uniquely determined by three rational
(affine) linear conditions: each is orthogonal to $q$, $A(\al_\1)=0=A(\al_\2)$
and $B(\be_\1)=0=B(\be_\2)$.  Note that $A(z)+B(z)=q(z)(\pi_A+\pi_B)(z)$ and
that $A(z)-B(z)=q(z)(\pi_A-\pi_B)(z) +2P(z)$.

For $\be_\1=z_1$, $\be_\2=z_2=\al_\1$, $\al_\2=z_3$, the unique solution is
$\pi_A=\pi_B=0$ and the quartics $A$ on $(z_3,z_2)$ and $B$ on $(z_2,z_1)$ are
positive and define a Bochner-flat extremal metric. Hence for a small
perturbation of the endpoints, $A$ and $B$ remain positive on
$(\al_\1,\al_\2)$ and $(\be_\1,\be_\2)$ respectively (having roots close to
$z_1,z_2,z_3$ and $z_0<z_1$). The boundary conditions $r_{\al,k}=-2A'(\al_k)$,
$r_{\al,k}=2B'(\be_k)$ give rational scales for the normals with the right
signs to obtain an extremal K\"ahler metrics over a rational Delzant
quadrilateral $\Delta_+$. Since $\be_\2$ and $\al_\1$ are very close, the
sides $\Fa_{\be,\2}$ and $\Fa_{\al,\1}$ are almost parallel, meaning that the
quadrilateral $\Delta=\Delta_+$ has parameter $\ad$ close to $-1$, but $\dd\in
(-1,1)\cap\Q$ is unconstrained.

The parametrization of these solutions by $P,\al_k,\be_k$ is not effective,
because $P(z)$ is only determined up to the addition of $q(z)\pi(z)$ with
$\pi$ orthogonal to $q$. This overcounting matches precisely with the
dimension of the space of rational Delzant quadrilaterals. By symmetry, we see
that for quadrilaterals with rational parameters $\dd$ and $\ad$, one of these
being sufficiently close to $\pm1$, there is a nonempty open subset of
rational normal scales---belonging, up to homothety, a nonempty open subset of
$\Q P^3$---for which the corresponding toric $4$-orbifold has an extremal
K\"ahler metric. There are thus infinitely many ambitoric extremal compact
$4$-orbifolds with $b_2=2$, depending on $5$ rational parameters.

\subsection{Conformally Einstein K\"ahler orbifolds and complete Einstein
metrics}\label{s:einstein}

A regular extremal ambitoric structure, given by quartic polynomials
$A=q\pi+P,B=q\pi-P$ is Bach-flat iff the quadratics $\pi$ and $C_q(P)$ (which
are both orthogonal to $q$) are linearly dependent. For fixed $q$, this is a
singular quadric hypersurface in the $\Q P^6$ of coefficients of $(\pi,P)$ up
to homothety.  Bochner-flat metrics are Bach-flat with $\pi=0$, and so the
quadric meets any open neighbourhood of $\pi=0$. Hence, as in the Bochner-flat
case, we obtain locally or globally conformally Einstein metrics according to
whether the scalar curvature $s_+$ of $g_+$ changes sign or is positive.

We can make this more explicit using the approach developed in the previous
two subsections, where $A=P$, $B=-P$ gives a known Bochner-flat K\"ahler
metric with nonzero scalar curvature.  Fix $\pi,\tilde\pi:=C_q(P)$ as a basis
for the quadratic polynomials orthogonal to $q$, and consider the equations
$A+B = \delta(\lambda\tilde\pi+ \mu\pi)q$, $C_q(A-B)=\gamma
(\lambda\tilde\pi+\mu \pi)$, $A(\al_k)=0=B(\be_k)$. For fixed
$(\delta,\gamma)\approx(0,1)$ and $z_0\approx \be_\1<\be_\2\lesssim
z_1\lesssim \al_\1<\al_\2\approx z_2$, this has a unique (and appropriately
positive) solution up to scale (with $\lambda\approx 2$ and $\mu\approx
0$). The solution depends rationally on $[\delta:\gamma]$ up to scale, hence
for given $\al_k,\be_k$, we have a one parameter family of Bach-flat ambitoric
orbifolds.

Positivity of $s_+$ can be obtained by a limiting argument, provided $P$ is
chosen so that the corresponding Bochner-flat metric
(in~\ref{s:weighted-proj-plane}) has positive scalar curvature. We thus obtain
infinitely many new examples of compact ambihermitian Einstein $4$-orbifolds
of positive scalar curvature. If instead we choose $P$ so that the
corresponding Bochner-flat metric has scalar curvature positive for
$(x,y)=(z_3,z_1), (z_3,z_2), (z_2,z_2)$, and negative for $(x,y)=(z_2,z_1)$,
the analysis in~\ref{s:weighted-proj-plane} generalizes to yield new complete
ambihermitian Einstein $4$-manifolds of negative scalar curvature.

We do not attempt to classify explicitly the data yielding Bach-flat (or
extremal) ambitoric compactifications, but examples are not confined to
limiting cases. For instance, let $q(z)=z$ and consider the quartics $A,B$
with parameters $(s,t)$ given by
\begin{align*}
A(z)&= tz^4+(s-1)(t+1)z^3-(st+4s+2t-2)z^2-2s(t-2)z\\
&=z(z-2)(tz^2+(st+s+t-1)z+s(t-2))\\
B(z)&= -tz^4+(s-1)(t-2)z^3+(st+4s+2t-2)z^2-2s(t+1)z\\
&=-z(z-1)(tz^2-(st-2s-2t+2)z-2s(t+1))
\end{align*}

In this family, the roots $z=0,1,2$ are fixed, which is a slightly special
choice because $q(z)$ vanishes at $z=0$, and so $(A+B)(0)=0$ is a consequence
of the extremality condition. The latter equation is satisfied by the family,
since
\begin{equation*}
(A+B)(z) = z(2t-1)\bigl((s-1)z^2-2s\bigr)
\end{equation*}
and the Bochner-flat case is $t=1/2$, with $A(z)=-B(z)=\frac12
z(z-1)(z-2)(z+3s)$.  With three roots fixed, the extremal family is
parametrized by an open subset of $\Q P^3$, and $s,t$ are affine coordinates
on the quadric surface given by the Bach-flatness condition $a_1a_3=b_1b_3$ on
the coefficients of $A$ and $B$.

For $s>0$, after negating $A,B$, we are in the situation considered before,
with $z_0=-3s$, $z_1=0$, $z_2=1$ and $z_3=2$: the Bochner-flat metric with
$\be_\1=0,\be_\2=1$ and $\al_\2=2$ has positive scalar curvature.  Varying $t$
in $[2/3(s+2),1/2]$, $A$ has a root $1<\al_\1<2$, yielding Bach-flat examples
over $[\al_\1,2]\times[0,1]$. We also get such examples for $s<-2/3$ using a
slight variant of the same approach in which $z_0=-3s>2$.  Here $A,B$
(unnegated) satisfy positivity on $[\al_\1,2]\times[0,1]$ with $1<\al_\1<2$
provided $-1<s<-2/3$ and $1/2<t<2/3(s+2)$, or $s<-1$ and $1/2>t>2/3(1-s)$.
The quadrilaterals corresponding to these examples have moduli $\dd=1/2$ and
$\ad=-1/\al_\1$.

Similar examples to these can be found by considering the Bochner-flat metrics
on $[2,-3s]\times [1,2]$ for $s<-2/3$. However, there are plenty of examples
which are not deformations of the Bochner-flat family in this way. Other
convenient families are given by $(s,t)$ coordinate lines tangent to the
discriminant of $A$ or $B$, so that one of the quartics splits over $\Q(s)$ or
$\Q(t)$. These lines are $s=0,-1,1/3,2/3,2,\infty$ and $t=0,-1,1/2,2,\infty$.
Many of these only yield singular or indefinite examples. However, after
multiplying $A$ and $B$ by $-1/t$, we have, for $s=1/3$, $t=2/(1+3u)$,
\begin{align*}
A(z)&= -z^4 + (u+1) z^3 -(u-2) z^2- 2 u z = -(z+1)z(z-2)(z-u)\\
B(z)&= z^4- 2u z^3 +(u-2) z^2 +(u+1) z = z(z-1)\bigl(z^2-(2u-1)z-(u+1)\bigr),
\end{align*}
and for $u>2$, $A(z)$ is positive on $(2,u)$ while the nontrivial roots of
$B(z)$ have opposite sign and sum at least $3$, so that $B(z)$ is positive on
$(0,1)$. Hence after rescaling, we obtain Bach-flat examples on
$[2,\al_\2]\times[0,1]$ with $\al_\2=u>2$. The quadrilaterals corresponding to
these examples have moduli $\dd=1/\al_\2$ and $\ad=-1/2$.

For a final example, let $t=0$, scale by $-1/(s-1)$ and set $s=u/(u-1)$ so
that
\begin{align*}
A(z)&= -z^3 + 2(u+1) z^2 - 4 u z = -z(z-2)(z-2u)\\
B(z)&= 2z^3 - 2(u+1) z^2 + 2 u z = 2z(z-1)(z-u).
\end{align*}
For $\frac12<u<1$ this yields Bach-flat examples on
$[\al_\1,2]\times[0,\be_\2]$ with $\be_\2=u$ and $\al_\1=2u$, while for $u>1$
we obtain instead examples on $[2,\al_\2]\times [0,1]$ with $\al_\2=2u$.

\subsection{Hirzebruch orbifold surfaces}

Another interesting class of examples are the toric orbifolds for which the
rational Delzant polytope is a trapezium but not a parallelogram.  It is shown
in~\cite{Eveline} that these are precisely the toric orbifolds which admit
toric K\"ahler metrics of Calabi type. Up to an orbifold covering, these
orbifolds are fibre bundles of the form $M = P\times_{S^1} \C P_{w_1,w_2} \to
\C P_{v_1,v_2}$, where the fibre and the base are weighted projective lines
$\C P_{w_1, w_2}$ and $\C P_{v_1, v_2}$ respectively, and $P$ is a principal
$S^1$-orbibundle over $\C P_{v_1, v_2}$. It follows from~\cite{Eveline} that
such a Hirzebrich orbifold surface admits an extremal K\"ahler metric of
Calabi type (in some and hence any K\"ahler class) if and only if the base
admits a CSC K\"ahler metric, i.e. $v_1=v_2=1$. In our formalism, this
corresponds to the case when $\Delta$ is an equipoised trapezium, and
$(M,\omega, \T)$ is automatically K-polystable with respect to toric
degenerations~\cite{Eveline}.

When $v_1 \neq v_2$, the corresponding trapezia are not equipoised and
extremal K\"ahler metrics must be obtained from the hyperbolic ambitoric
ansatz.  Rational Delzant trapezia which are close to but different from
equipoised ones provide such examples. On the other hand, one can readily find
K-unstable trapezia by violating the condition $(1+|\dd|)(1+|\ad|)<2$ in
Lemma~\ref{l:unstable}; then there exist affine normals such that the
trapezium is intemperate. More generally, Proposition~\ref{unstable} shows
that any quadrilateral which is not a parallelogram is K-unstable for some
choice of affine normals.

\appendix

\section{Factorization structures}\label{s:fs}

The idea behind factorization structures is to separate variables using a
rational map from a product of projective lines to projective space. In order
to explain our terminology, and place our constructions in a natural context,
we discuss this idea in greater generality than we need in the body of the
paper.

\subsection{Factorization for rational Delzant polytopes}

Let $\torh$ be a real vector space of dimension $m+1$ and
$\Delta\sub\Proj(\torh^*)$ the image of a strictly convex cone in $\torh^*$.

\begin{defn} A \emph{factorization structure} over $\Proj(\torh^*)$ is an
injective linear map $\fs\colon \torh\to V_1^*\otimes V_2^*\otimes\cdots\otimes
V_m^*$, where $V_1,\ldots V_m$ are $2$-dimensional real vector spaces, such
that composite $S_\fs$ of the \emph{Segre embedding}
\begin{align*}
\Proj(V_1)\times\cdots\times\Proj(V_m)&\to \Proj(V_1\otimes\cdots\otimes V_m)\\
([v_1],\ldots [v_m])&\mapsto [v_1\otimes\cdots\otimes v_m]
\end{align*}
with the dual projection $\Proj(V_1\otimes\cdots\otimes V_m) \dashrightarrow
\Proj(\torh^*)$ maps any coordinate hyperplane ($[v_j]$ constant for some $j$)
into a hyperplane in $\Proj(\torh^*)$. We say $\fs$ is \emph{compatible} with
$\Delta$ (or a factorization structure \emph{for} $\Delta$) if $S_\fs$ maps a
product $I_1\times \cdots\times I_m$ of intervals $I_j\sub\Proj(V_j)$
bijectively onto $\Delta\sub\Proj(\torh^*)$.
\end{defn}
Note that the coordinate hyperplane condition is automatic for $m\leq 2$
(since $S_\fs$ maps coordinate lines to lines). Also $S_\fs$ maps the boundary
of $I_1\times \cdots\times I_m$ to the boundary of $\Delta$, so $\Delta$ has
at most $2m$ facets. In our application, $\Delta$ and the intervals $I_j$
will be closed, so $S_\fs$ is also a bijection between boundaries.

If $\fs$ is understood, we typically regard it as an inclusion and identify
$\torh$ with its image $\fs(\torh)$ in $V_1^*\otimes\cdots\otimes V_m^*$. The
examples we consider are all of the following form.

\begin{exs} Let $(m_1,\ldots m_k)$ be a partition of $m$ and let $W_1,\ldots
W_k$ be $2$-dimensional vector spaces. Then $\fs\colon \torh\to
\bigotimes_{i=1}^k \bigl(\otimes^{m_i} W_i^*\bigr)$ is a
\emph{Segre--Veronese} factorization structure of \emph{type} $(m_1,\ldots
m_k)$ iff $\fs(\torh)=\sum_{i=1}^k \spn{\bp_1}\otimes\cdots \otimes
S^{m_i}W_i^*\otimes \cdots \otimes \spn{\bp_k} \sub \bigotimes_{i=1}^k S^{m_i}
W_i^*$ for some decomposable $\bp_i={\gavec_i}^{\odot m_i}\in S^{m_i} W_i^*$
(for $j=1\ldots k$, $\gavec_j\in W_j^*$).

The map $S_\fs\colon \Proj(W_1)^{m_1}\times\cdots\times\Proj(W_k)^{m_k}\to
\Proj(\torh^*)$ sends a coordinate hyperplane with one component equal to
$[\alvec_j]\in \Proj(W_j)$ to the hyperplane in $\Proj(\torh^*)$ dual to
$[\bp_1\otimes\cdots \otimes {\thvec_j}^{\odot m_j} \otimes\cdots \otimes
  \bp_k]\in \Proj(\torh)$, where $\ker\thvec_j=\spn{\alvec_j}$. This is an
element in the image of the (dual) mixed \emph{Segre--Veronese embedding}
$\Proj(W_1^*)\times\cdots\times\Proj(W_k^*)\to \Proj\bigl(\bigotimes_{i=1}^k
S^{m_i} W_i^*\bigr)$.
\end{exs}

The extreme partitions $(1,1,\ldots 1)$ and $(m)$ correspond to pure Segre and
Veronese embeddings respectively. For toric $4$-orbifolds $(m=2)$, these are
the only cases.

\subsection{Factorizations on toric $4$-orbifolds}\label{s:fs4}

When $m:=\dim\tor=2$, the image of any factorization structure $\torh\to
W_1^*\otimes W_2^*$ ($\dim W_i=2$) is the annihilator of an element $\chi$ of
$W_1\otimes W_2$. If $\chi=\gavec_1\otimes \gavec_2$ is decomposable, the
image of $\torh$ is ${\gavec_1}^0\otimes W_2^* + W_1^* \otimes {\gavec_2}^0$,
where ${\gavec_j}^0\sub W_j^*$ is the annihilator of $\gavec_j\in W_j$. If
not, $\chi$ defines an isomorphism $W_1^*\to W_2$, and hence, fixing a nonzero
area form on $W_1$, an isomorphism $W_1\to W_2$. Using this to identify $W_1$
with $W_2$ and dropping subscripts, the factorization structure $\torh \to
W^*\otimes W^*$ has image annihilating $\Wedge^2W\subset W\otimes W$, i.e.,
equal to $S^2W^*$.

Thus, up to isomorphism, any factorization structure is Segre--Veronese of
type $(1,1)$ or $(2)$; these are the Segre and Veronese factorizations used in
the paper.  In the Segre case, $\Proj(W_1)\times\Proj(W_2) \dashrightarrow
\Proj(\torh^*)$ is projection away from the point $[\gavec_1\otimes\gavec_2]$
on the quadric surface in $\Proj(W_1\otimes W_2)$; this is the famous
birational map identifying the blow-up of $\Proj(W_1)\times\Proj(W_2)$ at
$([\gavec_1],[\gavec_2])$ with the blow-up of $\Proj(\torh^*)$ at two points.
In the Veronese case, the map $\Proj(W)\times\Proj(W)\dashrightarrow
\Proj(\torh^*)$ is projection away from a point \emph{off} the quadric
surface, which is a branched double cover over a conic.

\section{The semistability surface}\label{s:semistablity}

In this appendix we consider the dependence of the toric K-polystability
condition (and hence the existence of extremal metrics) on the rational
Delzant quadrilateral $(\Delta,\Lab)$, which is determined by a positivity
property of its Futaki functional $\cF_{\Delta,\Lab}$ on the space
$\mathcal{PL}(\Delta)$ of PL convex functions. For a fixed quadrilateral
$\Delta$, $\cF_{\Delta,\Lab}$ depends \emph{linearly} on inverse scales
$r_{\al,k}$ and $r_{\be,k}$ ($k=\1,\2$) for the normals $\Lab$. We can thus
parameterize a choice of normals, up to overall scale, by a point
$[r_{\al,\1},r_{\al,\2}, r_{\be,\1},r_{\be,\2}]$ in the positive quadrant of
$\Q\Proj^3$, and a given choice will be K-polystable provided this point lies
in the open subset $R_\Delta$ of $\R\Proj^3$ on which the Futaki functional
has constant sign (with only trivial zeros). It follows from~\cite{Eveline}
that $R_\Delta\sub \R\Proj^3$ has nonempty intersection with the positive
rational quadrant.

We refer to the boundary $S_\Delta$ of $R_\Delta$ as the \emph{semistability
  surface} of $\Delta$. At any point in $S_\Delta$ there must be a nontrivial
Futaki invariant which is zero, and since it is linear on $\R\Proj^3$, this
Futaki invariant defines a supporting hyperplane for $R_\Delta$. Consequently,
we can hope to describe the \emph{dual surface} of $S_\Delta$ explicitly in
terms of Futaki invariants, and then consider its dependence on $\Delta$.

It suffices to consider Futaki invariants defined by simple PL convex
functions with a crease meeting opposite sides of $\Delta$ (including the
diagonals of $\Delta$ as extreme cases): our main results show that the
positivity of these invariants is not only necessary, but sufficient, for
toric K-polystability. These invariants are still quite formidable in
complexity, but are amenable to computation.

In our computations, we drop overall positive constants, such as the constant
$c(\dd,\ad)=24/\bigl(4-(1-\dd^2)(1-\ad^2)\bigr)$ appearing in the extremal
affine function, and employ the dihedral symmetry (which acts projectively on
$\Delta$) to minimize duplication of effort. This symmetry group, determined
by its action on vertices, is generated by a ``vertical'' reflection
(cf. Figure 1) $\sigma_\al\colon v_{\1\1}\mapsto v_{\2\1}, v_{\1\2}\mapsto
v_{\2\2}$ and a diagonal reflection $\sigma_\dd\colon v_{\1\1}\mapsto
v_{\2\2}$ fixing $v_{\1\2}$ and $v_{\2\1}$, so that
$\rho:=\sigma_\dd\circ\sigma_\al$ is a $\frac{\pi}{2}$ rotation, which acts on
vertices and edges by
\begin{gather*}
v_{\1\1}\mapsto v_{\2\1} \mapsto v_{\2\2} \mapsto v_{\1\2} \mapsto v_{\1\1},\\
\Fa_{\al,\1}\mapsto \Fa_{\be,\1}\mapsto \Fa_{\al,\2}\mapsto \Fa_{\be,\2}
\mapsto \Fa_{\al,\1}.
\end{gather*}
The remaining nonidentity elements consist of the other diagonal reflection
$\sigma_\ad=\sigma_\al\rho=\sigma_\al\sigma_\dd\sigma_\al$, the ``horizontal''
reflection $\sigma_\be=\rho\sigma_\dd=\sigma_\dd\sigma_\al\sigma_\dd$,
$\rho^2=\sigma_{\ad}\sigma_{\dd}$ and $\rho^3=\rho^{-1}$. The dihedral action
is only affine after permuting the labelling, so there is an induced action on
the parameters $(\dd,\ad)$ which determine the affine class of $\Delta$ as a
labelled quadrilateral. Explicitly, we have
${\sigma_\al}^*(\dd,\ad)=(\ad,\dd)$ and ${\sigma_\dd}^*(\dd,\ad)= (-\dd,\ad)$,
and hence $\rho^*(\dd,\ad)=(\ad,-\dd)$, ${\sigma_\be}^*(\dd,\ad)=(-\ad,-\dd)$,
${\sigma_\ad}^*(\dd,\ad)=(\dd,-\ad)$.

The two families of simple PL convex functions whose Futaki invariants we need
are $f^\al_{\bs,\bt}$, with a crease joining $\bs\in \Fa_{\al,\1}$ to $\bt\in
\Fa_{\al,\2}$, and $f^\be_{\bs,\bt}$, with a crease joining the $\bs\in
\Fa_{\be,\1}$ to $\bt\in \Fa_{\be,\2}$. We write
\begin{equation*}
\cF _{\Delta,\Lab} (f^\al_{\bs,\bt})
= \sum_{j\in \{\al,\be\}\times\{\1,\2\}} A_j (\dd,\ad,\bs,\bt) r_j,\quad
\cF _{\Delta,\Lab} (f^\be_{\bs,\bt}) 
= \sum_{j\in \{\al,\be\}\times\{\1,\2\}} B_j (\dd,\ad,\bs,\bt) r_j
\end{equation*}
for functions $A_j,B_j$ related by the following symmetries:
\begin{align*}
A_{\al,\1}(\dd,\ad,\bs,\bt)=A_{\al,\2}(-\dd,-\ad,\bt,\bs)&=
B_{\be,\1}(\dd,-\ad,\bs,\bt)=B_{\be,\2}(-\dd,\ad,\bt,\bs)\\
B_{\al,\1}(\dd,\ad,\bs,\bt)=B_{\al,\2}(-\dd,-\ad,\bt,\bs)&=
A_{\be,\1}(\dd,-\ad,\bs,\bt)=A_{\be,\2}(-\dd,\ad,\bt,\bs)\\
A_{\al,\1}(\dd,\ad,\bs,\bt)=A_{\al,\1}(-\ad,-\dd,\bs^*,\bt^*),&\quad
B_{\al,\1}(\dd,\ad,\bs,\bt)=B_{\al,\1}(-\ad,-\dd,\bt^*,\bs^*),
\end{align*}
where the star denotes the (harmonic) inversion interchanging the diagonals
$l_\dd$ and $l_\ad$ and fixing the midpoints of the sides.  It thus suffices
to compute $A_{\al,\1}(\dd,\ad,\bs,\bt)$ and $B_{\al,\1}(\dd,\ad,\bs,\bt)$. To
parameterize the points $\bs,\bt$ on the edges: a convenient reference space
is the pencil of lines through the intersection $O$ of the diagonals; this is
a projective line with four harmonically separated marked points (the two
diagonals $l_\dd$ and $l_\ad$ and the two lines $l_\al$ and $l_\be$ joining
$O$ to intersection points of opposite sides). In the concrete description of
\S\ref{s:quad}, the diagonals are $x=\pm y$ and the other lines are $x=0$ and
$y=0$. We set $l_\dd=0=[1:0]$, $l_\ad=\infty=[0:1]$, so that we can use
positive homogeneous coordinates $\bs=[s_0:s_1],\bt=[t_0:t_1]$ on the edges;
this fixes $s_j$ and $t_j$ up to independent scales. Each $A, B$ is a
polynomial of bidegree $(3,3)$ in $\bs, \bt$.

We compute that $A_{\al,\1}(\dd,\ad,\bs,\bt)$ is given (up to normalization)
by
\begin{align*}
&\qquad\qquad\qquad 2\;\;(s_0+s_1)\;\;\bigl( (1+\ad) t_0 + (1+\dd) t_1\bigr)\;\;
\bigl( (1-\ad) s_0 t_0 - (1-\dd) s_1 t_1  \bigr)^2\\
&- (1-\dd) (1-\ad)\; \bigl( (1-\ad) s_0 + (1-\dd) s_1\bigr)\;(t_0+t_1)\;
\bigl( (1+\ad) s_0 t_0  - (1+\dd) s_1 t_1 \bigr)^2,
\end{align*}
whereas $B_{\al,\1}(\dd,\ad,\bs,\bt)$ is given (up to normalization) by
\begin{align*}
&(1-\dd)(1+\ad)(1+\dd-\ad+\dd\ad)\,                      s_0^3    t_0^3\\
+2&(1 + \dd\ad + 1+\dd-\ad+\dd\ad)\,                     s_0^2s_1 t_0^3\\
+2&(1-\dd)(1+\ad)\bigl(4 - (1-\dd^2) (1-\ad^2)\bigr)\,    s_0^3   t_0 t_1^2\\
+&(1-\dd)(1+\ad)
    \bigl((1+\dd)^2+(1-\ad)^2+(1+\dd\ad)(2+\dd-\ad)\bigr)\,s_0^3  t_0^2 t_1\\
+\bigl(&(1+\dd\ad)
    (10+\dd-\ad+(1+\dd)(1-\ad))+(1+\dd)^2+(1-\ad)^2\bigr)\,s_0^2s_1 t_0^2t_1\\
+4&(1 + \dd \ad)\,                                        s_0s_1^2 t_0^2t_1\quad
+\quad (40 - 12(1-\dd^2)(1-\ad^2) - 8 \dd\ad)\,            s_0^2s_1 t_0t_1^2\\
+\bigl(&(1+\dd\ad)
    (10-\dd+\ad+(1-\dd)(1+\ad))+(1-\dd)^2+(1+\ad)^2\bigr)\,s_0 s_1^2 t_0 t_1^2\\
+&(1+\dd)(1-\ad)
    \bigl((1-\dd)^2+(1+\ad)^2+(1+\dd\ad)(2-\dd+\ad)\bigr)\,s_0s_1^2   t_1^3\\
+2&(1+\dd)(1-\ad)\bigl(4 - (1-\dd^2) (1-\ad^2)\bigr)\,     s_0^2s_1   t_1^3\\
+2&(1 + \dd \ad + 1-\dd+\ad+\dd\ad)\,		           s_1^3 t_0 t_1^2\\
+&(1+\dd)(1-\ad)(1-\dd+\ad+\dd\ad)\,	                   s_1^3    t_1^3.
\end{align*}

The latter expression typifies the contribution to the Futaki invariant from a
side which does \emph{not} meet the crease. Only the first and last two
coefficients can be negative, and this can happen if and only if
$B_{\alpha,0}(\dd,\ad,0,0)$ (i.e., $1+\dd-\ad+\dd\ad$) or
$B_{\alpha,0}(\dd,\ad,1,1)$ (i.e., $1-\dd+\ad+\dd\ad$) is negative. This means
that the expression already contributes negatively to the Futaki invariant of
one of the diagonals, in which case the normals can be scaled to make the
quadrilateral intemperate.

In contrast, the expression for $A_{\alpha,0}$ typifies the contribution to
the Futaki invariant from a side which \emph{does} meet the crease. Here we
have found a surprising factorization which shows that the contribution can be
negative even when $(1+|\dd|)(1+|\ad|)<2$ (so the quadrilateral is temperate
for any choice of normals). We deduce the following.

\begin{prop}\label{unstable} Let $\Delta$ be a compact convex quadrilateral.
\begin{bulletlist}
\item If $\Delta$ is a parallelogram, then for any affine normals $\Lab$,
  $(\Delta,\Lab)$ is K-polystable.
\item If $\Delta$ is not a parallelogram, then there exist choices for the
  affine normals $\Lab$ such that $(\Delta,\Lab)$ is K-polystable as well as
  choices such that $(\Delta,\Lab)$ is K-unstable.
\end{bulletlist}
\end{prop}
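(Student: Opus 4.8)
The plan is to handle the parallelogram and the non-parallelogram cases separately, reading everything off from the bidegree-$(3,3)$ polynomials $A_{\al,\1}$ and $B_{\al,\1}$ computed above together with the dihedral symmetries that express all four side-contributions in terms of these two. Throughout I use that, by the reduction recalled at the start of this appendix, it suffices to test simple PL convex functions whose crease is a diagonal or joins opposite sides, and that $\cF_{\Delta,\Lab}$ is \emph{linear} in the normals $r_{\al,k},r_{\be,k}$.

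For a parallelogram ($\dd=\ad=0$) the cleanest argument is conceptual: by \S\ref{s:quad} the orbifold is a product of two weighted projective lines, each of which carries an $S^1$-invariant extremal K\"ahler metric in every K\"ahler class; a Riemannian product of extremal metrics is extremal, so $(\Delta,\Lab)$ admits an extremal metric for \emph{every} choice of normals, whence K-polystability by the implication (i)$\Rightarrow$(ii) of Theorem~\ref{thm:stability}. I would then confirm this directly from the formulae: at $\dd=\ad=0$ the expression for $A_{\al,\1}$ collapses to $(s_0+s_1)(t_0+t_1)(s_0t_0-s_1t_1)^2\ge 0$, while every coefficient of $B_{\al,\1}$ becomes strictly positive. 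By the symmetry relations this means that for a parallelogram every side \emph{meeting} a crease contributes nonnegatively and every side \emph{not} meeting it contributes strictly positively; since the normals are positive and $\eaf(v_0)>0$ at the common diagonal midpoint $v_0=v_\dd=v_\ad$ (Lemma~\ref{l:unstable}), the Futaki invariant of every admissible test function is strictly positive unless the function is affine.

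For a quadrilateral that is \emph{not} a parallelogram, the existence of K-polystable normals is the easy half: every convex quadrilateral admits some choice of normals carrying an extremal K\"ahler metric (\S\ref{s:examples}, following \cite{Eveline}, e.g.\ the equipoised ones), and such normals are K-polystable by Theorem~\ref{thm:stability}. The substance is the existence of K-unstable normals, and here the idea is to exploit the factorization of $A_{\al,\1}$ as a difference of two manifestly nonnegative products. Choosing $\bs=[s_0:s_1]$, $\bt=[t_0:t_1]$ with positive entries so that $(1-\ad)s_0t_0=(1-\dd)s_1t_1$ annihilates the first product; the surviving second term is then a negative multiple of $(\ad-\dd)^2(s_0t_0)^2$ times positive quantities, so $A_{\al,\1}(\dd,\ad,\bs,\bt)<0$ whenever $\ad\ne\dd$. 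As $A_{\al,\1}$ is exactly the coefficient of $r_{\al,\1}$ in $\cF_{\Delta,\Lab}(f^\al_{\bs,\bt})$ and the other three coefficients are finite, taking $r_{\al,\1}$ large relative to the remaining positive normals forces $\cF_{\Delta,\Lab}(f^\al_{\bs,\bt})<0$. When instead $\ad=\dd$ (so $\dd=\ad\ne 0$ for a non-parallelogram), the symmetry $B_{\be,\1}(\dd,\ad,\bs,\bt)=A_{\al,\1}(\dd,-\ad,\bs,\bt)$ reduces to the previous case with $-\ad\ne\dd$, and scaling $r_{\be,\1}$ in $\cF_{\Delta,\Lab}(f^\be_{\bs,\bt})$ again yields a negative invariant. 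Since $(\dd,\ad)\ne(0,0)$ forces $\ad\ne\dd$ or $\ad\ne-\dd$, one of the two cases always applies.

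Finally, since $\cF_{\Delta,\Lab}$ is linear in the normals and negativity is an open condition, the destabilising normals may be chosen rational and rescaled to span a lattice (Remark~\ref{rational-polytope}), so they genuinely define a K-unstable rational Delzant quadrilateral. The main obstacle is this K-unstable direction: the delicate point is recognising that the factored form of $A_{\al,\1}$ allows a single-side contribution to be negative even in the temperate range $(1+|\dd|)(1+|\ad|)<2$, where instability cannot be produced from the diagonals via Lemma~\ref{l:unstable}; one must also verify that the chosen $\bs,\bt$ are interior points of two opposite edges, so that $f^\al_{\bs,\bt}$ is an admissible test function whose dominant contribution can be driven negative by rescaling one normal while keeping all labels positive.
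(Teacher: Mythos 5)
Your proof is correct and follows essentially the same strategy as the paper: for the stability half, quote the straightforward/product cases, and for instability, use the displayed factorization of $A_{\al,\1}$ to choose a crease joining opposite sides on which this coefficient is negative, then rescale $r_{\al,\1}$ (and use the symmetry relating $B_{\be,\1}$ to $A_{\al,\1}(\dd,-\ad,\cdot)$ when $\dd=\ad\neq 0$). The one substantive difference is your choice of crease: you impose $(1-\ad)s_0t_0=(1-\dd)s_1t_1$, which provably annihilates the first (nonnegative) product and leaves a term that is a negative multiple of $(\ad-\dd)^2$, whereas the paper's printed substitution $s_0=(1-\ad)t_1$, $s_1=(1-\dd)t_0$ makes \emph{both} squared factors proportional to $(\dd-\ad)^2$ and in general fails to make $A_{\al,\1}$ negative (e.g.\ for $\dd=0$, $\ad=\tfrac12$ it is positive for all $t_0,t_1>0$). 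Your condition is exactly what the paper's substitution becomes after interchanging the roles of $\dd$ and $\ad$, so you have in effect corrected a typo in the published argument; your additional remarks (explicit positivity of the coefficients at $\dd=\ad=0$, and the rational perturbation needed so the destabilizing normals span a lattice) fill in details the paper leaves implicit.
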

\begin{proof} The stability results are straightforward~\cite{Eveline}, but the
instability results stated in~\cite{Eveline} are incorrect: by
Example~\ref{orthotoric}, equipoised rational Delzant quadrilaterals are
K-polystable.  However, if $\Delta$ is not a parallelogram, then either
$\dd\neq \ad$ or $\dd\neq -\ad$. In the former case, put $s_0=(1-\ad)t_1$ and
$s_1=(1-\dd)t_0$ in $f^{\al}_{\bs,\bt}$ so that $A_{\al,\1}$ is negative. Then
$\cF_{\Delta,\Lab}(f^{\al}_{\bs,\bt})$ can be made negative by taking
$r_{\al,\1}$ large relative to the other inverse normals.  When $\dd\neq -\ad$
a similar argument applies to $\cF_{\Delta,\Lab}(f^{\smash\be}_{\bs,\bt})$ and
$B_{\be,\1}$.
\end{proof}

\section{Link with CR and sasakian $5$-manifolds}\label{s:CR}

There are well known connections between symplectic and K\"ahler geometry in
dimensions four and contact, CR and sasakian geometry in dimension
five~\cite{BGbook,Lerman,Sparks}. In particular, quasiregular Sasaki--Einstein
$5$-manifolds have K\"ahler--Einstein orbifolds as quotients by the Reeb
vector field, and this provides one way of constructing them. As observed by
D.~Martelli and J.~Sparks~\cite{Martelli-Sparks,Sparks}, the Sasaki--Einstein
manifolds of J.~Gauntlett, D.~Martelli, J.~Sparks, D.~Waldram~\cite{GMSW} and
M.~Cvetic, H.~Lu, D.~Page, and C.~Pope~\cite{CLPP} have quotients which are
of Calabi type and orthotoric respectively.

The general ambitoric context does not provide further K\"ahler--Einstein
examples, but the extremal metrics may be used to continuous families of
extremal sasakian 5-manifolds, as well as Reeb directions which do not admit
transversal extremal metrics (cf.~\cite{BGS,Eveline,Eveline2}).

\subsection{Contact, CR and sasakian structures}

Recall that a contact manifold is an odd dimensional manifold $N$ with a
maximally non-integrable codimension one distribution $\cH\subset TN$, i.e.,
the Lie bracket $(X,Y)\mapsto [X,Y] \mod \cH$ defines a nondegenerate
$TN/\cH$-valued $2$-form $\Omega$ on $\cH$ called the \emph{Levi form}. We
assume that the line bundle $TN/\cH$ is oriented; positive sections $\eta$ of
the \emph{contact line bundle} $(TN/\cH)^*\subset T^*N$ are called
\emph{contact forms}. Such a contact form has pointwise kernel $\cH$ and
induces a unique vector field $K$ with $\eta(K)=1$ and $\cL_K\eta=0$, called
the \emph{Reeb vector field} of $\eta$.

An \emph{almost CR structure} is a complex structure $J$ on $\cH$ such that
the Levi form is $J$-invariant, and $(N,\cH,J)$ is said to be a \emph{CR
manifold of Sasaki type} if there is a contact form $\eta$ such that
\begin{equation*}
g=dr^2+r^2(\d\eta(\cdot,J\cdot)+\eta^2), \qquad\omega=d(r^2\eta)= 2r\,
dr\wedge\eta+r^2\,\d\eta
\end{equation*}
is a K\"ahler metric on the cone $N\times\R^+$. The corresponding metric
$g_\eta=\d\eta(\cdot,J\cdot)+\eta^2$ on $M$ is called compatible sasakian
metrics. On a CR manifold of Sasaki type the contact forms $\eta$ giving rise
to compatible sasakian metrics are those for which $(\cH,J,\d\eta|_\cH)$ is
invariant under the Reeb vector field and descends to a K\"ahler structure on
local quotients by $K$; it is called the \emph{transverse K\"ahler geometry}.
The sasakian structure is said to be \emph{quasiregular} if the quotient by
$K$ is an orbifold.

\subsection{CR structure associated to positive ambitoric metrics}

We observe here that for fixed $A(z)$ and $B(z)$, the ambitoric K\"ahler
metrics $(g_+,\omega_+)$ we obtain form a family of sasakian metrics
compatible with a fixed $5$-dimensional CR-structure: this is similar to the
well-known identification of Bochner-flat K\"ahler metrics with sasakian
structures compatible with the standard CR structure on an odd-dimensional
sphere~\cite{Webster}.

Suppose that $(M,g_\pm,J_\pm,\omega_\pm,\tor)$ be a regular ambitoric
$4$-orbifold. Then on the union $M^0$ of the generic orbits the coordinates
$(\bx,\by,\ang)$ provide a diffeomorphism of $M^0$ with $D^0\times
\tor/2\pi\Lambda$ for a domain $D^0$ in (an affine patch of) $\Proj(W)\times
\Proj(W)$. Recall that $\tor\cong S^2W^*/\spn{q}$ and the space of
hamiltonians $\torh_+$ is isomorphic to $S^2W^*$. By passing to the
universal cover of of $\tor/2\pi\Lambda$, or introducing a lattice
$\tilde\Lambda\subset \torh_+$ covering $\Lambda$, we can pull back the
K\"ahler structure along $\pi_q\colon N^0=D^0\times \torh_+\to D^0\times
\tor$. Then
\begin{equation*}
\pi_q^*\omega_+ =\d\eta_q, \quad\text{where}\quad
\eta_q = - \frac{\ip{\d\ang,\bx\otimes \by}}{q(\bx,\by)},
\end{equation*}
where $\d\ang$ is the tautological $\torh_+=S^2W^*$ valued 1-form on the
5-manifold.  The kernel $\cH\cong \pi_q^*TM^0$ of $\eta_q$, together with
$J_+$ defines a CR structure of sasakian type on $N^0$. The Reeb field of the
contact form $\eta_q$ is
\begin{equation*}
K_q= -\ip{q,X} =
-(q_0 \partial_{t_0} + q_1 \partial_{t_1} + q_2 \partial_{t_2}),
\end{equation*}
where $X\in \torh_+{}^{\!*}\otimes C^\infty(N^0,TN^0)$ is dual to $\d\ang$, and
the corresponding sasakian metric is $g_+ + {\eta_q}^2$. Keeping the CR
structure fixed, we now rescale the contact form and define
\begin{align*}
\eta&=\frac{q(\bx,\by)}{\kappa(\bx,\by)}\eta_q=
- \frac{\ip{\d\ang,\bx\otimes \by}}{\kappa(\bx,\by)}\quad\text{with}\\
\d\eta&= \frac{- \d\bx\wedge \ip{\d\ang,\by\otimes \by}
+ \d\by \wedge \ip{\d\ang,\bx\otimes \bx}}{\kappa(\bx,\by)}.
\end{align*}
The corresponding Reeb vector field is 
\begin{equation*}
K=-\ip{x\odot y,X}=
- \partial_{t_0} + (1/2) (x+y)\partial_{t_1} - xy \partial_{t_2}.
\end{equation*}
This does not preserve the CR structure and hence only defines a normal
contact metric, not a sasakian metric. To compute this, we need to find the
horizontal lift of $g_0 = q(x,y) g_+ /(x-y)$.  For this we observe that
$\ip{\d\ang,\{q,\by\otimes \by\}}$ agrees with $q(\bx,\by)
\ip{\d\ang,\by\otimes \by}/\kappa(\bx,\by)$ on $\cH$ (i.e., modulo $\eta$) and
the latter vanishes on $K$. Similarly, we replace $\ip{\d\ang,\{q,\bx\otimes
  \bx\}}$ by $q(\bx,\by) \ip{\d\ang,\bx\otimes
  \bx}/\kappa(\bx,\by)$. Introducing affine coordinates, we conclude that the
contact metric is
\begin{multline*}
\frac{\d x^2}{A(x)} + \frac{\d y^2}{B(y)}
+ A(x)\Bigl(\frac{y^2 dt_0 + 2y dt_1 + dt_2}{(x-y)^2}\Bigr)^2
+ B(y)\Bigl(\frac{x^2 dt_0 + 2x dt_1 + dt_2}{(x-y)^2}\Bigr)^2\\
+ \Bigl(\frac{x y dt_0 + (x+y) dt_1 +dt_2}{x-y} \Bigr)^2,
\end{multline*}
which is manifestly independent of $q$. Consequently this CR structure has a
family of compatible sasakian structures $\eta_q$ (with Reeb vector fields
$K_q$) for $q\in S^2W^*$. If $A(z)$ and $B(z)$ are quartics such that
$A(z)+B(z)=q_1(z)q_2(z)$ for orthogonal quadratic forms $q_1$ and $q_2$, then
both sasakian structures ($q=q_1$ and $q=q_2$) will be extremal.

\end{document}